\font\tenmsb=msbm10 \font\sevenmsb=msbm7 \font\fivemsb=msbm5
\let\amstexloaded@\relax\fi
\def\spaces@{\space\space\space\space\space}
\def\spaces@@{\spaces@\spaces@\spaces@\spaces@\spaces@}
\def\space@.{\futurelet\space@\relax} \space@.
\def\relaxnext@{\let\next\relax}
\def\accentfam@{7}
\def\tr{\triangle}
\def\a{\alpha}
\def\s{{\sigma}}
\def\Bn{{B_N}}
\def\t1{{\tau}}
\def\td{{\tau_1}}
\def\del{{\delta}}
\def\b{\beta}
\def\teta{{\vartheta}}
\def\tet1{{\theta}}
\def\noaccents@{\def\accentfam@{0}}
\def\mathcal{\relaxnext@\ifmmode\let\next\mathcal@\else
\def\next{\Err@{Use
\string\mathcal\space only in math mode}}\fi\next}
\def\mathcal@#1{{\mathcal@@{#1}}} \def\mathcal@@#1{\noaccents@\fam\tw@#1}
\def\frec#1{{\stackrel{#1}\rightarrow}}
\def\1{{\frac12}}
    \def\eqref #1{{(\ref{#1})}}
\def\Bbb{\relaxnext@\ifmmode\let\next\Bbb@\else
\def\next{\Err@{Use \string\Bbb\space only in math mode}}\fi\next}
\def\Bbb@#1{{\Bbb@@{#1}}}
\def\Bbb@@#1{\noaccents@\fam\msbfam#1}
\def\N{{\mathbb N}} \def\Z{{\mathbb Z}} \def\ome{{\omega}} \def\Ome{{\Omega}} \def\ii{{\rm i}}
\def\R{{\mathbb R}} \def\T{{\mathbb T}}  \def\Ra{{\mathcal R}}
\def\C{{\mathbb C}} 
\def\D{{\mathcal D}}  \def\A{{\mathcal A}}
\newtheorem{Theorem}{Theorem}
\newtheorem{Lemma}{Lemma}[section]
\newtheorem{Proposition}{Proposition}
\newtheorem{Remark}{Remark}[section]
\newtheorem{remark}{Remark}[section]
\newtheorem{Example}{Example}[section]
\newtheorem{Definition}{Definition}[section]
\newcommand{\sss}{\smallskip}
\newcommand{\bs}{\bigskip}
\newcommand{\la}{\langle } \newcommand{\ra}{\rangle }
\newcommand{\beq}{\begin{equation} } \newcommand{\eeq}{\end{equation} }
\newcommand{\n}{\mathfrak n}
\def\eps{{\varepsilon}}
\title{Quasi-T\"oplitz Functions in  KAM Theorem \thanks{Supported by the European Research Council under
FP7, project "Hamiltonian PDEs}}
\author{Michela Procesi\thanks{Universit\`a di Roma La Sapienza 00185 Roma Italy, mprocesi@mat.uniroma1.it}
        \and Xindong  Xu \thanks{Universit\`a degli Studi di Napoli "Federico II" 80126, Italy and   Department of Mathematics, Southeast University,  Nanjing 210096 China,  xindong.xu@seu.edu.cn\,,  xuxdnju@gmail.com}}
\begin{document}


\maketitle
\begin{abstract} We define and describe the class of  Quasi-T\"oplitz functions. We then prove an abstract KAM theorem where the perturbation is in this class. We apply this theorem to a Non-Linear-Schr\"odinger equation on the torus $\T^d$, thus proving  existence and stability of quasi--periodic solutions.
\end{abstract}
\begin{keywords}
Schr\"{o}dinger equation, KAM Theory, Quasi T\"oplitz functions
\end{keywords}

\begin{AMS}
37K55;35Q55;37J40;70H08;70K43;
\end{AMS}

\section{Introduction }

 In this paper,  we study  a model NLS with external parameters on the torus $\T^d$ and  prove existence and stability of quasi--periodic solutions. In order to do this we introduce a new class of functions, which we denote as quasi-T\"oplitz.  We focus on the equation
\begin{equation}\label{equ}
 iu_t-\tr u+{\mathbb M}_\xi
u+f(|u|^2)u=0,\qquad x\in \T^d,\ t\in \R,
\end{equation}
where $f(y)$ is a real analytic function with $f(0)=0$, while  ${\mathbb M}_\xi$ is a Fourier multiplier,
 namely a linear operator which commutes with the Laplacian and  whose role is to introduce $b$
  parameters in order to guarantee that equation \eqref{equ} linearized at $u=0$ admits a quasi--periodic solution with $b$ frequencies.
More precisely  we choose a
finite set   $\{\n^{(1)}=0,\n^{(2)},\cdots, \n^{(b)}\}$ with
$\n^{(i)}\in \Z^d$ and define ${\mathbb M}_\xi$ so that the eigenvalues of the
operator $\tr+{\mathbb M}_\xi$  are
\begin{equation}
\left\{\begin{array}{cclc}
\omega_j&=&|\n^{(j)}|^2+\xi_{j},&1\leq j\leq b\\
\Omega_n&=&|n|^2,&n\notin \{\n^{(1)},\cdots, \n^{(b)}\}
\end{array}\right.
\end{equation}
Equation \eqref{equ} is a well known model for the  natural NLS, in
which  the Fourier multiplier is substituted  by a multiplicative
potential $V$. Existence and stability of quasi--periodic solutions of \eqref{equ}
via a KAM algorithm was proved in \cite{EK} for the more general
case where $f(y)$ is substituted with $f(y,x)$, $x\in \T^d$. With
respect to that paper we use a different approach to prove measure estimates, based essentially on two ingredients:  the fact that  the equation has
the total momentum $M=\int_{\T^d}\bar u \nabla u $ as an integral of
motion, and the use of the properties of the quasi-T\"oplitz functions.
 These two ideas induce some significant simplifications which we think
are interesting, in particular the conservation of momentum   enables us to prove a stronger result, namely our solutions are analytic while in \cite{EK} only Gevrey class is proven.
Our dynamical result for the NLS \eqref{equ} is
\begin{Theorem}\label{quasiperiodic solution}
There exists a positive-measure Cantor set $\mathcal C$ such that
for any $\xi=(\xi_1,\cdots,\xi_b)\in\mathcal C$, the nonlinear
Schr\"{o}dinger equation $(\ref{equ})$ admits  small amplitude  analytic
 quasi-periodic solutions. The solutions are linearly stable and  we give a reducible normal form close to them.
\end{Theorem}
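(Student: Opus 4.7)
The plan is to recast equation \eqref{equ} as an infinite-dimensional Hamiltonian system near the elliptic equilibrium $u=0$, reduce it to a partial Birkhoff normal form around a suitable finite-dimensional torus, and then apply the abstract KAM theorem for Quasi-T\"oplitz perturbations that forms the analytical core of the paper. Expanding $u(x,t)=\sum_{n\in\Z^d}q_n(t)\phi_n(x)$ turns \eqref{equ} into a Hamiltonian system with
\[
H=\sum_n(|n|^2+\xi_n)|q_n|^2+\int_{\T^d}F(|u|^2)\,dx,\qquad F'=f,
\]
where $\xi_n=\xi_j$ for $n=\n^{(j)}$ and $\xi_n=0$ otherwise. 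Split the indices into tangential $\Ta=\{\n^{(1)},\dots,\n^{(b)}\}$ and normal $\Z^d\setminus\Ta$, introduce action-angle variables $q_{\n^{(j)}}=\sqrt{I_j+y_j}\,e^{-{\rm i}x_j}$ on the tangential sites (around some admissible amplitudes $I_j>0$) and keep Cartesian coordinates $(z_n,\bar z_n)$ elsewhere. After expansion one obtains
\[
H=\la\omega(\xi),y\ra+\sum_{n\notin\Ta}\Omega_n|z_n|^2+P(x,y,z,\bar z;\xi),
\]
with $\omega,\Omega_n$ as prescribed by the linear spectrum (shifts absorbed in the parameters) and $P$ small near the reference torus $\{y=0,\ z=\bar z=0\}$.

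The second step is to show that $P$ lies in the Quasi-T\"oplitz class introduced earlier. Real-analyticity of $f$ and the decay of the Fourier basis give the required analyticity and exponential decay of the Taylor coefficients of $P$; the only genuinely infinite-dimensional input is therefore the T\"oplitz structure. The decisive observation is that $M=\int_{\T^d}\bar u\,\nabla u$ is an integral of motion of \eqref{equ}, which forces every monomial appearing in $P$ to carry zero total momentum. This conservation is inherited at every step of the Newton scheme, supplies the block structure exploited by the T\"oplitz bounds, and simultaneously eliminates most of the potentially resonant second-order Melnikov conditions. Feeding $P$ into the abstract theorem then yields a Cantor set $\mathcal C$ of positive Lebesgue measure and a symplectic conjugation bringing $H$ into the invariant normal form
\[
H_\infty=\la\omega^\infty(\xi),y\ra+\sum_{n\notin\Ta}\Omega^\infty_n(\xi)|z_n|^2+O\bigl(|y|^2+|y||z|^2+|z|^3\bigr),
\]
with real $\omega^\infty$ and $\Omega^\infty_n$ close to the unperturbed frequencies. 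The torus $\{y=0,\ z=0\}$ is then invariant, carries a Kronecker flow at frequency $\omega^\infty(\xi)$, and is linearly stable because the quadratic normal part in $z$ is diagonal with real coefficients. Pulling back through the KAM transformation produces the quasi-periodic solution of \eqref{equ} asserted by the theorem.

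The main obstacle will be the two-sided bookkeeping imposed by the iteration: one has to show that each solution of the homological equation remains Quasi-T\"oplitz with estimates sharp enough to survive the super-convergent scheme, and at the same time control the measure of parameters excluded by the infinitely many second Melnikov conditions $|\ko\pm\Omega_n\pm\Omega_m|>\gamma/\cdots$ at every step. It is precisely here that the Quasi-T\"oplitz framework is designed to pay off, and where the momentum first integral $M$ visibly streamlines the argument compared with the potential-based approach of \cite{EK}.
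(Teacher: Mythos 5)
Your proposal matches the paper's intended route: recast \eqref{equ} as a Hamiltonian system by Fourier expansion, pass to action--angle variables on the tangential sites, observe that $P$ inherits momentum conservation and is T\"oplitz (hence quasi-T\"oplitz with trivial remainder), verify $(A1)$--$(A4)$ (non-degeneracy is trivial since $\omega_j=|\n^{(j)}|^2+\xi_j$, and $\tilde\Omega_n\equiv 0$), and then invoke Theorem~\ref{KAM}. The paper does not write out this deduction explicitly, but your sketch fills it in correctly and in the same spirit; the only cosmetic deviation is that you expand around nonzero amplitudes $I_j>0$ while the paper writes $q_{\n^{(j)}}=\sqrt{I_j}e^{i\theta_j}$ directly, which amounts to the same small-amplitude regime once one works on $D(r,s)$.
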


This is obtained by proving that the NLS Hamiltonian fits the hypotheses of an abstract KAM theorem, see Theorem \ref{KAM}.

 Before describing our results and techniques more in detail, let us
make a very brief excursus on the literature on quasi--periodic solutions
for PDEs on $\T^d$ and on the general strategy of a KAM algorithm.

The existence of quasi--periodic solutions for equation \eqref{equ}
 (as well as for the non--linear wave equation) was first proved  by
  Bourgain, see \cite{B3} and \cite{B4}, by applying a combination of
   Lyapunov-Schmidt reduction and Nash--Moser generalized implicit
   function theorem in order to solve the small divisor problem.
   This method is very flexible and may be effectively applied in
   various contexts, for instance in the case where $f(y)$ has only
   finite regularity, see \cite{BB1} and \cite{BB2}. As a drawback
   this method only establishes existence of the solutions but does not give  information on the linear  stability.
In order to achieve this stronger result it is natural to extend to \eqref{equ}   the, by now classical, KAM techniques which were developed to study equation \eqref{equ}
 with Dirichlet boundary conditions on the segment $[0,\pi]$.
A fundamental hypothesis in the aforementioned  algorithms is that the eigenvalues $\Omega_n$ are simple,
and this is clearly not satisfied already in the case of equation \eqref{equ} on $\T^1$, where the eigenvalues are double.
We mention that this hypothesis was weakened for the non--linear wave equation by Chierchia and You in \cite{CY}, by only requiring that the eigenvalues have finite and uniformly bounded multiplicity.  Their method however does not extend trivially to the NLS on $\T^1$ and surely may not be applied to the NLS in higher dimension, where the multiplicity of $\Ome_n$ is of order $\Ome_n^{(d-1)/2}$.   The first result on KAM theory on the torus $\T^d$ was given in \cite{GY3} for  the {\em non--local} NLS:
$$   iu_t-\tr u+{\mathbb M}_\xi
u+f(|\Psi_s(u)|^2)\Psi_s(u)=0,\qquad x\in \T^d,\ t\in \R,
$$
where $\Psi_s$ is a linear operator, diagonal in the Fourier basis and such that $\Psi_s(e^{\ii\langle n,x\rangle})= |n|^{-2s} e^{\ii\langle n,x\rangle} $ for some $s>0$. The key points of that paper are: 1. the  use of the conservation of the total momentum to avoid the problems arising from the multiplicity of the $\Ome_n$ and  2. the fact that the presence of the non--local operator $\Psi_s$ simplifies the proof of the {\em Melnikov non--resonance conditions} throughout the KAM algorithm.
As we mentioned before the more complicated problem of a KAM algorithm for the local NLS without momentum conservation was solved by Eliasson and Kuksin in \cite{EK}.

Let us briefly describe the general strategy in the KAM algorithm for equation \eqref{equ}.

We expand the solution in Fourier series as $u=\sum\limits_{n\in \Z^d} u_n\phi_n(x)$, here $\phi_n(x)=\sqrt{1\over {(2\pi)^d}}e^{i\langle
n,x\rangle}$ with $ n\in\Z^d $ is the standard Fourier basis.Then we introduce standard action-angle coordinates for the modes $\mathtt n_j$  by setting
$u_{\n_j}=\sqrt{I^{(0)}_j+I_j}e^{i\teta_j},j=1,\cdots,b$,  where the $I^{(0)}_j$ are arbitrary sufficiently small numbers.  Finally we set  $u_n=z_{n}= z_n^+$,  $\bar u_n= \bar z_n= z_n^-$ for all $n\neq \{\n^{(1)},\cdots, \n^{(b)}\}$.
We get
\begin{equation}\label{equ-h}H=\sum\limits_{1\leq j\leq b}\omega_j(\xi) I_j+\sum\limits_{n\in \Z^d_1}\Omega_n|z_n|^2+P(I,\teta,z,\bar z),\quad \Z^d_1:=\Z^d\setminus \{\n_1,\dots,\n_b\}.\end{equation} It is easily seen that $H$ and hence  $P$ preserve the total momentum (see  formula \eqref{mome} below)   moreover $P$ (and $\sum(\Omega_m-|m|^2) z_m\bar z_m$) are
T\"{o}plitz/anti-T\"oplitz functions, namely the Hessian matrix  $\partial_{z_m^\sigma}\partial_{z_n^{\sigma'}} P $ depends  on ${z_m^\sigma},z_n^{\sigma'} $only through $\sigma m+\sigma' n$.

Informally speaking the KAM algorithm consists in constructing a convergent sequence of symplectic transformations $ \Phi_\nu$ such that
\begin{equation}\label{porto} \Phi_\nu\circ  H :=H_\nu= \sum\limits_{1\leq j\leq b}\omega^{(\nu)}_j (\xi)I_j+\sum\limits_{n\in \Z^d_1}\Omega^{(\nu)}_n(\xi)|z_n|^2+P_\nu(\xi,I,\teta,z,\bar z), \end{equation}
where $P_\nu\to 0$ in some appropriate norm.  The symplectic transformation is well defined for all $\xi$ which satisfy the {\em Melnikov non--resonance conditions}:

\begin{equation}\label{melnikov}|\langle \ome^{(\nu)},k\rangle +\Ome^{(\nu)} \cdot l| \geq \gamma K_{\nu}^{-\varrho} \,,\;\end{equation} for all  $ k\in \Z^b, \;l\in \Z^{\Z_1^d}$ such that $(k,l)\neq (0,0)\,,\; |l|\leq 2$ and $|k|< K_\nu$. Here $\varrho,\gamma$ are  appropriate constants.
With these conditions in mind it is clear that a degeneracy
$\Ome^{(\nu)}_n= \Ome^{(\nu)}_m$ poses problems since the left
hand side in \eqref{melnikov} is identically zero for
$k=0,l=e_m-e_n$ ($e_m$ with $m\in \Z_1^d$ is the standard basis
vector).  To avoid this problem we use the fact that all the $H_\nu$
have $M$ as constant of motion. This in turn implies that some of
the Fourier coefficients of $P_\nu$ are identically zero so that the
conditions \eqref{melnikov} need to  be imposed only  on those $k,l$
such that  $\sum_{i=1}^b \n_{i}k_i + \sum_{m\in \Z_1^d} m l_m=0$.
Then, in our example,  $k=0$  automatically implies $n=m$. This is
the key argument used in \cite{GY3}. However, once that one has
proved that the left hand side of \eqref{melnikov} is never
identically zero, one still has to show that the quantitative bounds
of  \eqref{melnikov} may be imposed on some positive measure set of
parameters $\xi$. This is an easy task when $|l|=0,1$  or $l= e_m +
e_n $ but  may pose serious problems in the case $l= e_m-e_n$ where
the non--resonance condition is of the form
\begin{equation}\label{semel}
 |\langle \ome^{(\nu)},k\rangle +\Ome^{(\nu)}_m-\Ome^{(\nu)}_n|\geq \gamma K_{\nu}^{-\varrho} \,, \; \forall k\in \Z^b\,, \;n,m\in \Z_1^d: \; |k|< K_\nu
\end{equation} where $n-m= \sum_{i=1}^b \n_i k_i$.  Indeed in this case for every fixed value of $k$ one should in principle impose infinitely many conditions, since the momentum conservation only fixes $n-m$.    In \cite{GY3}, the presence of $\Psi_s$ implies that  $\Omega^{(\nu)}_m-|m|^2\approx {\varepsilon\over |m|^s}$ so that if $|m|^s > c |k|^\tau$ the variation of $\Omega^{(\nu)}$ is negligible. This implies in turn that one has to impose only finitely many conditions for each $k$.  In the case of equation \eqref{equ} however $s=0$, so that this argument may not be applied. One wishes to impose the non resonance conditions by verifying only a finite number of bounds for each $k$. To do this  one needs some control on  $\Omega^{(\nu)}_m-|m|^2$, for $|m|$ large,  throughout the KAM algorithm. The ideal setting  is when $\Omega^{(\nu)}_m-|m|^2$ is $m$--independent. This holds true for  the first  step of the KAM algorithm due to the fact that $P$
 is a T\"oplitz function. However it is easily seen that already $P_{1}$ is not a
T\"oplitz function and some wider class of functions must be defined.

  In order to control the shift of the normal frequency Eliasson and Kuksin in \cite{EK}  define   a {\em  T\"oplitz-Lipschitz property}, which  they show  is satisfied by the NLS Hamiltonian and  preserved through the KAM
iteration. With this property, they prove the existence of
KAM tori.  As a further difficulty they consider an NLS equation which does
not have $M$ as a constant of motion. This implies that some of the
Melnikov non--resonance conditions \eqref{semel} may not be imposed.
At each step of the KAM algorithm they thus obtain a more complicated normal form.

In order to describe the T\"{o}plitz-Lipschitz property, given an analytic function $A(z,\bar z)$, let
 $A^n_m(\pm)= \partial_{z_m}\partial_{z_n^{\pm }} A$ be its  Hessian matrix.
 For all
$n,m,c\in \Z^d$, one requres that the limit $A^{n}_m (\pm,c):=\lim\limits_{t\rightarrow
\infty} A^{n\mp t c}_{m+tc}(\pm)$ exists and  is attained with speed of order ${1\over t}$. In dimension $d>2$ one also requires similar conditions on the  limits
$\lim\limits_{s\rightarrow
\infty}A^{n\mp s c'}_{m+ s c'} (\pm,c)$   with $c'$ orthogonal to $c$.
  In \cite{GXY} an understanding of this
property in $\T^2$ is given. A key step is to divide the region
$\{|n-m|\leq N\}\subset \Z^d\times \Z^d$ in a finite number of Lipschitz domains.

In our paper we use a similar --but in our opinion more natural-- approach.
We define a class of functions, the {\em quasi--T\"oplitz functions} whose main properties are:
\begin{enumerate}
\item
the Poisson bracket of two quasi-T\"oplitz functions is quasi-T\"oplitz (Proposition \ref{main}),
\item the Hamiltonian flow generated by
a quasi-T\"oplitz function preserves the quasi-T\"oplitz property
 (Proposition \ref{main}),
 \item
the solution of the homological equation with a quasi-T\"oplitz  perturbation
is quasi-T\"oplitz (Proposition \ref{submain}).
\end{enumerate}
   Note that the  T\"{o}plitz-Lipschitz property of \cite{EK} is closed only with respect to Poisson brackets when one of the functions is quadratic, this makes our definitions more flexible.

    In this paper we strongly rely on the conservation of momentum for our definitions, however this condition is not necessary in order to define the quasi-T\"oplitz functions, see for instance
\cite{BBP1}. In the next paragraph we give a brief informal description of our method.

\subsection{Brief description of the strategy}
We start by fixing two diophantine exponents $\tau_0\ll \tau_1$. All our definitions and constructions are based on some parameters $N\gg 1$,  $\frac12 <\tet1,\mu <4$  and $ \tau_0\leq \tau\leq \tau_1/4d$ which are needed in order to ensure that the quasi-T\"oplitz functions are closed with respect to Poisson brackets (with slightly different parameters).

The first step in our construction is an intrinsic (and unique)  description of affine subspaces described by equations with integer coefficients.  We consider the equations $ v_i \cdot x =p_i$,  $i=1,\dots,\ell$ $x,v_i\in \Z^d$, $p_i\in \Z$ describing   the set of integral points $x$ in an affine subspace,  we then denote this set  by $[v_i;p_i]_\ell$ and, by abuse of notation, call it an affine subspace.
Given  $N\gg 1$, an $N$--optimal presentation of an affine subspace of codimension $\ell$ is a (uniquely fixed if it exists) list  $[v_i;p_i]_\ell$ such that the $|v_i|< C_1 N$ and the
$p_i$ are {\em positive, ordered} and {\em as small as possible} (see Definition \ref{N-opt}).

This decomposition holds also for a single point (when $\ell=d$, in this case an $N$--optimal presentation will surely exist). Then we use  the parameters $\frac12 <\tet1,\mu<4$, $\tau_0\leq\tau\leq  \tau_1/4d$ to define the  notion of {\em $\ell$--cut} for a  point  $m$ and of  {\em good points} of an affine subspace with respect to the parameters $(N,\tet1,\mu,\tau)$. Namely,  if $[v_i;p_i]_d$ is the $N$--optimal presentation of $m$, then $m$ has a cut at $\ell$ if $p_\ell <\mu N^\tau$ and $p_{\ell+1}> \theta N^{4d\tau}$. In the same way the $(N,\tet1,\mu,\tau)$--good points of an affine subspace $[v_i;p_i]_\ell$, with $p_\ell< \mu N^\tau$  are those points  of $[v_i;p_i]_\ell$ which have a cut at $\ell$  with   parameters $(N,\tet1,\mu,\tau)$ (see Definition \ref{cut}).

We then define the {\em $(N,\tet1,\mu,\tau)$--bilinear  functions}, i.e.    functions which are bilinear in the {\em high variables} $z_m^\s,z_n^{\s'}$ such that $|m|,|n| > \theta N^\td$ and both $m$ and $n$ have a cut with parameters $(N,\tet1,\mu,\tau)$. These functions may depend on  $I,\teta$ and on the   {\em small  variables} $z_j^\s$ with $|j|<\mu N^3$ in a possibly complicated way (see Definition \ref{taubilinear} for a precise statement).

Finally we define  the {\em piecewise T\"oplitz} functions as those   $(N,\tet1,\mu,\tau)$--bilinear functions which are T\"oplitz when restricted to  the $(N,\tet1,\mu,\tau)$--{\em good points} of any affine subspace (see Definition \ref{pitop} and Remark \ref{pollen}).

We can now define the  $(K,\tet1,\mu)$--quasi--T\"oplitz
functions.  Informally speaking given a function    $f$, for all $N>K, \tau_0 \leq \tau\leq \tau_1/4d$, we project it   on the   $(N,\tet1,\mu,\tau)$--bilinear functions   and  we say that $f$ is quasi-T\"oplitz if all these projections are {\em well approximated} by a piecewise T\"oplitz function.
To be more precise, $\tau$ controls the size of the error function, namely    the $(N,\tet1,\mu,\tau)$--bilinear  part of $f$ is approximated by a piecewise T\"oplitz function with an error of the order $N^{-4d \tau}$,  for all $N\geq K$ (see Formula \eqref{limi} and Definition \ref{topbis})

The role of the parameters $K,\tet1,\mu$ is to ensure  that if $f,g$ are quasi-T\"oplitz with parameters $K,\tet1,\mu$ then $\{f,g\}$ is  quasi-T\"oplitz  for all $\theta'>\theta$ and $ \mu'<\mu$ provided  $K'>K$ is large enough (see Proposition \ref{main}).

We  proceed by induction supposing that we have been able to perform $\nu$ KAM iterative steps and that we have a Hamiltonian of the form \eqref{porto} where $\sum_m (\Omega_m^{(\nu)}-|m|^2) |z_{m}|^2$ is quasi-T\"oplitz  with parameters $(K_\nu,\theta_\nu,\mu_\nu)$  (note that $K_\nu$ is the
ultra--violet  cut-off at step $\nu$).   In order to solve the homological equation (and hence pass to step $\nu+1$ ) we restrict to the subset of $\xi$ for which \eqref{melnikov} holds for all $k,m,n$ (satisfying momentum conservation) for some $\varrho:= \varrho(k,m,n)< 2d\tau_1$.  The main point is to show that this restriction on the parameters only removes a small measure set.

For all natural $N\geq K_\nu $  we introduce a decomposition of $\Z_1^d$ as

 \begin{equation}\label{pustola} \Z_1^d:=A_0
 \cup\Biggl(\bigcup_{\ell=1}^{d-1} A_\ell \Biggl) \cup \; \{|m|\leq 4N^\td\}  \,,
\end{equation}
here $A_0\equiv A_0(N)$ is  $\Z_1^d$ minus a finite number of affine hyperplanes while $A_\ell:= A_\ell(N)$ is the union of a finite number of affine spaces of codimension $\ell$ minus a finite number of affine spaces of codimension $\ell+1$ (see Figure \ref{fig1} for a picture in $d=2$).

 This  decomposition is constructed as follows:

$A_0$ (defined in formula \eqref{nodiv}) is chosen so  that for all $|k|< N, m\in A_0$ the Melnikov denominators \eqref{semel} are {\em not small}.

For all $0<\ell<d$ we  may write

$$\quad A_\ell:= \bigcup\limits_{ v_1,\dots,v_\ell \in \Z_1^d\,, p_1,\dots, p_\ell \in \Z\atop
 |v_i|< C_1N \,, p_i< 4 N^{\tau_1/4d}}  [v_i;p_i]_\ell^g\,,
$$
where  the  $[v_i;p_i]_\ell^g\subset [v_i;p_i]_\ell$ (see Definition \ref{papilla})  are defined in order to ensure the following property:
fix $\tau(p_\ell)$ by setting $N^\tau =\max( 2p_\ell ,N^{\tau_0})$,  we have that all $m\in [v_i;p_i]_\ell^g$ are  $(N,\tet1,\mu,\tau(p_\ell))$--good points for  $[v_i;p_i]_\ell$    for all choices of $\frac12<\tet1,\mu<4$ -- this is the content of Lemma \ref{minko}. Finally the fact that this sets provide a decomposition of $\Z^d_1$ is the content of Proposition \ref{key2}.

 To prove the measure estimates we  use the above decomposition with $N= K_\nu$. Then   the quasi-T\"oplitz  property with $ N= K_\nu$ implies that
 for each $m\in[v_i;p_i]^g_\ell$,
\begin{equation}\label{stima}\Ome_m^{(\nu)}= |m|^2 + {\it \hat\Ome}^{(\nu)}([v_i;p_i]_\ell)+ \bar\Ome_m^{(\nu)} K_\nu^{-4d\t1(p_\ell)},\end{equation} where ${\it \hat\Ome}^{(\nu)}$ is constant on all the points of $ [v_i;p_i]_\ell$ while $\bar\Ome_m^{(\nu)}$ is bounded by $\varepsilon_0$ (see Lemma \ref{diago}).
We stress   that here\footnote{Note that in the definition of quasi-T\"oplitz functions and of cuts, instead, $\t1$ is left as a free parameter with the only restriction $\tau_0\leq \t1\leq \td/4d$.} $\tau=\tau(p_\ell)$ is fixed by   the positive integer $p_\ell$.

Roughly speaking, we  fix $k$,  choose {\bf one point} $m^g$  on each  $[v_i;p_i]_\ell^g$ and impose the Melnikov conditions \eqref{semel} with $\varrho= 2 d\tau(p_\ell) $, $ \gamma \rightsquigarrow 2\gamma$  and $m=m^g$ (see Definition \ref{oppio} {\it iv)} for the precise formulation).  This condition and \eqref{stima} ensure the second Melnikov condition for all $m \in [v_i;p_i]_\ell^g$ with $\varrho=2d\tau(p_\ell)$ (see Lemma \ref{key}).
This shows that the infinitely many conditions \eqref{semel} can be imposed by only requiring a finite subset of them.

In order to check the measure estimates we remark that to impose {\bf one} Melnikov condition (i.e. with fixed $k$, $m\in [v_i;p_i]_\ell^g $ and $\varrho= 2 d\tau(p_\ell)$)  we need to remove a region of parameter sets of order $K^{-2 d\tau(p_\ell)}_\nu $(see Lemma \ref{misu}). Thus we need to estimate the number of affine spaces $[v_i;p_i]_\ell$ with $p_\ell=p$,  using Remark \ref{numero} it follows that this bound is proportional to $K_\nu^{d \t1(p)}= (2p)^d $. This concludes the problem of measure estimates and we exclude a set of $\xi$ of measure $\sum_{p\in \N : p> K_\nu^{\tau_0}}(2p)^{-d}$ (here we are giving only an informal argument,   see Lemma \ref{measure} for the complete proof).
In order to pass to the step $\nu+1$ we need $F_\nu$ (the solution of the Homological equation) to be quasi-T\"oplitz: this requires a further restriction of the parameter set (see Definition \ref{oppio}{\it iv)}, Remark  \ref{quiri} and Proposition \ref{submain}).

 Recalling that quasi-T\"oplitz functions are closed with respect to Poisson brackets we conclude   that the new Hamiltonian is still quasi--T\"oplitz  for some new parameters $\theta_{\nu+1},\mu_{\nu+1}$ for all $N\geq K_{\nu+1}$.

\section{Relevant notations and definitions}

\subsection{ Function spaces and norms}
We start by introducing some notations. We fix $b$ vectors $\{\n^{(1)},\cdots, \n^{(b)}\}$ in
$\Z^d$  called  the {\em tangential sites}. We denote by $\Z^d_1:=\Z^d\setminus
\{\n^{(1)},\cdots, \n^{(b)}\}$ the complement, called the {\em normal sites}. Let $z=(\cdots, z_n,\cdots)_{n\in \Z^d_1}$,
and its complex conjugate $\bar z=(\cdots,\bar z_n,\cdots)_{n\in
\Z^d_1}$. We introduce the weighted norm
$$\|z\|_{\rho} =\sum_{{n\in\Z^d_1}}|z_n|e^{|n|\rho}|n|^{d+1},$$
where $|n|=\sqrt{n_1^2+n_2^2+\cdots+n_d^2}$,
$n=(n_1,n_2,\cdots,n_d)$ and $\rho
> 0$. We denote by $\ell_\rho$ the Hilbert space of lists $\{w_j=(z_j,\bar z_j)\}_{j\in \Z_1^d}$ with $\|z\|_\rho<\infty$.

 We consider the real torus  $\T^b:=\R^b/\Z^b$ naturally contained in the space   $\C^b/\Z^b\times\ell_\rho $  as the subset where $I=z=\bar z=0$.
 We then consider in this space the   neighborhood of $\T^b :$
$$D(r,s):=\{(I,\teta,z,\bar z):|{\rm Im}\, \teta|<s,|I|<r^2,{\|z\|}_{\rho}<r,
{\|\bar z\|}_{\rho}<r\},$$
 where $|\cdot|$ denotes the sup-norm of complex vectors. Denote by $\mathcal O$ an open and bounded parameter
set in $\R^b$ and let $D= \max_{\xi,\eta\in \mathcal O}|\xi-\eta|$.

 We consider   functions $F(I,\teta,z;\xi): D(r,s)\times {\mathcal O}\to \C$ analytic in $I,\teta,z$ and of class $C_W^1$ in $\xi$.  We expand in  Taylor--Fourier series as:
\beq\label{2.2}
 F(\teta, I, z, \bar z;\xi )=\sum_{l,k,\alpha,\beta }
F_{lk \alpha\beta }(\xi)I^le^{{\rm i}
 \la k,\teta\ra}z^{\alpha} \bar z^{\beta },\eeq
 where the coefficients $F_{lk \alpha\beta }(\xi)$ are of class $C^1_W$ (in the sense of Whitney), the vectors $\alpha\equiv (\cdots,\alpha_n,\cdots)_{n\in\Z_1^d}$, $\beta \equiv
(\cdots, \beta _n, \cdots)_{n\in\Z_1^d}$  have finitely many non-zero  components $\alpha_n,\beta_n\in \N$, $z^{\alpha} \bar z^{\beta }$ denotes $\prod_n
z_n^{\alpha_n}\bar z_n^{\beta_n}$ and finally $\la\cdot,\cdot\ra$ is the standard inner product in $\C^b$.

We use the following   weighted norm for $F$:
 \begin{equation}\label{2.3}
  \|F\|_{r,s}=\|F\|_{ D(r,s)  ,\mathcal O}\equiv \sup_{{\|z\|_{\rho}<r}\atop{\|\bar
z\|_{\rho}<r}}\sum_{\alpha,\beta,k,l } |F_{kl\alpha \beta
}|_{\mathcal O}\ r^{2|l|}e^{|k|s}\,|z^{\alpha}| |\bar z^{\beta }|,
\end{equation}

 \beq\label{2.4}
 |F_{kl\alpha \beta }|_{\mathcal O}\equiv \sup_{\xi\in \mathcal O}
 (|{F_{kl\alpha \beta }}|+
 |\frac{\partial F_{kl\alpha \beta }}{\partial \xi}|).\eeq
 \noindent (the derivatives with respect to $\xi$ are in the sense of Whitney).
 To an analytic function  $F$, we associate  a Hamiltonian vector
field with coordinates
$$ X_F=(F_I, -F_\teta, \{{\rm
i}F_{z_n}\}_{n\in \Z_1^d}, \{-{\rm i}F_{\bar z_n}\}_{n\in\Z_1^d}).$$

 Consider a vector  function $G: D(
r,s)\times {\mathcal O}\to \ell_\rho$ with $$G= \sum_{kl\alpha \beta } G_{kl\alpha \beta } (\xi)I^le^{{\rm i}
 \la k,\teta\ra}z^{\alpha} \bar z^{\beta}, $$  where $G_{kl\alpha \beta }=(\cdots,G_{kl\alpha \beta }^{(i)},\cdots)_{i\in \Z_1^d}$. Its norm is similarly defined
as $$\|G\|_{D( r,s), \mathcal O}=\sup_{{\|z\|_{\rho}<r}\atop{\|\bar
z\|_{\rho}<r}} \| \mathcal MG\|_{\rho}$$ where $$\mathcal MG=(\cdots,\mathcal MG^{(i)},\cdots)_{i\in \Z_1^d},\qquad \mathcal MG^{(i)}= \sum_{\alpha,\beta,k,l
} |G^{(i)}_{kl\alpha \beta }|_{\mathcal O}\
r^{2|l|}e^{|k|s}\,z^{\alpha}\bar z^{\beta }$$ is a majorant of
$G^{(i)}$. We say that  an analytic function $F$ is regular if the
function $(z,\bar z)\to \mathcal M X_F$ is analytic from $B_r\to
 \ell_\rho$. Its
 weighted  norm is defined by\footnote{ The norm  $\|\cdot\|_{D_{\rho}(r,s), \mathcal O}$ for scalar functions is defined in
(\ref{2.3}). }  \begin{eqnarray}\label{norvec}
\|X_F\|_{r,s}=\|X_F\|_{\!{}_{ D(r,s), \mathcal O}}&\equiv&
\sum_{j=1}^b\|F_{I_j}\|_{\!{}_{ D(r,s), \mathcal O}}+ \frac
1{r^2}\sum_{j=1}^b \|F_{\teta_j}\|_{\!{}_{ D(r,s), \mathcal O}}\nonumber\\
&+& \frac 1r(\|\partial_{z}F\|_{ D(r,s),\mathcal O}+\|\partial_{\bar z}F\|_{ D(r,s),
\mathcal O}). \label{2.6}
\end{eqnarray}

 A function $F$ is said to satisfy momentum conservation if $\{F,M\}=0$ with $M= \sum_{i=1}^b \n^{(i)} I_i+\sum_{m\in \Z_1^d} j |z_m|^2$. This implies that
 \begin{equation}\label{mome}
F_{k,l,\alpha,\beta}=0\,,\qquad {\rm  if }\; \;\pi(k,\alpha,\beta):= \sum_{i=1}^b  \n^{(i)} k_i +\sum_{m\in \Z_1^d} m (\alpha_m-\beta_m) \neq 0.
\end{equation}
By Jacobi's identity momentum conservation is preserved by Poisson bracket.
\begin{remark}
 It will be useful to envision the conservation of momentum at fixed $k$ as a relation between $\a,\b$; to make this more evident we write
 \begin{equation}\label{patacc}
 \pi(k,\alpha,\beta)=0 \,,\quad {\rm as}\quad  -\sum_{m\in \Z_1^d} m (\alpha_m-\beta_m)= \sum_{i=1}^b  \n^{(i)} k_i:= \pi(k)
\end{equation}
  \end{remark}
\begin{definition}
We denote by  ${\mathcal A}_{r,s}$ the space of regular analytic
functions in $D(r,s)$ and $C^1_W$ in $\mathcal O$ which satisfy
momentum conservation \eqref{mome} and with finite semi-norm \eqref{norvec}
\end{definition}
If $\mathcal S$ is a set of monomials in $I_j,e^{\ii\teta_j},z_m,\bar z_n$, we define the projection operator $\Pi_{\mathcal S}$  which to a given analytic function $F$ associates  the part of the series only relative to the monomials in $\mathcal S$.

We have following useful result
\begin{Lemma}\label{cauchy} $i)$ The majorant norm is closed under projections, namely  $\|\Pi_{\mathcal S}f\|_{r,s}\leq \|f\|_{r,s}$,  and $\|X_{\Pi_{\mathcal S}f}\|_{r,s}\leq \|X_f\|_{r,s}$ .\\
$ii)$ ${\mathcal A}_{r,s}$ is closed under Poisson brackets, with respect to the symplectic
form $ dI\wedge d\teta+ i d z\wedge d\bar z$, moreover by Cauchy
estimates, if we denote $\delta= (\frac{r'}r)^2 \min( s-s',1-\frac rr')$,
$$ \|[X_f,X_g]\|_{ r',s'}\leq  2^{2d+1}\del^{-1} \|X_f\|_{ r,s}\|X_g\|_{r,s}\,,$$
$$ \|X_{\{f,g\}}\|_{r',s'}\leq  2^{2d+1} \delta^{-1}\|X_f\|_{ r,s}\|X_g\|_{r,s} \,,\qquad$$
\end{Lemma}
\begin{proof}Item {\it i)} is obvious. Item   {\it ii)}  is proved in \cite{BBP}, respectively Lemmata  2.15 and 2.16. In  \cite{BBP} the interested reader can find an analysis of the properties of the majorant norm. Note that in  \cite{BBP} there is the restriction $r/2<r'<r$ (same for $s$) hence the term $(\frac{r}{r'})^2$ is substituted by $4$.
\end{proof}
\section{Affine subspaces}

An affine space $A$ of codimension $\ell$ in $\R^d$  can be defined
by a list of $\ell$ equations $A:=\{x\,|\,v_i\cdot x=p_i\}$ where
the $v_i$ are independent row vectors in $\R^d$.  We will write shortly that
$A=[v_i;p_i]_{\ell}$. We will be interested in particular in the
case when $v_i,p_i$ have integer coordinates, i.e.  are {\em integer
vectors} and the vectors $v_i$  lie in a prescribed ball $\Bn$ of radius  some constant $N$.
We set $C_1:= \max_i |\mathfrak n_i|$, and we denote by $$\langle v_i\rangle_{\ell}={\rm
Span}(v_1,\dots,v_\ell;\R)\cap \Z^d\,,\quad \Bn:=\{x\in
\Z^d\setminus \{0\}\,:\; |x| < C_1N\}  ,$$  here $N$ is
any large number.  In particular we  implicitly assume that $\Bn$  contains a basis of $\R^d$. \smallskip

For given $s\in \N$, in the set of vectors $\Z^s$  we can define  the {\em sign lexicographical order} as follows.\begin{definition}
 Given $a=(a_1,\ldots,a_s)$ set $(|a|):=(  |a_1|,\ldots,|a_s|)$ then we set $a\prec b$  if either $(|a|)< (|b|)$ in the lexicographical \footnote{ Recall  that given two partially ordered sets $A$ and $B$, the lexicographical order on the Cartesian product $A \times B$ is defined as
$(a,b) <  (a',b')$ if and only if  either $a < a'$  or $a = a'$ and $b <  b'$.}  order  (in $\N^s$) or if  $(|a|)=(|b|)$ and $a>b$ in the lexicographical order in $\Z^s$.
\end{definition}
 For instance in $\Z^2$, $(\pm 1,\pm 5)\prec (\pm 2, \pm 4)$ since $(1,5)< (2,4)$; on the other hand we have $(1,4)\prec (1,-4) \prec (-1,4) \prec (-1,-4)$. This is due to the fact that these last vectors have the same components apart from the sign and $(1,4)> (1,-4)>(-1,4) > (-1,-4)$ in the lexicographic ordering of $\Z^2$.
  \begin{Lemma}
Every non empty set of elements in $L\subset \Z^s$ has a unique minimum.
\end{Lemma}
 \begin{proof}
 We first consider the list of vectors $|L|\subset \N^s$ consisting of the vectors $(|a|)$ with $a\in L$. This list has a minimum with respect to the lexicographic ordering of $\N^s$.  Naturally there may more than one vector, say  $a\neq b\in L$ with   $(|a|)=(|b|)$,  which attain the minimum of $|L|$.  This vectors are at most $2^s$ and among them we choose the unique maximum in the lexicographical order in $\Z^s$.
\end{proof}

 Consider a fixed but  large enough $N$.
  \begin{definition}
We set  $\mathcal H_{N }$ the set of all   affine spaces $A$ which can be presented as $A=[v_i;p_i]_\ell$  for some $0<\ell\leq d$ so that that $v_i \in \Bn$.
\end{definition}We display as  $(p_1,\ldots,p_\ell;v_1,\ldots,v_\ell)$  a given presentation, so that it is a vector in $\Z^{\ell(d+1)}$. Then we can say that $[v_i;p_i]_\ell \prec [w_i;q_i]_\ell$ if  $(p_1,\ldots,p_\ell;v_1,\ldots,v_\ell)\prec (q_1,\ldots,q_\ell;w_1,\ldots,w_\ell)$.
\begin{definition}\label{N-opt}
 The $N$--optimal presentation $[l_i;q_i]_\ell$ of $A\in \mathcal H_{N }$ is the minimum in the sign lexicographical order of the presentations of $A$ which satisfy the  bound $v_i\in B_N$.

 Given an affine subspace $A:=\{x\,\vert v_i\cdot x =p_i\,,\  i=1,\dots,\ell\}$ by the notation $A\frec{N} [v_i;p_i]_\ell$ we mean that the given presentation is $N$ optimal.
 \end{definition}
\begin{remark}\label{lordi} i) Note that each point $m=(m_1,\ldots,m_d)\in \Z^d_1$ has a $N$--optimal presentation (this presentation is usually not the naive one $[e_i,m_i]_d$ where the $e_i$ form the standard  basis of $\Z^d$).

ii) We may use the ordering given by  $N$ optimal presentations of points  in order to define a new lexicographic order on $\Z^d$ which we shall denote by $a\prec_N b$ or $a\prec b$ when $N$ is understood.
\end{remark}
\begin{Example}\label{ex1}{\rm  We now give an example of the $N$--optimal presentation of a point and of an affine subspace. One may easily verify that for any affine subspace $A$ there exists $\bar N(A)$ such that for all $N\geq \bar N(A)$ the $N$--optimal presentation is $N$ independent.

Let us start with the case $m_0= (-11,15,3,27)\in \Z^4$.  We have that $\forall  N> C_1^{-1} \sqrt{82}$ (recall that $C_1= \max_i|\mathtt n_i|$)
$$ m_0 \frec{N}[0,0,0,1; (0,0,9,-1), (0,1,4,-1),(3,0,2,1), (1,0,-5,1)]\,.$$
In general given any point $m_0$ we will always find $\bar N(m_0)$ such that for all $N> \bar N(m_0)$ the $N$--optimal presentation is fixed say $[p_i^{(0)};v_i^{(0)}]_d$ and $p_i^{(0)}=0$ for $i=1,\dots,d-1$ while $p^{(0)}_d=$ mcd $(m^{(0)}_1,\dots,m_d^{(0)})$.

Let us now study some affine subspaces.

If $d=2$ consider  the line $A:=\{m\in \Z^2:\; m= m_0+ t c\,,\; t\in \R\}\,,$ with $m_0$  orthogonal to $c$ (suppose also that the components of $m_0$ are coprime). Then
$A\frec{N}[|m_0|^2; m^{(0)}]_1$ provided that $N\geq C_1^{-1}|m_0|$.

 If $d=4$ and  $ A:= \{m\in \Z^4:\, m=(-11,15,3,27)+ (1,0,0,0)t \,,\quad t\in \R\} $ we have that
 $$ A\frec{N} [0,0, 3; (0,0,9,-1), (0,1,4,-1),(0,0,1,0))]_3\,,\quad \forall N> C_1^{-1} \sqrt{82}\,.$$

 $${\rm If}\;  B:= \{m\in \Z^4:\, m=(-11,15,3,27)+ (1,0,0,0)t +(0,1,0,0)s \,,\quad t,s\in \R\} $$ we have that
 $$B\frec{N}[0,3;  (0,0,9,-1), (0,0,1,0)]_2\quad \forall N> C_1^{-1} \sqrt{82}$$
%
%
 }

\end{Example}
\begin{Lemma}
i) If the presentation $A=[v_i;p_i]_\ell$ is $N$--optimal,  we have
\begin{equation}\label{va2}
0\leq  p_1 \leq p_2 \leq\ldots\leq  p_\ell   \end{equation}

 ii) For
all $j<\ell$ and for which $v\in \langle v_1,\dots,v_\ell\rangle\cap \Bn\setminus\langle
v_1,\dots,v_j\rangle$, one has:
\begin{equation}\label{va3}
|(v,r)|\geq  p_{j+1}\,,\quad  \forall r\in A\end{equation}

iii) Given $j<\ell$ set $A_j:=\{x\,|\,v_i\cdot x=p_i,\ i\leq j\},$ then the presentation $A_j=[v_i,p_i]_j$ is $N$--optimal.

iv) Finally   $-A$  has a $N$--optimal presentation $-A =[v'_i,p_i]_\ell $ with the same constants $p_i$ and   $(|v'_i|)=(|v_i|)$.
\end{Lemma}
 \begin{proof}
i) If $p_i<0$  we can change the presentation changing $p_i$ into $-p_i$ and $v_i$ into $-v_i$. By definition this is a lower  presentation lexicographically, we obtain a contradiction.  Suppose now that \eqref{va2} is false -say for instance that $p_1> p_2\geq 0$-  then by definition $\{p_2,p_1,\dots p_\ell; v_2,v_1,\dots,v_\ell \}$ is a presentation of $A$ and it is lexicographically lower than $\{p_1,p_2,\dots p_\ell; v_1,v_2,\dots,v_\ell \}$.

ii) Take $v\in \langle v_1,\dots,v_\ell\rangle\cap \Bn\setminus\langle v_1,\dots,v_j\rangle$ and any $r\in A$. We note   that $(v,r)$ is constant on $A$.
There exists an $h>j$ such that if we substitute  $v_h,\, h>j,$ with $v$ we obtain a
new presentation. Again we deduce by minimality in the
lexicographical order, that $|(v,r)|\geq  p_h  \geq  p_{j+1}  $.
\smallskip

iii) Any presentation $A_j=[w_i,q_i]_j$ can be completed to a presentation $[w_i,q_i]_\ell$ of  $A$  so  if $[q_1,\ldots,q_j,w_1,\ldots,w_j]\prec [p_1,\ldots,p_j;v_1,\ldots,v_j]$ we also have   $[q_1,\ldots,q_\ell;w_1,\ldots,w_\ell]\prec [p_1,\ldots,p_\ell;v_1,\ldots,v_\ell]$ by the definition of lexicographical order, a contradiction.

 iv)
 As for the last statement it is enough to observe that there is a 1--1 correspondence between presentations  $A=[w_j,q_j]$ of $A$ and $-A$ with the constants $q_i\geq 0$, if $A=[w_j,q_j]$  we have $-A=[ -w_j ,q_j]$. The absolute value vectors of the two presentations are the same, the  statement follows.
 \end{proof}

\begin{remark}\label{numero}
For fixed $N$, $\ell$, $p$  the number of  affine spaces in $\mathcal H_{N }$ of codimension $\ell$ and such that $ p_\ell \leq p$ is bounded by
$(2C_1N)^{\ell d} (2p)^{\ell}$.
\end{remark}
\subsection{ Parameters and cuts}  We shall need several auxiliary parameters in the course of our proof.
 We start by fixing some numbers
    \begin{equation}
\label{itau}\tau_0> \max( d+ b,12),\quad \td:=(4d)^{d+1}(\tau_0+1)\,,
\end{equation}
$$
c\leq \frac12\,,\; C\geq 4 \,,\; N_0 \geq  d! C_1^{d} C c^{-1}.
$$
In what follows $N$ will always denote some large number, in particular $N>N_0$, for the purpose of this paper we may fix $c=\frac12$ and $C=4$, however we give the definitions in the more general setting so that they are more flexible. \smallskip

We assume that $N$ has been fixed.
  Given a point $m$ we write $m \frec{N}[v_i;p_i]$ for its optimal presentation  dropping
the index $\ell$ which for a point is always $\ell=d$.
 Set by
convention $p_{0}=0$ and $p_{d+1}=\infty$.

We  then give a definition involving the parameters $ \tet1,\mu,\tau$ which we call {\em allowable} if
$$\tau_0\leq \tau\leq\tau_1/(4d)=  ( 4 d)^{ d }(\tau_0+1),\quad  c< \tet1,\mu<C.$$

We need to
analyze certain {\em cuts}, for the values $p_i$ associated to an optimal presentation of a point.  This will be an index $\ell$  where the values of the $p_i$ jump according to the following:

\begin{definition}\label{cut}
The point $m \frec{N}[v_i;p_i]$ has a {\em cut}  $\ell\in\{0,1,\dots,d\}$  with the  parameters  $(N, \tet1,\mu,\tau)$,   if $\ell$ is such that $  p_\ell  < \mu
N^{\tau}$, $ p_{\ell+1 }  > \theta N^{4 d \tau}$ (recall that $p_0=0,p_{d+1}=\infty$). \smallskip

The space $A:=\{x\,|\, v_i\cdot  x =p_i,\ i=1,\ldots,\ell\}$  is denoted by $[v_i;p_i]_\ell$  and called the {\em affine space associated to the cut} of $m$.

In turn  for every affine subspace $A\frec{N} [v_i;p_i]_\ell$ with $p_\ell < \mu N^\tau$, the set of points $m\in A$ with $|m|> \theta K^{\tau_1}$ which have $\ell$ as a cut with the  parameters $ (N,\tet1,\mu,\tau)$   are called  the $ (N,\tet1,\mu,\tau)$--good points of $A$.
\end{definition}

Notice that  $\theta N^{4 d \tau}>\mu
N^{\tau}$ (since $N^{(4d-1)\tau}\geq N^{(4d-1)\tau_0}>Cc^{-1}>\theta\mu^{-1}$), so  for any given $m\in \Z^d_1$   there is at  most {\bf one} choice of $\ell$ such that $m$ has a $\ell$ cut with parameters $(N,\tet1,\mu,\tau)$.
Note moreover that the affine subspace associated to a $ (N,\tet1,\mu,\tau)$--good point of $A$ is $A$.
\begin{remark}
The purpose of defining a cut $\ell$ is to separate the
numbers $p_i$ into {\em small} and {\em large}. The parameters $ (N,\tet1,\mu,\tau)$
give a quantitative meaning to this statement.
\end{remark}
\begin{Example}\label{ex2}
{\rm  Fix $N>C_1^{-1}\sqrt{82}$, $\theta,\mu,\tau$ and consider the affine subspace
  $ A\frec{N} [0,0, 3; (0,0,9,-1), (0,1,4,-1),(0,0,1,0))]_3$ of Example \ref{ex1}.  For all $t$ large enough (i.e. $t> 66 C_1 N$ ), setting
 $$m(t)=(-11,15,3,27)+ (1,0,0,0)t$$ we have $$m(t) \frec{N} [0,0, 3, p_4(N,t); (0,0,9,-1), (0,1,4,-1),(0,0,1,0),v_4(N))], $$ where
 $v_4(N)=(v^{(1)}_4(N), \dots, v^{(4)}_4(N))$ is a vector such that: $|v_4(N)|<C_1 N$, the first component  $v_4^{(1)}(N)=1$; finally  $p_4(N,t)= t- P(N)$ with $|P(N)|<33 C_1 N$.  Hence $m$ is
  a $(N,\theta,\mu,\tau)$ good point of  $A$ provided that $t> \theta  N^{4d\t1}- 33 C_1 N$.}
\end{Example}
 \begin{remark}\label{varlm}
1)\quad If    $\ell$   is a  cut  for the
point $m \frec{N}[v_i;p_i]$, with allowable parameters $ (N,\theta',\mu' ,\tau)$ it is also so for all parameters $(N,\tet1,\mu,\tau)$ with $ c<\theta\leq
\theta'<C,\ c<\mu'\leq \mu<C $.

2)\quad If for a given $\ell,\tau_0\leq \tau\leq \td/4d$ we have  $p_\ell\leq cN^\tau,\, p_{\ell+1}\geq CN^{4d\tau}$, then $\ell$ is a cut with parameters $(N, \tet1,\mu,\tau)$ for every choice of  $ c<\tet1,\mu<C$.

\end{remark}
\begin{Lemma}\label{mah}
Consider $m,r \in \Z^d_1$ with $m\frec{N}[v_i;p_i]$, $r\frec{N}[w_i;q_i]$
 suppose that $\ell$  is a  cut for  $m$ with the allowable parameters $ N,\theta',\mu',\tau$,  and suppose there exist   parameters   $ c<\theta<\theta'<C$, $c<\mu'<\mu<C $:
 \begin{equation}
\label{vicin}|r-m|<C_1^{-1}(\mu-\mu')N^{\tau-1},\ C_1^{-1}(\theta'-\theta)N^{4d\tau-1} .
\end{equation} then:

 (1) $\ell$   is a   cut  for the point $r$, for all  allowable parameters  $(N,\theta, \mu,\tau)$  for which \eqref{vicin} holds.
\smallskip

(2) $\langle
w_1,\dots,w_\ell\rangle=\langle v_1,\dots,v_\ell\rangle$.

(3)  $[w_i;q_i]_\ell=[v_i;p_i]_\ell+r-m$.
\end{Lemma}

  \begin{proof}  Fix $\tet1,\mu$ satisfying \eqref{vicin}.
Write $ (v_i,r)= (v_i, r-m)+p_i$. For $i\leq \ell$, since $|v_i|\leq C_1N$  we have:
\begin{equation} \label{va} |(v_i,r)|\leq  p_i +|v_i|| r-m|<
\mu' N^{\tau}+ (\mu-\mu')N^{\tau}= \mu N^{\tau}.
\end{equation} From Formula \eqref{va2}   by the definition of $N$--optimal, for all
$v\in \Bn\setminus\langle v_1,\dots,v_\ell\rangle$ one has
\begin{equation}\label{vab} \quad\quad|(v,r)|=|(v,m)+(v,r-m)| \geq
 p_{\ell+1 }  - | v| |r-m |>\theta' N^{4 d\tau}-C_1N |r-m|
= \theta N^{4 d\tau}.
\end{equation}

\noindent (1), (2)\quad By induction on $i$ we wish to show that $ q_i<\mu N^{\tau}$
and $w_i\in \langle v_1,\dots,v_\ell\rangle$ for all $i\leq \ell$. For $i=0$ this is trivial, so assume that
for $0\leq i<\ell$, we have $\langle w_1,\dots,w_i\rangle\subset
\langle v_1,\dots,v_\ell\rangle$. Since the $v_i$ are independent,
 there exists $h\leq \ell$ such that $v_h\notin \langle w_1,\dots,w_i\rangle$. By \eqref{va} $ q_{i+1}\leq |(v_h,r)| < \mu N^{\tau}.$

 By contradiction suppose that $w_{i+1}\in \Bn\setminus\langle
v_1,\dots,v_{\ell}\rangle$, applying  formula \eqref{vab} we would get $(w_{i+1},r):= q_{i+1}> \theta N^{4 d\tau}>\mu N^{\tau}$, a contradiction.


 Since the $w_i$  (as well as the  $v_i$) are  linearly independent, clearly $\langle v_1,\dots,v_\ell\rangle=\langle
 w_1,\dots,w_\ell\rangle$.  This proves (2).
 As a consequence   for $s>\ell$, we apply again formula \eqref{vab} to
   $w_{s}\in \Bn\setminus\langle v_1,\dots,v_\ell\rangle $; we obtain $ q_{j+1}> \theta   N^{4 d\tau}$. This completes the proof of (1).

(3) \quad  By (2) the space $[w_i;q_i]_\ell$ is the one parallel to $[v_i;p_i]_\ell$ and passing through $r$. The result follows.\end{proof}

\begin{remark}\label{dueta}
 Note that if we know that $m,r$  both have an $\ell$ cut with parameters $N,\tet1,\mu,\tau$ then we can deduce that  the subspace $[w_i;q_i]_\ell$ is the one parallel to $[v_i;p_i]_\ell$ and passing through $r$ provided that:
 \begin{equation}
 |r-m| < C_1^{-1}c( N^{4d \tau-1}- C c^{-1} N^{\tau -1})
\end{equation}
notice that   $C_1^{-1}c( N^{4d \tau-1}- C c^{-1} N^{\tau -1})\geq N^\tau$, actually in our computations we will have $|r-m|< N^3$.
\end{remark}

\begin{remark}
With the above lemma we are stating that if $m$  has a $\ell$ cut with parameters $\theta',\mu',\tau$  then, for all choices of $\theta<\theta',\mu'<\mu$, for which $ \tet1,\mu$ are allowable parameters, there exists a spherical neighborhood $B$ of $m$ such that all points $r\in B$  have a $\ell$ cut with parameters $N,\tet1,\mu,\tau$. The radius of $B$ is determined by Formula \eqref{vicin}. Note moreover that if $r$ has a cut $\ell$ for some parameters then so has $-r$ and with the same parameters. Then lemma \ref{mah} holds verbatim if in formula \eqref{vicin} we substitute $|m-r|$ with $|m+r|$.
\end{remark}

The definitions which we have given are sufficient to define and analyze the quasi--T\"oplitz functions, which are introduced in section \ref{toppa}.
In the next subsection we collect some definitions which are useful for the measure estimates and which are independent of the auxiliary parameters $\tet1,\mu$.
\subsection{ Standard cuts}
The following
construction will be useful: we divide  $$[ C N^{4d\tau_0},c
N^{{\tau_1}/4d})=\cup_{i=1}^{d-1}[N^{S_i},N^{S_{i+1}})\cup [N^{S_d},c
N^{{\tau_1}/4d})$$  by setting $N^{S_1}:=C  N^{4d\tau_0}$ and defining
recursively
$$ c^{-1} N^{S_{i+1}}= c^{-1}C \cdot (c^{-1} N^{ S_i})^{4d  }\,,\quad i=1,\dots d-1.$$

 By definition we get $$c^{-1}N^{S_j}=  (c^{-1}C)^{\sum_{i=0}^{j-1}(4d)^i}N^{(4d)^j\tau_0}$$

Recalling that $N>N_0= C c^{-1}$ and   ${\tau_1}= (4d)^{d+1}(\tau_0+1)$, we get
$$c^{-1} N^{S_d}\leq  N^{d(4d)^{d-1}+(4d)^d\tau_0}\leq N^{{\tau_1}/4d}. $$
 We set
$$ \varrho_0:=\tau_0,\; \varrho_d:=\frac{\tau_1}{4d},\quad c N^{ \varrho_i}:=N^{S_i},\ 0<i<d. $$
\begin{Lemma}\label{cutlemma1}For all allowable parameters $c< \tet1,\mu<C$
and for each point $m\frec{N}[v_i;p_i]$  we  construct a  {\em standard cut }
$ \ell , \ 0\leq\ell\leq d$   for $m$    for which the parameter $  \tau $ is one of the previously defined  numbers $ \varrho_i,\ i=0,\ldots,d$.

If $|m|\geq N^{\tau_1}$, then $\ell<d$, if  $ p_1 <  C N^{4d \tau_0}$ then $\ell>0$.
\end{Lemma}
\begin{proof} Let $m\frec{N}[v_i;p_i]$. If $ p_d  \leq c N^{{\tau_1}/4d}$ then we set
$\ell=d$ and $\tau= \varrho_d={\tau_1}/4d$. If $ p_1 \geq C N^{4d \tau_0}$ then we
set $\ell=0$ and $\tau= \varrho_0=\tau_0$.

Otherwise if  $ p_1 < C N^{4d
\tau_0}$ and $ p_d  > c N^{{\tau_1}/4d}$ then  at least one of the $d-1$
intervals $(N^{S_i}, N^{S_{i+1}})$ with $i=1,\dots, d-1$   does not
contain any element of the ordered list $\{ p_2 ,\dots, p_{d-1}\}$.
The parameters  $\ell,\tau$ are fixed by setting $\tau= \varrho_{\bar\imath}$ where $ c N^{ \varrho_{\bar\imath}}=
N^{S_{\bar\imath}}$ and $\bar\imath$ is the smallest among the indices $i$ such
that the interval $(N^{S_i}, N^{S_{i+1}})$ does not contain any
points of the list $\{ p_2 ,\dots, p_{d-1}\}$; finally
$\ell<d$ is the index for which   $ p_\ell\leq N^{S_{\bar\imath}}= c
N^{\tau}$ and  $ p_{\ell+1 }  \geq N^{S_{\bar\imath+1}}=  C (c^{-1} N^{S_{\bar\imath}})^{4d}= C
N^{4d\tau}$.

 If
$ p_d  \leq c{N}^{{\tau_1}\over 4d}$, we apply  Cramer's rule  to the equations $ V m= p$ given by the presentation.  We have $ |m|=
|V^{-1}p|\leq c   d!{N}^{{\tau_1}/4d} (C_1N)^{d-1}< {N}^{\tau_1}$  since $\frac{{\tau_1}}{4d}+d<{\tau_1}$ and as soon as $N>c d!C_1^{d-1}$.\end{proof}

\subsection{Cuts and good points}
As shown in the introduction we need a decomposition of $\Z^d_1$ as in formula \eqref{pustola}. For any given $N$  we set

\begin{equation}\label{nodiv}
 A_0= A_0(N):= \{ m\in \Z_1^d \;:\; m\frec{N}[v_i;p_i]\quad{\rm with} \quad  p_1> C K^{4d \tau_0} \}
\end{equation}
In order to define $A_\ell$ we set
\begin{definition} \label{papilla} For all $[v_i;p_i]_\ell\in \mathcal H_N$ with $1\leq \ell<d$ and  $ p_\ell \leq c N^{{\tau_1}\over 4d}$, the set:
\begin{equation}\label{pippo}
[v_i;p_i]_\ell^{g} := \end{equation}$$ \left\{x\in
[v_i,p_i]_\ell\,\vert \quad  |x|> N^{\tau_1}\,, \; |(v,x)|\geq  C \max(
N^{4d\tau_0},c^{-4d} p_\ell ^{4d}) , \forall v\in B_{N}\setminus
\langle v_i\rangle_\ell\right\} $$ will be called the $N-${\em good}
portion of the subspace $A=[v_i;p_i]_\ell$.\end{definition}

\begin{remark}
 Notice that every $v\in B_{N}\setminus
\langle v_i\rangle_\ell $ gives a non constant linear function $v\cdot x$ on $A$. Thus the good points of $A$ form a non empty open set complement of a finite union of strips  around subspaces of codimension 1 in $A$.
Note moreover that  we are interested only in integral points  and the integral points in $A$ which are not good form a finite union of affine subspaces of codimension one in $A$.
\end{remark}
 \begin{figure}[!ht]
\centering
\begin{minipage}[b]{11cm}
\centering
{\psfrag{A}{$A_0$}
\psfrag{a}{$\mathtt j_j$}
\psfrag{b}{$\mathtt j_i$}
\psfrag{c}{$ a_2$}
\psfrag{d}{$ b_2$}
\psfrag{e}{$ a_1$}
\psfrag{f}{$ b_1$}
\psfrag{H}{$H_{i,j}$}
\psfrag{S}{$S_{i,j}$}
\psfrag{m}{$ \mathtt j_j-\mathtt j_i$}
\psfrag{l}{$ \mathtt j_j+\mathtt j_i$}
\includegraphics[width=11cm]{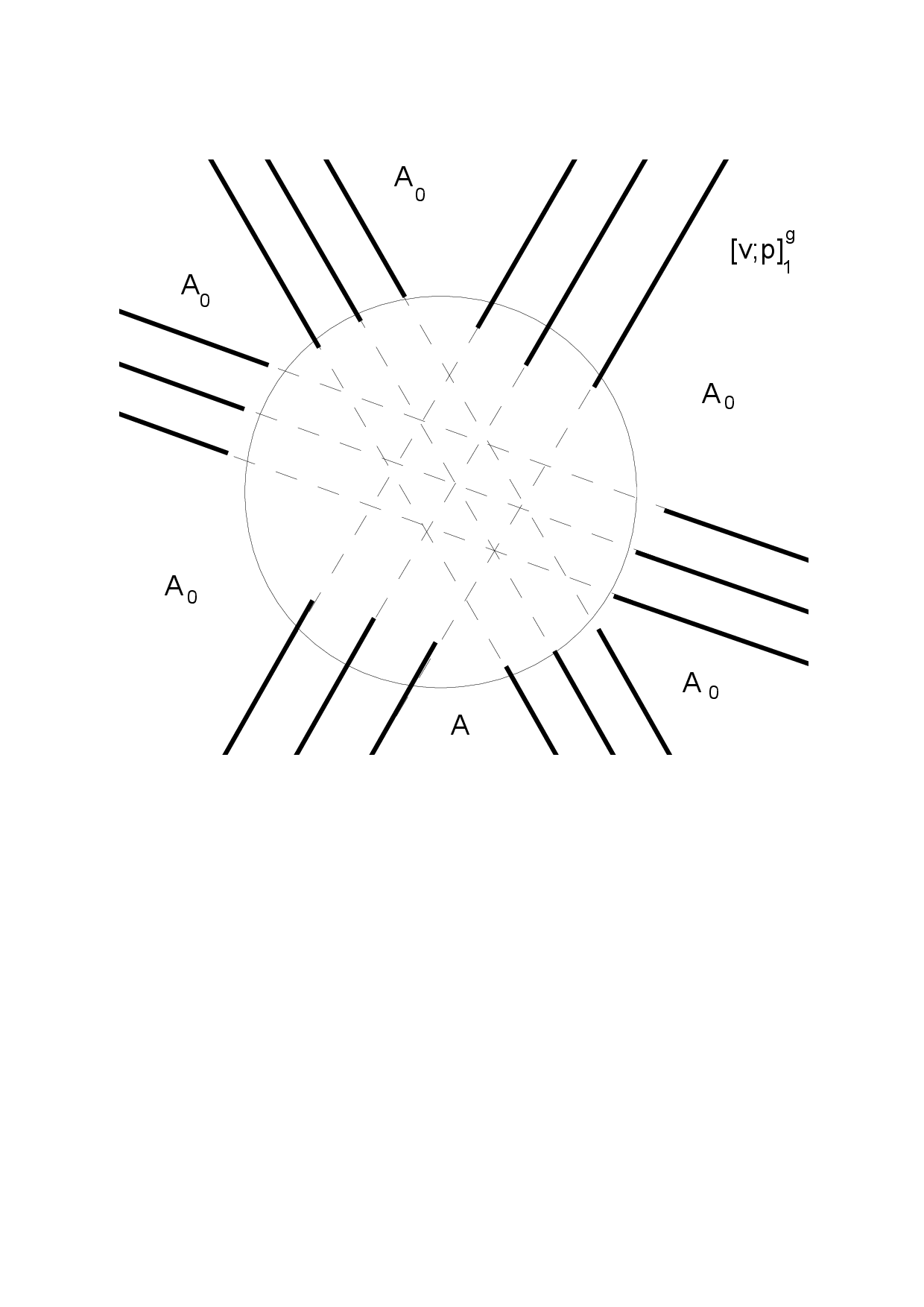}}
\caption{\footnotesize{A drawing of the standard decomposition in $\Z^2_1$. $A_0$ is $\Z^2_1$ minus the dashed lines (each dashed line is described by an equation $[v;p]_1$). On each dashed line the set $[v;p]_1^g$ is signed in solid boldface. Note that $[v;p]_1^g$ is $[v;p]_1\cap \Z^2_1$ minus a finite number of subspaces of codimension two, i.e. points.} \label{fig1}}
\end{minipage}
\end{figure}
\begin{Lemma}\label{minko} Given $p\leq  c N^{\tau_1/(4d)}$ we  fix   $\tau(p)$ so that $N^{\tau(p)}=\max(N^{\tau_0}, c^{-1} p)$ (note that $\tau_0\leq \tau\leq \tau_1/(4d)$).  The following holds:
 for all $c< \tet1,\mu<C$
and for all affine subspaces $[v_i;p_i]_\ell\in {\mathcal H}_N $ such that $  p_\ell =p$, we have that  every point $m\in [v_i;p_i]_\ell^g$  is an $(N, \tet1,\mu,\tau(p))$--good point for $[v_i;p_i]_\ell$.
\end{Lemma}
\begin{proof}
By hypothesis (Formula \eqref{pippo}) $$p_{\ell+1}= (v_{\ell+1},m) \geq  C \max(
N^{4d\tau_0},c^{-4d} p ^{4d}) , $$ recall that $p_\ell=p$. If $  p \leq  cN^{\tau_0}$ then $\tau(p)=\tau_0$ by definition. Since  $p_{\ell+1}\geq  CN^{4d\tau_0}$   $m$ has the cut $\ell$ for all  choices of $c< \tet1,\mu<C$.  Otherwise $c N^{\tau_1/(4d)}  \geq p > cN^{\tau_0}$ and $ p_{\ell+1}\geq  C c^{-4d} p ^{4d}$.
   So in conclusion for all $c< \tet1,\mu<C$
 we have $p_\ell=p= c N^{\tau(p)}< \mu N^{\tau(p)}$  and $p_{\ell+1}\geq  C N^{4d{\tau(p)}}> \theta  N^{4d{\tau(p)}}$, hence the cut.
\end{proof}

We now show that Formula \eqref{pustola} provides a decomposition of $\Z^d_1$.
\begin{Proposition}\label{key2}
Each point $m\frec{N}[v_i,p_i]$ with $|m|>N^{\tau_1}$ and $ p_1 <C N^{4d\tau_0}$ belongs to the set $[v_i;p_i]_\ell^g$  for some choice $0<\ell<d$.
\end{Proposition}
\begin{proof}
According to Lemma \ref{cutlemma1},  each point $m$ has a normalized cut  $0<\ell<d$ for all allowable $ \tet1,\mu$ and for some  $\tau_0< \tau< \tau_1/4d$ with $\tau$ in the finite list $\{\varrho_1,\dots,\varrho_d\}$. Thus for all $w\in \Bn \setminus \langle v_i\rangle _\ell$ we have
$ |(m,w)|> \theta N^{4d\tau}$ for all $\theta<C$, moreover $p_\ell <\mu N^\tau$ for all $\mu>c$. Hence  $ |(m,w)| \geq  C N^{4d\tau}> C N^{4d\tau_0}$ and $ p_\ell \leq c N^\tau$.   Combining these relations we obtain $$|(m,w)|\geq  C\max(N^{4d\tau_0}, c^{-4d} p_\ell ^{4d}),$$ hence $m\in [v_i;p_i]_\ell^g$ by Definition \ref{papilla}.
\end{proof}

\begin{Lemma}\label{lintor}
 Given $p\leq  c N^{\tau_1/4d}$ fix $\tau(p)$ as in Lemma \ref{minko}, then the following holds. Given  $m\in \Z^d_1$  with $m\in [v_i;p_i]_\ell^g$ and $p_{\ell}=p$, then for all $r\in \Z^d_1$ and
 for all parameters $c< \tet1,\mu<C$ such that
\begin{equation}\label{ladisa}
|r-m|<C_1^{-1}(\mu-c) N^{\tau_0-1},C_1^{-1}(C-\theta)N^{4d\tau_0-1} ,
\end{equation}
$r,m$ have the same cut $ \ell$ with parameters $(N, \tet1,\mu,\tau(p)) $ with parallel corresponding affine spaces.\end{Lemma}
\begin{proof}
We can apply   Lemma \ref{minko} to  $m$, obtaining the cut $\ell$ with parameters $(N,\theta',\mu',\tau)$   for all $c<\theta',\mu'<C$. Then, we may apply Lemma \ref{mah} obtaining the required cut for $r$  for any choice of $\tet1,\mu$ satisfying Formula \eqref{vicin} with respect to  $\theta',\mu'$.
 Since  $\theta',\mu'$ can be taken arbitrarily close to $c,C$ Formula \eqref{vicin} follows from Formula \eqref{ladisa}.
 \end{proof}

\section{Quasi--T\"oplitz functions\label{toppa}}

Now and in the following we fix $c=\frac12$, $C=4$.


\begin{definition}\label{taubilinear}
Given $N, \tet1,\mu,\t1$ such that   $1/2< \tet1,\mu<4$,
$\tau_0\leq \t1\leq \td/4d$ and  $ 4 N^3< \frac 12 N^\td$ we
say that a monomial
$$ e^{\ii (k,\teta)}I^lz^\alpha{\bar z}^\beta z_m^\sigma z_n^{\sigma'}$$ is $(N, \tet1,\mu,\t1)$--bilinear
 if it satisfies momentum conservation \eqref{mome} i.e.
$$ \sigma m+\sigma' n =-\pi(k,\alpha,\beta),$$
\begin{equation}\label{zerouno}
|k|<  N\,,\qquad |n|,|m| >  \theta N^\td \,,\qquad \sum_j |j|
(\alpha_j+\beta_j)< \mu N^3 \,.
\end{equation} and moreover
  there exists $0< \ell<d$ such that both $n,m$ have a $\ell$ cut with parameters $N, \tet1,\mu,\tau$.
By convention  if $m\frec{N}[v_i;p_i]$ and $n\frec{N}[w_i;q_i]$
 with $(p_1,\cdots,p_\ell,v_1,\cdots,v_\ell)\preceq
(q_1,\cdots,q_\ell,w_1,\cdots,w_\ell)$ we  say that the monomial has the cut
$[v_i;p_i]_\ell$.
(this defines univocally an affine subspace associated to the monomial).
Note that by Lemma \ref{cutlemma1} we are sure that $\ell<d$.
In ${\mathcal A}_{r,s}$ we consider the subspace of
$(N, \tet1,\mu,\t1)$--bilinear functions and call
$\Pi_{(N, \tet1,\mu,\t1)}$ the projection onto this subspace.
  \end{definition}
Notice that by Remark \ref{dueta} the cut
$[w_i;q_i]_\ell$ is completely fixed by $[v_i;p_i]_\ell$ and $\sigma
m+\sigma' n$.

Having chosen $1/2,4$ as bounds for the parameters $ \tet1,\mu$ we
will call {\em low momentum variables},   denoted by $w^L$ and
spanning the space $\ell^L_\rho$, the $z_j^\sigma$ such that $|j|<4
N^3$. Similarly  we call {\em high momentum variables},  denoted by
$w^H$ and spanning the space $\ell^H_\rho$,  the $z_j^\sigma$ such
that $|j|> N^\td/2$. Notice that  the low and high variables are
separated.
We may write uniquely
\begin{equation}\label{cocca}
\Pi_{(N, \tet1,\mu,\t1)} f= \sum_{\s,\s'=\pm}\sum_{{ |m|,|n| > \theta N^{\tau_1}\atop\exists \ell:\; m,n \,{\rm have \;a }\, \ell\, {\rm cut}}\atop{{\rm with\, parameters}\, N, \tet1,\mu,\t1 }} f_{m,n}^{\s,\s'}(I,\teta,w^L)  z^\s_m z^{\s'}_n
\end{equation}
where
$$
f_{m,n}^{\s,\s'}(I,\teta,w^L)= \sum_{|k|<N\,,\, |\a|+|\b|< \mu N^3\,,\atop -\pi(k,\a,\b)=\s m+\s' n} f_{m,n,k,\a,\b}^{\s,\s'}(I) e^{\ii \langle k,\teta\rangle} z^\a \bar z^\b \,,
$$
finally $f_{m,n,k,\a,\b}^{\s,\s'}(I)$ is an analytic function of $I$ for $|I|<r^2$.
\vskip20pt

   Given an affine subspace
$A\frec{N}[v_i;p_i]_\ell$, we construct
$(N, \tet1,\mu,\t1,A)$--restricted  {\em T\"oplitz functions} by
setting:
\begin{equation}\label{restrict}
 g(A,I,\teta,z):=\sum_{n,m,\sigma,\sigma',k,\alpha,\beta}^{(N, \tet1,\mu,\t1,A)}
\!\!\!\!\!\!\!{g}^{\sigma,\sigma'}_{k,\alpha,\beta}(\sigma m+\sigma'
n,A;I)e^{\ii \langle k,\teta\rangle}z^\alpha\bar
z^\beta z_m^\sigma z_n^{\sigma'}\,,
\end{equation}
here the sum $\sum\limits^{(N, \tet1,\mu,\t1,A)}$ means the sum over those $ n,m,\sigma,\sigma',k,\alpha,\beta$ such that
$e^{\ii \langle k,\teta\rangle}z^\alpha\bar
z^\beta z_m^\sigma z_n^{\sigma'}$ is a
$(N, \tet1,\mu,\t1)$--bilinear monomial with
cut given by $A$.
Finally
$g^{\sigma,\sigma'}_{k,\alpha,\beta}(h,B;I)$ is  an
analytic function of $I$, for $|I|<r^2$, which is  well defined for all $\sigma,\sigma'=\pm 1$, $k\in \Z^b$, $h\in Z^d_1$
$\alpha,\beta\in \N^{\Z_1^d}$ and  $B\frec{N}[w_i;q_i]_\ell\in
\mathcal H_N$  such that $|k|<N$,   $h= -\pi(k,\alpha,\beta)$,
$\sum_{j\in \Z_1^d}|j|(\alpha_j+\beta_j)<\mu N^3$ and  $|q_\ell|<
4N^{\td/4d}$.

Notice that the coefficient $g^{\sigma,\sigma'}_{k,\alpha,\beta}(\sigma m+\sigma'
n,A;I)$ depends on $m,n$ only through  $\sigma m+\sigma'
n,A;I$. The sum $\sum_{n,m,\sigma,\sigma',k,\alpha,\beta}^{(N, \tet1,\mu,\t1,A)}$ instead selects those $m,n$ such that  $|m|,|n|> \theta N^\td$, $|\s m+\s' n|<\mu N^3+N  $, $m,n$  have a cut $\ell,\t1$ and the cut of $m$ is $A$ .
\begin{definition}\label{pitop}
A function $g$ is called   {\em piecewise T\"oplitz} if it is of the form:
$$ g= \sum_{A\in  \mathcal H_N \atop A\frec{N}[v_i;p_i]_\ell \,:\, |p_\ell|< \mu N^{\t1}} g(A,I,\teta,z). $$
We denote the space of piecewise T\"oplitz functions as  $\mathbb F( N,\tet1,\mu,\t1)=\mathbb F\subset \mathcal A_{r,s}$
\end{definition}
\begin{remark}\label{pollen}
  Notice that $\mathbb F( N,\tet1,\mu,\t1)$ is a subset of the  $(N, \tet1,\mu,\t1)$ bilinear
functions.
Hence given $g\in \mathbb F( N,\tet1,\mu,\t1)$  we may write it in the form
 \eqref{cocca}
$$
g= \sum_{\s,\s'=\pm}\sum_{{ |m|,|n| > \theta N^{\tau_1}\atop\exists \ell:\; m,n \,{\rm have \;a }\, \ell\, {\rm cut}}\atop{{\rm with\, parameters}\, N, \tet1,\mu,\t1 }} g_{m,n}^{\s,\s'}(I,\teta,w^L)  z^\s_m z^{\s'}_n
$$
and one has that
 \begin{equation}\label{scotto}
  g_{m,n}^{\s,\s'}(I,\teta,w^L)= g^{\s,\s'}(\s m+\s' n,[v_i;p_i]_{\ell}, I,\teta,w^L):=
\end{equation}
 $$\sum_{|k|<N\,,\, |\a|+|\b|< \mu N^3\,,\atop -\pi(k,\a,\b)=\s m+\s' n} g_{k,\a,\b}^{\s,\s'}(\sigma m+\sigma'
n,[v_i;p_i]_{\ell};I)e^{\ii \langle k,\teta\rangle}z^\alpha\bar z^\beta
$$
 if $ |n|,|m|> \theta N^\td $,  $ m\frec{N} [v_i;p_i] $ and there exists $\ell$ such that $m,n$ have a   cut at $\ell$ with parameters $(N,\tet1,\mu,\t1)$.  Otherwise
$g_{m,n}^{\s,\s'}=0$.

Notice  that $g^{\s,\s'}(\s m+\s' n,[v_i;p_i]_{\ell}, I,\teta,w^L)$ depends on $m,n$ only through the subspace $[v_i;p_i]_{\ell}$ and $\s m+\s' n$. In other words the {\em} quadratic form  representation \eqref{cocca}  of a $(N, \tet1,\mu,\t1)$--piecewise   T\"oplitz function has translation invariance in the sense that   $g_{m,n}^{\s,\s'}= g_{m_1,n_1}^{\s,\s'}$ provided that:  $ \s m+\s' n= \s m_1+\s' n_1$, there exists $\ell$ such that $m,n,m_1,n_1$ all have an $\ell,\tau$ cut   and both $m,m_1$ have
 the same associated subspace $[v_i;p_i]_{\ell}$.\end{remark}

Given $f\in \A_{r,s}$  and $\mathcal F\in{\mathbb F}$, we
define     \begin{equation}\label{limi} \bar f:=N^{4
d\t1}\big(\Pi_{(N, \tet1,\mu,\t1)} f- \mathcal F\big).
\end{equation}

Finally set   \begin{equation}\label{unobisbis} \|X_{f }  \|_{r,s}^T
:=\sup_{N\geq  {K}\,,N\in \N, \atop \tau_0\leq\t1\leq
\td/4d}[\inf_{ {\mathcal F}\in \mathbb F}(
\max(\|X_f\|_{r,s},\| X_{\mathcal F}\|_{r,s},\| X_{\bar
f}\|_{r,s} )) ].
 \end{equation}

 \begin{definition} \label{topbis}
We say that $f\in \A_{r,s}$  is quasi- T\"oplitz  of  parameters
$( {K}, \tet1,\mu)$ if $ \|X_f\|_{r,s}^T <\infty. $   We call
$\|X_f\|_{r,s}^T$  the quasi-T\"oplitz norm of $f$.
\end{definition}

\begin{remark}\label{sons}
Notice that our definition includes the T\"oplitz and anti-T\"oplitz
functions by setting, for any $N,\tet1,\mu,\t1$, $\mathcal F = \Pi_{(N, \tet1,\mu,\t1)} f$ and
hence $\bar f=0$.
  In the case of  T\"oplitz functions one trivially has $\|X_f\|_{r,s}^T= \| X_f\|_{r,s}$.
\end{remark}
\begin{remark}\label{osserv}
Intuitively a quasi-T\"oplitz function is a function whose bilinear part is ``well approximated" by a piecewise T\"oplitz function.

Given $K,\theta,\mu$  and a function $f\in \mathcal A_{r,s}$ we proceed as follows. For any choice of $N>K$ and $ \tau_0\leq\tau\leq \tau_1/4d$  we compute a ``weighted distance"  between  $\Pi_{N,\theta,\mu,\tau}f$ and the subspace $\mathbb F$. First, for any  $\mathcal F\in \mathbb F$, we define
$\bar f:=  N^{4d\tau}(\Pi_{N,\theta,\mu,\tau}f-\mathcal F)$ and compute
$\| X_{\bar f}\|_{r,s}$( since $f$ and $\mathcal F$ are in $\mathcal A_{r,s}$  all this quantities are finite); then, in order to obtain a  ``distance",   we perform the infimum over $\mathcal F\in \mathbb F$. Essentially a function $f$ is quasi-T\"oplitz if this weighted distance stays bounded as $N\to \infty$. Note that one could probably prove that the $\inf$  in our definition is actually a $\min$, thus associating  to $f$ a  ``canonical choice'' $\mathcal F$  (depending on $N,\tet1,\mu,\tau$), this however is not needed in our construction, we only need a weaker decomposition as follows.

If $f$ is quasi-T\"oplitz with parameters $(K,\tet1,\mu)$  then for any  $N\geq K$ and $ \tau_0\leq\tau\leq \tau_1/4d$ there exist functions $\mathcal F \in \mathbb F(N,\tet1,\mu,\t1)$, such that setting
$$
\bar f:=  N^{4d \t1}( \Pi_{N,\tet1,\mu,\t1}f- \mathcal F) \,,\quad {\rm we \, have}\quad \|  X_{\mathcal F}\|_{r,s},\|  X_{\bar f}\|_{r,s}< 2  \|X_f\|_{r,s}^T .
$$
\end{remark}
We now concentrate on the very special case of {\em diagonal quadratic functions} $Q(z) :=  \sum\limits_{m\in \Z^d_1}Q_m z_m\bar z_m $. We notice that  in this case we may reformulate the projection on $(N,\tet1,\mu,\tau)$--bilinear functions as:
$$\Pi_{(N,\tet1,\mu,\tau)}Q(z) = \sum_{A\frec{N}[v_i;p_i]_\ell\in \mathcal H(N)\atop |p_\ell|\leq \mu N^{\t1}} \sum_{m\in \Z^d_1}^{(N,\tet1,\mu,\tau,A)} Q_m z_m\bar z_m $$
where $\sum\limits^{(N,\tet1,\mu,\tau,A)}_{m}$  coincides with  $\sum\limits^{(N,\tet1,\mu,\tau,A)}_{m,m,+,-,0,0,0}$ of formula  \eqref{restrict} namely it is the sum over  those $m$ with
$|m|> \theta N^\td$ which  have an  $\ell$ cut with parameters
$( N,\tet1,\mu,\tau)$ associated to the affine space $A$.
\begin{Lemma}\label{diago}
Let $ Q(z)$ be a  quasi-T\"oplitz
 { diagonal quadratic function}.
There exist two  {\em diagonal quadratic functions} ${\mathcal Q}(z)\in \mathbb F$, $\bar Q(z)$:
\begin{equation}\label{pargol}
 {\mathcal Q}(z)= \sum_{A\frec{N}[v_i;p_i]_\ell\in \mathcal H(N)\atop |p_\ell|\leq \mu N^{\t1}}\sum^{(N,\tet1,\mu,\tau,A)}_{m\in \Z^d_1} \mathcal Q(A) z_{m}\bar z_m\,,\end{equation}
 $$  N^{-4d\tau} \bar Q(z) = \Pi_{(N,\tet1,\mu,\tau)}Q(z)- \mathcal Q(z) \,,$$

  such that for all $m$ which have a cut at $\ell$ with parameters $(N,\tet1,\mu,\tau)$   associated to $A$ one has
\begin{equation}\label{appr1}
 Q_m = \mathcal Q(A) + N^{-4d \tau} \bar Q_m .
\end{equation}
 Moreover one has
\begin{equation}\label{appr2}
 |Q_m|,|\mathcal Q(A)|,|\bar Q_m|\leq 2 |X_Q|^T_r
\end{equation}
\end{Lemma}
\begin{proof}
Since  $Q$ is quasi-T\"oplitz  we may approximate it by a function $\mathcal F\in \mathbb F$; moreover since $Q$ is quadratic and diagonal we may choose $\mathcal F$ of the same form.

Hence we can we can fix  quadratic and diagonal functions $\mathcal Q\in \mathbb F$  and $\bar Q= N^{4d\tau} (\Pi_{N,\tet1,\mu,\tau} Q- \mathcal Q)$ so that $\|X_{\mathcal Q}\|_r, \|X_{\bar Q}\|_r\leq 2 \|X_Q\|^T_r$.
To conclude we need to show that a quadratic, diagonal and piecewise T\"oplitz  $\mathcal Q$ is of the form \eqref{pargol}. Indeed  by Formula \eqref{restrict} an $(N,\tet1,\mu,A)$--restricted T\"oplitz function which is  is quadratic  and diagonal is of the form:
 $$ g(A,z)= g(A) \sum_m^{(N,\tet1,\mu,\tau,A)} z_m \bar z_m$$
Our last statement is proved by  noting that
$$
 \|X_{Q}\|_{r}= 2 \sup_{\|z \|_{\rho}<r}\sum_{h\in \Z^d_1}
 |Q_h| \frac{|z_h|}{r} e^{\rho|h|}\geq |Q_j|
$$
by evaluating at  $z^{(j)}_h:=   \delta_{jh} e^{-\rho |j|}r/ 2$. The same holds for $\mathcal Q$ and $\bar Q$.
\end{proof}

\begin{remark}
It is  interesting to compare the set of quasi-T\"oplitz functions with  the T\"oplitz-Lipschitz functions of \cite{EK}. The first observation is that  the set of quasi-T\"oplitz functions is closed with respect to Poisson brackets, while the T\"oplitz-Lipschitz functions are closed only with respect to to Poisson brackets when one of the functions is quadratic. This is due to the fact that the property  of being quasi-T\"oplitz depends on the idea of $(N,\tet1,\mu,\t1)$ bilinear projection, and not on the Hessian of the function.
Indeed one may easily produce functions which are quasi-T\"oplitz but not T\"oplitz-Lipschitz (even in the class of functions which preserve momentum).

A second more subtle  point is weather the class of {\em quadratic}  quasi-T\"oplitz  and T\"oplitz-Lipschitz functions  coincide, this should be true at least for $d\leq 2$ and we expect some inclusions to hold even in higher dimension.

\end{remark}

\section{An abstract KAM theorem}
 The starting point for our KAM Theorem is a family
of Hamiltonians
\begin{equation}\label{hamH} H={\mathcal N}+P,\quad {\mathcal N}
=\la\omega(\xi),I\ra+ \sum_{n\in \Z^d_1}\Omega_n(\xi)z_n\bar z_n,\quad P=
P(I,\teta,z,\bar z, \xi).
\end{equation}
defined in $ D(r,s) \times  \mathcal O$, where $\mathcal O\subset
\R^b$ is open and bounded, say it is contained in a set of diameter $D$.  The functions $\omega(\xi),\Omega_n(\xi)$ are well defined for
$\xi\in \mathcal O$.

It is well known that, for each $\xi\in \mathcal O$, the Hamiltonian equations of motion
for the unperturbed ${\mathcal N}$
admit the special solutions  $(\teta, 0, 0, 0)\to (\teta+\omega(\xi) t,
0,0,0)$ that  correspond  to
  invariant tori in the phase space.

\sss Our aim is to prove that,  under suitable hypotheses, there is a set   $\mathcal O_\infty\subset \mathcal O$   of positive Lebesgue measure, so that,   for all
 $\xi \in \mathcal O_\infty$  the Hamiltonians $H$ still admit
 invariant tori.

 \medskip
 We require the following hypotheses on ${\mathcal N}$ and $P$.

\bs \noindent $(A1)$\quad {\it Non--degeneracy:} The map $\xi\to
\omega(\xi)$ is a $C^1_W$ diffeomorphism between $\mathcal O$ and
its image with $|\ome|_{C^1_W},| \nabla \ome^{-1}|_{\mathcal O}\leq
M$.

\bs \noindent $(A2)$\quad  {\it Asymptotics of normal frequency:}

\begin{equation}\label{asymp1} \Omega_n(\xi)=|n|^2+\tilde
\Omega_n(\xi),
\end{equation}
where $\tilde\Omega_n$'s are $C^1_W$  functions of $\xi$ with
 $C^1_W$-norm uniformly bounded  by some  positive constant $L$ with $LM< \1$.

 \bs \noindent$(A3)$\quad  {\it Momentum conservation:} The perturbation $P$
satisfies momentum conservation, it is real analytic  and $C^1_W$ in
$\xi\in \mathcal O$. Namely $P\in \A_{r,s}$.

\bs \noindent$(A4)$\quad  {\it Quasi-T\"oplitz property and
Regularity:} the functions $P$ and $\sum_j\tilde\Omega_j|z_j|^2$ are
quasi-T\"oplitz with parameters $( {K}, \tet1,\mu)$ where $$
\1<  \tet1,\mu<4\,,\qquad(\mu-\1) {K}^{\tau_0},
(4- \theta) {K}^{4d\tau_0}>5  {K}^4.$$ One has the bounds:
$$ \|X_{ P}\|^T_{\!{}_{D(r,s), \mathcal O}}<\infty\,, \| \langle\tilde\Omega z,z\rangle\|^T_{\!{}_{D(r,s), \mathcal O}}<L$$


\bs \noindent Now we state our infinite dimensional KAM theorem.

\begin{Theorem}\label{KAM}
Assume Hamiltonian ${\mathcal N}+P$ in (\ref{hamH}) satisfies $(A1-A4)$. Let
$\gamma>0$  small enough,  there exists a positive constant
$\varepsilon=\varepsilon(\gamma,b,d,L,M, {K}, \tet1,\mu)$
such that: if $\|X_P\|_{\!{}_{D(r,s), \mathcal O}}^T\leq
\varepsilon$, then there exists a Cantor set $\mathcal
O_\gamma\subset\mathcal O$ with ${\rm
 meas}(\mathcal O\setminus \mathcal O_\gamma)=O(\gamma)$
 and two maps $($analytic in
$\teta$ and $C_W^1$ in $\xi )$
\noindent $$\Psi: \T^b\times \mathcal O_\gamma\to D(r,s),\ \ \ \
 \tilde\omega:\mathcal O_\gamma\to \R^b,$$ where $\Psi$ is
 $\frac{\varepsilon}{\gamma^2}$-close to the trivial embedding
$\Psi_0:\T^b\times \mathcal O\to \T^b\times\{0,0,0\}$ and $\tilde
\omega$ is $\varepsilon$-close to the unperturbed frequency
$\omega$, such that for any $\xi\in \mathcal O_\gamma$ and
$\teta\in \T^b$, the curve $t\to \Psi(\teta+\tilde\omega(\xi)
t,\xi)$ is a linearly stable quasi-periodic solution of the
Hamiltonian system governed by $H={\mathcal N}+P$.
\end{Theorem}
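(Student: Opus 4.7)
The plan is to prove Theorem \ref{KAM} by a Newton--type KAM iteration in which, at each step $\nu$, a symplectic map $\Phi_\nu=\exp(X_{F_\nu})$ kills the low--order resonant part of the perturbation $P_\nu$. The two novelties relative to the classical Kuksin--P\"oschel scheme are the preservation of the quasi--T\"oplitz structure throughout the iteration and its use, together with momentum conservation \eqref{mome}, to verify the second Melnikov conditions on a set of positive measure.

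First I would set up iterative sequences: $\eps_\nu=\eps^{(3/2)^\nu}$, ultraviolet cut--offs $K_\nu$ growing as $\eps_\nu^{-1/\tau}$, analyticity radii $r_\nu,s_\nu$ shrinking geometrically, and Hamiltonians $H_\nu=N_\nu+P_\nu$ with $N_\nu=\langle\omega^{(\nu)}(\xi),I\rangle+\sum_n\Omega^{(\nu)}_n(\xi)z_n\bar z_n$ and $\|X_{P_\nu}\|^T_{r_\nu,s_\nu}\leq\eps_\nu$. At each step, split $P_\nu$ into the low--order truncation $R_\nu=\Pi_{\{|k|\leq K_\nu,\,2|l|+|\alpha|+|\beta|\leq 2\}}P_\nu$, which contains the resonant and near--resonant terms, and a high--order remainder $P_\nu-R_\nu$ which is small by analyticity. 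Solve the homological equation $\{N_\nu,F_\nu\}+R_\nu=\langle R_\nu\rangle$ term by term by dividing each Fourier--Taylor coefficient by its small divisor $\langle\omega^{(\nu)},k\rangle+\sum_n\Omega^{(\nu)}_n\ell_n$, define $H_{\nu+1}=H_\nu\circ\Phi_\nu$, absorb the diagonal corrections into the new normal form $N_{\nu+1}$, and estimate $\|X_{P_{\nu+1}}\|^T_{r_{\nu+1},s_{\nu+1}}\leq C\eps_\nu^{3/2}$ via Cauchy estimates (Lemma \ref{cauchy}), with $F_\nu$ inheriting momentum conservation automatically by Jacobi.

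The core of the argument is the measure estimate. The first Melnikov conditions with $|l|=0,1$ and the second ones with $l=e_m+e_n$ are standard and cost at most $O(\gamma)$. The genuine difficulty is \eqref{semel} with $l=e_m-e_n$: momentum conservation \eqref{mome} forces $n-m=\sum_i\n^{(i)}k_i$, which fixes $n-m$ once $k$ is fixed but still leaves infinitely many pairs $(m,n)$. Here I would invoke the inductively propagated quasi--T\"oplitz hypothesis (A4), which yields a decomposition $\Omega^{(\nu)}_m=|m|^2+\tilde\Omega^{(\nu)}[m]+\bar\Omega^{(\nu)}_mK_\nu^{-4d\tau_1}$ where $\tilde\Omega^{(\nu)}[m]$ depends only on the affine subspace assigned to $m$ by its standard cut (Lemma \ref{cutlemma}) and, by Remark \ref{numero}, takes at most $K_\nu^{3d\tau_1}$ distinct values. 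Lemma \ref{mah}(ii), applied with $|m-n|\lesssim K_\nu$ (so $\ll 5K_\nu^3$), forces $m$ and $n$ to share the same cut subspace, so for each $k$ the condition \eqref{semel} collapses to at most $K_\nu^{3d\tau_1}$ genuinely distinct inequalities, up to a controlled error $O(K_\nu^{-4d\tau_1})$. Excising the corresponding bad parameters costs measure $\lesssim\gamma K_\nu^{b+3d\tau_1-\tau}$, which is summable in $\nu$ precisely by the choice $\tau=(4d)^{d+1}(\tau_0+1)$ in \eqref{itau}.

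What remains is the inductive preservation of the quasi--T\"oplitz property for $P_{\nu+1}$ and for the new diagonal correction $\langle\tilde\Omega^{(\nu+1)}z,z\rangle$. This is the step I expect to be the main obstacle: one must verify that the T\"oplitz approximant $\mathcal F_\nu$ of the bilinear part of $P_\nu$, once divided coefficient--wise by the T\"oplitz divisors $\langle\omega^{(\nu)},k\rangle+|m|^2-|n|^2+\tilde\Omega^{(\nu)}[m]-\tilde\Omega^{(\nu)}[n]$, produces a T\"oplitz approximant of $F_\nu$ on each cut subspace up to an error controlled by $K_\nu^{-4d\tau_1}$, and that this class is stable under the composition $H_\nu\circ\exp(X_{F_\nu})$ and under passage from $K_\nu$ to $K_{\nu+1}$; one must also allow a slight worsening of the parameters $\lambda,\mu$ at each step while keeping them in $(\1,4)$ as required by (A4). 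Granting this, standard telescoping bounds show that $\Phi_1\circ\Phi_2\circ\cdots$ converges uniformly on $D(r_\infty,s_\infty)$ with $r_\infty,s_\infty>0$, defining $\Psi$ and $\tilde\omega=\lim_{\nu\to\infty}\omega^{(\nu)}$ with the claimed closeness properties, while $\mathcal O_\gamma=\bigcap_\nu\mathcal O_\nu$ has measure at least ${\rm meas}(\mathcal O)-O(\gamma)$; linear stability of the invariant torus follows from the purely diagonal form of the limit normal part $N_\infty$.
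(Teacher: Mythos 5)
Your outline reproduces the paper's overall scheme (Newton iteration, ultraviolet cut-offs, using quasi--T\"oplitz plus momentum conservation to reduce the second Melnikov conditions to finitely many inequalities), but the crucial measure step as you describe it would not close. You propose to impose the final bound $\gamma K_\nu^{-\tau}$ at one representative $m^g$ per cut subspace and then absorb the quasi--T\"oplitz error $O(K_\nu^{-4d\tau_1})$. Since $\tau_1\leq \tau/4d$, one has $K^{-4d\tau_1}\geq K^{-\tau}$ --- and for the typical cut with $\tau_1$ close to $\tau_0$ the error $LK^{-4d\tau_0}$ dwarfs $\gamma K^{-\tau}$, so subtracting it from the imposed bound leaves nothing. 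The paper's resolution, embedded in condition {\it iv)} of the definition of $\mathcal O_+$ and in Lemma~\ref{key}, is to impose at $m^g$ a bound tuned to the error scale of the cut, namely $2\gamma\min(K^{-4d\tau_0},2^{-4d}|p_\ell|^{-4d})$, which equals $2\gamma K^{-4d\tau_1}$ by the choice $K^{\tau_1}=\max(K^{\tau_0},2|p_\ell|)$; the desired $\gamma\K^{-\tau}$ is then recovered only at the very end, because $|p_\ell|\lesssim K^{\tau/4d}$. Consequently the measure excised per subspace is not $\gamma K^{-\tau}$ but $\gamma|p_\ell|^{-4d}$, and Lemma~\ref{measure} must balance this against the count from Remark~\ref{numero} ($\approx K^{\ell d}p^{\ell-1}$ subspaces with $|p_\ell|=p$), giving convergence of $\sum_p p^{-4d+\ell-1}$ since $\ell<d$. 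Your quoted cost $\gamma K^{b+3d\tau_1-\tau}$ is not what this computation produces, and the specific form of $\tau$ in \eqref{itau} is needed primarily for the existence of standard cuts (Lemma~\ref{cutlemma}), not for this summability.

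Beyond that, you explicitly defer the inductive propagation of the quasi--T\"oplitz property (``Granting this...''). That is not a peripheral verification: it is the content of Propositions~\ref{smaldiv}, \ref{submain} and \ref{main} in the appendix and occupies most of the paper's technical effort --- in particular the splitting formula \eqref{split}, the bound \eqref{xxx} for $F$, and the bookkeeping of the drifting parameters $(\K_\nu,\lambda_\nu,\mu_\nu)$ that keeps them in the admissible window across infinitely many steps. Your choices $\eps_\nu=\eps^{(3/2)^\nu}$ and $K_\nu\sim\eps_\nu^{-1/\tau}$ differ from the paper's ($\eps_\nu\sim\gamma^{-2}\K_{\nu-1}^{2\tau^2/\tau_0}\eps_{\nu-1}^{4/3}$, $\K_\nu\sim\ln\eps_\nu^{-1}$), but that is a harmless variant; the measure estimate and the unproved quasi--T\"oplitz stability are the genuine gaps.
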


\subsection{Application to the NLS}

The NLS \eqref{equ} is a Hamiltonian equation. We expand the solution in Fourier series as $u=\sum\limits_{m\in \Z^d} u_m\phi_m(x)$
and obtain that the $u_m(t)$ are the Hamiltonian flow of
\begin{equation}\label{hamam}
 N+P= \sum_{i=1}^b (|\mathfrak n_i|^2+\xi_i)|u_{\mathfrak n_i}|^2 +\sum_{n\in \Z^d_1}|n|^2 u_n\bar u_n+\int_{\T^d} g(|\sum\limits_{m\in \Z^d} u_m\phi_m(x)|^2)dx
 \end{equation}
with respect to the symplectic form $\ii \sum_{m\in \Z^d} d u_m\wedge d \bar u_m$. Here  $g$ is a primitive of the analytic function $f$ so it has a zero of degree at least two.  The conservation of momentum follows by translation invariance.

As an example, if $f(u)=|u|^2u$, then  $P=\sum\limits_{m_i\in \Z^d\atop m_1-m_2+m_3-m_4=0} u_{m_1}\bar u_{m_2} u_{m_3}\bar u_{m_4}$, and the constraint $ m_1-m_2+m_3-m_4=0$ ensures that $P$ satisfies momentum conservation. We introduce standard action-angle coordinates:
$u_{\n_j}=\sqrt{I^{(0)}_j+I_j}e^{i\teta_j},$ $j=1,\cdots,b$; $u_n=z_{n},n\neq \{\n^{(1)},\cdots, \n^{(b)}\}$ where $4 r^2 >I^{(0)}_i> 2 r^2 $ and obtain equations \eqref{equ-h}, where $P$ is the last summand of \eqref{hamam}.  Let us suppose without loss of generality that $g(y)= y^p+O(y^{p+1})$,  so that $P$ is  regular and $X_P$ is of order $|I_0|^{2p} r^{-2}$. It is easily seen that $P$ is   T\"oplitz (hence by Remark \ref{sons} $P$ is quasi-T\"oplitz for all choices of $\tet1,\mu$).   Conditions $(A1)$--$(A4)$  hold with $M=1$ and any $L$ (since $\tilde\Omega=0$).

 In order to apply Theorem \ref{KAM} we fix $r= c \varepsilon^{1\over4p-2}$,  with $c$ small.  We have
$\|X_P\|_{r,s}^T \leq C| I_0|^{2p} r^{-2}$ so the smallness condition is achieved.

\section{KAM step}


Theorem \ref{KAM} is proved by an iterative procedure. We produce a
sequence of hamiltonians $H_\nu={\mathcal N}_\nu+P_\nu$ and a sequence of symplectic
transformations $X_{F_{\nu-1}}^1H_{\nu-1}:= H_{\nu}$, well defined on a domain $D(r_\nu,s_\nu)\times \mathcal O_\nu$. At each step, the perturbation becomes
 smaller  at cost of  reducing the analyticity  and parameter domain. More precisely, the perturbation
should satisfy
$\|X_{P_{\nu+1}}\|_{D(r_{\nu+1},s_{\nu+1}),\mathcal O_\nu}^T\leq
\varepsilon_\nu^{\kappa}, \kappa>1$.
The sequence $r_\nu\to 0$ while $s_\nu\to s/4$ and $O_\nu\to \mathcal O_\infty$.
For simplicity of notation,  we denote the quantities in the
$\nu$-th step  without subscript, i.e. $\mathcal O_\nu=\mathcal O$,
$\ome_\nu=\ome$ and so on.  The quantities in the $(\nu+1)^{th}$
step are denoted with subscript $"+"$. Most of the KAM procedure is
completely standard, see \cite{GY3} for proofs.  The new part is: 1. to
show that Quasi T\"oplitz property $(A4)$ for $P$ and
$\langle\tilde\Ome z,\bar z\rangle$ is kept by KAM iteration and
2. prove the measure estimate using the Quasi T\"oplitz property.

For simplicity, below we always use the same symbol  $C$ to denote
constants independent on the iteration.

\vskip20pt
 {\it One step}\qquad Suppose that the Hamiltonian \eqref{hamH}, well defined in
$D(r,s)\times\mathcal O$, satisfies $(A1-A4)$. Moreover $P$ and $\langle
\tilde \Omega z,\bar z\rangle$ are Quasi T\"oplitz with parameters
$( {K}, \tet1,\mu)$ and we have
\begin{equation}\label{prin}
 |\ome|_{C^1_W},| \nabla \ome^{-1}|_{\mathcal O}\leq M,\quad |\tilde\Omega_n|_{C_W^1}\leq L,
\end{equation}
$$
 \|\langle
\tilde\Omega z,\bar z\rangle\|^T_{D(r,s),\mathcal O}\leq L,\quad
\|X_P\|^T_{D(r,s),\mathcal O}\leq \varepsilon. $$

 Our aim is to
construct:  (1) an open  set $\mathcal O_+\subset \mathcal O$ of
positive measure, (2) a 1-parameter group of symplectic
transformations $\Phi_F^t$, well defined for all $\xi\in\mathcal
O_+,\ t\leq 1$ , such that $ \Phi_F^1 H:=H_+={\mathcal N}_++P_+$ still
satisfies $(A1)-(A4)$ in the domain $D(r_+,s_+)$. Finally $P_+$ and $\langle
\tilde \Omega^+ z,\bar z\rangle$ are Quasi T\"oplitz with new
parameters $( {K}_+, \theta_+,\mu_+)$, and we have
$$|\ome_+|_{C^1_W},| \nabla \ome_+^{-1}|_{\mathcal O}\leq M_+; |\tilde\Omega_n^+|_{C_W^1},\|\langle \tilde\Omega^+ z,\bar
z\rangle\|^T_{D(r_+,s_+),\mathcal O_+}\leq L_+$$
$$\|X_{P_+}\|^T_{D(r_+,s_+),\mathcal O_+}\leq
\varepsilon_+=\varepsilon^\kappa. $$

Let us define $$ R:= \!\!\!\!\!\!\!\!\sum_{k,2|p| +|\alpha|+|\beta|\leq 2} P_{k,p,\alpha,\beta}e^{\ii\langle k,\teta\rangle}I^pz^\alpha\bar z^\beta \,,\quad  \langle R\rangle:= \!\sum_{i=1}^b P_{0,e_i,0,0}I_i+ \sum_{j\in \Z_1^d}P_{0,0,e_j,e_j}|z_j|^2$$
\begin{Remark}
 The quadratic function $R$ is quasi-T\"oplitz and satisfies the bounds $\|X_R\|^T_{r,s}\leq 2 \|X_P\|^T_{r,s}$.
\end{Remark}
The generating function of our symplectic transformation, denoted by $F$, solves the ``homological equation'':
\begin{equation}\label{effehom}\{{\mathcal N},F\}= \Pi_{\leq  {K} } R- \langle R\rangle\end{equation} where
 $\Pi_{\leq  {K}}$  is the projection which collects all terms in $R$ with $|k|\leq  {K}$  and $ {K}$
is fixed to be the quasi-T\"oplitz parameter of $P,\tilde \Ome$.
It's well known (and immediate)  that $F$ is  uniquely defined by
homological equation  for those $\xi$ such that
 $\langle \ome(\xi),k\rangle + \Omega(\xi)\cdot l\neq 0$. In order to have quantitative bounds, we restrict to a set
  $\mathcal O_+$ where (see Lemma \ref{key}):
\begin{equation}\label{cantor0}
|\langle \ome(\xi),k\rangle + \Omega(\xi)\cdot l|\geq \gamma  {K} ^{-2d\td} \,,\qquad |k|\leq {K} ,\; |l|\leq 2,\; (k,l)\neq 0,
\end{equation}  where $k\in \Z^b$, $l\in \Z^{\Z_1^d}$ and $(k,l=\alpha-\beta)$  satisfy momentum conservation \eqref{mome}.
Then $H$ in the new variables is:
$$H_+:= e^{\{F,\cdot\}}H= {\mathcal N}_++P_+ $$ where ${\mathcal N}_+= {\mathcal N}+ \langle R\rangle$ and $P_+= e^{\{F,\cdot\}}H-{\mathcal N}_{+}$.

\subsection{The set $ O_+$}

The set of no-resonant parameter is defined:

\begin{Definition}\label{oppio}  $\mathcal O_+$ is defined to be the open subset of $\mathcal O$ such that:
\begin{itemize}
 \item[i)] For  all $ |k|< {K} $, $h\in \Z$, $(h,k)\neq (0,0)$.
\begin{equation}\label{zero}
|\langle \ome,k\rangle+ h|> 2\gamma {K} ^{-\tau_0}\,. \end{equation}
\item[ii)] For all  $ |k|< {K} $, $l\in\Z^{\Z_1^d}$,   such that $|l|=1$ and $l,k$ satisfy momentum conservation (i.e.  $l=\pm e_m$ with $ -\pi(k)= \pm m$):
\begin{equation}\label{first}
|\langle \ome,k\rangle +\Omega\cdot l |> 2\gamma  {K} ^{-\tau_0}.
\end{equation}  \item[iii)]For all $ |k|< {K}, |l|=2 $, such that $l,k$ satisfy momentum conservation and moreover $l\neq e_m -e_n $ or $l= e_m -e_n $ and $\max(|m|,|n|)\leq  8  {K} ^{\td}$, we set:
\begin{equation}\label{second}
|\langle \ome,k\rangle +\Omega\cdot l |> 2\gamma {K} ^{-2d\td}.
\end{equation}

\item[iv)] For all $ N$ with $  {K}\leq N\leq 2 {K}^{\td/\tau_0}$,  for all affine spaces  $[v_i,p_i]_\ell$
 in $\mathcal H_{N}$ ($1\leq \ell<d$) with $|p_\ell|< cN^{\td/4d}$ we choose a point    $m^g \in [v_i;p_i]^g_\ell$. For each
 such $m^g$ and for all  $k$ such that $|k|\leq  {K}, $ we require:
\begin{eqnarray}\label{third}
|\langle \ome,k\rangle+\Omega_{m^g }-\Omega_{n^g }|&>& 2\gamma \min( N^{-2d\tau_0},2^{-4d}|p_\ell|^{-2d})
\end{eqnarray}
where $n^g= m^g+\pi(k)$ (see Formula \eqref{patacc} for the definition of $\pi(k)$).
\end{itemize}
\end{Definition}
The set $\mathcal O_+$ is defined in order to ensure Lemma \ref{key} below.
\begin{Lemma}\label{key}
For all $\xi\in\mathcal O_+$, for all $k\in \Z^b$, $|k|\leq  {K}$ and
$l\in \Z^{\Z_1^d}$, $|l|\leq 2$ which satisfy momentum conservation,
we have
\begin{equation}\label{zero1}
|\langle \ome,k\rangle +l\cdot\Omega|\geq \gamma  {K}^{-2 d \tau_1} \,.\end{equation}
\end{Lemma}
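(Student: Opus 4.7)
The plan is to verify \eqref{zero1} by a case analysis on $(k,l)$, reducing each case either directly to one of the defining inequalities (i)--(iv) of $\mathcal O_+$, or to (iv) through a quasi-T\"oplitz transfer. If $|l|\le 1$ the bound is immediate from (i) or (ii) since $\tau_F=2d\tau\geq\tau_0$. If $|l|=2$ with $l=e_m+e_n$, or with $l=e_m-e_n$ and $\max(|m|,|n|)\le 8\K^{\tau}$, condition (iii) already gives the stronger bound $2\gamma\K^{-2d\tau}$. So only the case $l=e_m-e_n$ with $\max(|m|,|n|)>8\K^{\tau}$ requires work. Momentum conservation forces $n-m=\pi(k)=\sum_i\n^{(i)}k_i$, whence $|n-m|\le C\K$ and hence both $|m|,|n|>4\K^{\tau}$.

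For this remaining case I would locate an affine subspace $A\frec{K}[v_i;p_i]_\ell\in\mathcal H_K$, with $K$ in the range $[\K,2\K^{\tau/\tau_0}]$ prescribed by (iv), enjoying three properties: $A$ is a standard cut for $m$ of codimension $\ell<d$; $m$ lies in the good portion $A^g$; and $\pi(k)\in\langle v_1,\dots,v_\ell\rangle$. Lemma \ref{cutlemma} already furnishes a standard compatible cut of $m$ at $K=\K$ with $\ell<d$. If $\pi(k)$ happens to lie in $\langle v_i\rangle_\ell$, I use this cut; otherwise I adjoin $\pi(k)$ to the list of normal vectors and re-optimize as in the proof of Lemma \ref{cutlemma}. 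The good-portion lower bound $|(\pi(k),m)|>4K^{4d\tau_0}$ together with the trivial upper bound $|(\pi(k),m)|\le C\K|m|$ let me place $|(\pi(k),m)|$ inside one of the standard intervals $[K^{S_i},K^{S_{i+1}})$ constructed in Lemma \ref{cutlemma}, possibly at the cost of replacing $K$ by some $K'\le 2\K^{\tau/\tau_0}$. By Lemma \ref{mah} the point $n$ then shares the same cut direction, so it lies in the parallel subspace $A':=A+\pi(k)$.

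With $A$ in hand, pick the representative $m^g\in A^g$ prescribed by (iv) and set $n^g:=m^g+\pi(k)\in A'$. Since $m-m^g$ is orthogonal to $\langle v_i\rangle_\ell$ while $\pi(k)\in\langle v_i\rangle_\ell$,
\[ (|m|^2-|n|^2)-(|m^g|^2-|n^g|^2)=-2(m-m^g,\pi(k))=0. \]
Applying the quasi-T\"oplitz estimate for $\langle\tilde\Omega z,\bar z\rangle$ at parameter $\tau_1$ to the pairs $(m,m^g)\subset A^g$ and $(n,n^g)\subset A'^g$ gives $|\tilde\Omega_m-\tilde\Omega_{m^g}|,\,|\tilde\Omega_n-\tilde\Omega_{n^g}|\le CL K^{-4d\tau_1}$, hence
\[ |(\Omega_m-\Omega_n)-(\Omega_{m^g}-\Omega_{n^g})|\le CLK^{-4d\tau_1}. \]
Condition (iv) bounds the shifted quantity below by $2\gamma K^{-2d\tau_0}\ge 2\gamma\K^{-2d\tau}$; the quasi-T\"oplitz error, a strictly higher power of $K^{-1}$, is absorbed into a factor of $1/2$ for $\K$ large enough, yielding \eqref{zero1} with $\tau_F=2d\tau$.

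The main obstacle is the refinement step in the second paragraph: one must show that by iterating $K$ over $[\K,2\K^{\tau/\tau_0}]$ and at worst adjoining $\pi(k)$ to the normal vectors of the standard cut of $m$, one always produces a $K$-optimal presentation of $m$ that is simultaneously a standard cut and has $\pi(k)$ in its normal span. This is a combinatorial argument in the spirit of Lemma \ref{cutlemma}, relying on the good-portion inequalities to separate the $p_i$ into small and large clusters, and on Remark \ref{numero} to keep the total number of affine subspaces under control so that the measure loss in the definition of $\mathcal O_+$ remains summable along the iteration.
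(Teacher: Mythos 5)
Your case split for $|l|\le1$, $l=e_m+e_n$, and $l=e_m-e_n$ with $\max(|m|,|n|)\le 8\K^\tau$ matches the paper, and the algebraic identity $(|m|^2-|n|^2)-(|m^g|^2-|n^g|^2)=-2(m-m^g,\pi(k))=0$ when $\pi(k)\in\langle v_i\rangle_\ell$ is exactly the observation the paper uses. The quasi-T\"oplitz transfer to bound $|\tilde\Omega_m-\tilde\Omega_{m^g}|$ and $|\tilde\Omega_n-\tilde\Omega_{n^g}|$ is also right.

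The genuine gap is in the second paragraph, and you flag it yourself: your proposal requires finding, for each large $m$ and each $k$, an affine subspace in $\mathcal H_K$ that is simultaneously a good-portion carrier for $m$ and has $\pi(k)$ in its normal span, and you propose to achieve this by ``adjoining $\pi(k)$ to the normal vectors and re-optimizing'' — an operation you do not carry out and which would interact badly with $K$-optimality and with the finite catalogue of subspaces used in defining $\mathcal O_+$. This step is in fact unnecessary. The paper's proof uses a much simpler dichotomy: by \eqref{invar}, $\langle\ome,k\rangle+|m|^2-|n|^2 = \langle\ome,k\rangle+|\pi(k)|^2-2\langle\pi(k),m\rangle$, and if $\pi(k)\notin\langle v_i\rangle_\ell$ then, $m$ being in the good portion, $|(\pi(k),m)|>4\K^{4d\tau_0}\gg\K^3$, so the whole left-hand side of \eqref{zero1} exceeds $\K^3$ and no non-resonance condition is needed at all. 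Only when $\pi(k)\in\langle v_i\rangle_\ell$ does one fall back on (iv), and then the standard cut of $m$ supplied by Lemma \ref{cutlemma} at $K=\K$ (not a variable $K$; cf.\ the paper's remark that only $K=\K$ is used in this lemma) already produces the required subspace. You also do not treat the degenerate case $\ell=0$, where $|p_1|>\K^{4d\tau_0}$: here $m$ lies in no good portion, but the same trivial estimate applies because every $v\in B^a_\K$, in particular $\pi(k)$, pairs with $m$ to give $|(v,m)|>4\K^{4d\tau_0}$. So the structure of your argument is recoverable, but the hardest case rests on an unfinished and unnecessary re-optimization when a one-line size estimate is what is actually needed.
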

Before proving the Lemma we give some relevant notations.

We know that $\tilde\Omega(z):=\sum\limits_m \tilde\Omega_m|z_m|^2 $ is quasi-T\"oplitz quadratic and diagonal,  hence  given $\tet1,\mu,\tau$, we apply Lemma \ref{diago} with $Q(z)=\tilde \Omega(z)$
 to obtain the bounds \eqref{appr1} and \eqref{appr2} for all $m\frec{N}[v_i;p_i]$ which have a  cut  at $\ell$ with parameters $(N,\tet1,\mu,\tau)$:
 \begin{equation}\label{ometg}
  \tilde\Omega_m= {\it  \hat\Omega}([v_i;p_i]_\ell)+  {N}^{-4d\t1}\bar\Omega_m .
\end{equation}
Let us fix an affine subspace $A\frec{N}[v_i;p_i]_\ell$. By Lemma \ref{minko} there exists $\tau:=\tau(p_\ell)$ (depending only on $p_\ell$) such that every $m\in [v_i;p_i]_\ell^g$ has a cut at $\ell$ with parameters $(N,\tet1,\mu,\tau(p_\ell))$ for all $\frac12<\tet1,\mu<4$, hence:
\begin{equation}\label{omet}
  |\tilde\Omega_m- {\it  \hat\Omega}([v_i;p_i]_\ell)|<  2L  {N}^{-4d\t1(p_\ell)},
\end{equation}
  here ${\it  \hat\Omega}([v_i;p_i]_\ell)$ plays the role of $\mathcal Q(A)$ while by \eqref{prin} $L$ dominates the T\"oplitz norm of $\tilde \Omega$.
Note that in particular this relation  holds for $m^g$.

\begin{proof}{(\it Lemma \ref{key})}
The cases with $|l|=0,1$ follow trivially  from the definitions \eqref{zero} and \eqref{first} since $\td $ is  large
with respect to $\tau_0$; same for $\pm l= e_m+e_n$ and $l= e_m-e_n$
with $\max(|m|,|n|)< 8 {K}^{\td}$.


For the remaining cases we proceed in two steps: first we fix  $k$,
$N= {K}$ and  one subspace $A\frec{K}[v_i;p_i]_\ell$, we consider \eqref{third}
with this choice of $k,[v_i;p_i]_\ell$. We show that this inequality
implies that \eqref{zero1} holds for all  $l=e_m-e_n$ such that
$m\in [v_i;p_i]_\ell^g$ and $n=m+\pi(k)$.
  We prove this fact by  using  \eqref{omet}  with $N=K$. Finally Proposition \ref{key} ensures that every point $m\notin A_0$ with
$|m|>4 {K}^{\td}$  must belong to some $[v_i;p_i]^g_\ell$.

Let $m$ be any point in  $[v_i;p_i]_\ell^g$. Let us first notice
that
\begin{equation}\label{invar}
\langle\ome,k\rangle+|m|^2-|n|^2=  \langle\ome,k\rangle+|\pi(k)|^2-2\langle \pi(k),m\rangle,
\end{equation}
hence \eqref{zero1} with $l=e_m-e_n$ is surely satisfied  if $|(
\pi(k),m)|\geq  2 {K}^3$ because in that case \eqref{invar} is greater
than $ 2 {K}^3- C_1^2 {K}^2-|\ome| {K} > {K}^3$ provided that $ {K}$ is large
with respect to $C_1$ and $\ome$ .

 If on the other hand  $|( \pi(k),m)|<
2 {K}^3$, then $\pi(k)\in B_ {K}^a$ is  in $\langle v_i\rangle_\ell$,
otherwise we would have $|( \pi(k),m)|> \1  {K}^{4d\tau_0}$ by
definition of $[v_i;p_i]_\ell^g$ and recalling that $ {K}^{4d\tau_0}>
4  {K}^3$ by hypothesis. Thus for all $m\in [v_i;p_i]_\ell^g$ either
\eqref{zero1} is trivially satisfied or $$|m|^2-|n|^2=
|\pi(k)|^2-2\langle \pi(k),m\rangle=|\pi(k)|^2-2\langle\pi(k),m^g\rangle,$$ recall that $m^g$
is one fixed point in $[v_i;p_i]_\ell^g$ on which we have imposed
the non--resonance conditions \eqref{third}.

 We apply \eqref{omet} with $N=K$ to $m, m^g$ and $n= m+\pi(k),n^g= \pi(k)+m^g$. We set
$n\frec{ {K}}[w_i;q_i]$,  since
$(\mu-\1) {K}^{\t1(p_\ell)},(4- \theta) {K}^{4d\t1(p_\ell)}> 5  {K}^4$ we may apply
Lemma \ref{lintor} (with $r=n$) to
conclude that $n$ has an  $\ell$ cut $[w_i;q_i]_\ell$ with
parameters $ \tet1,\mu,\tau$. Note moreover that,  by Lemma \ref{mah} (3)
$[w_i;q_i]_\ell$ is completely fixed by $[v_i;p_i]_\ell$ and $k$. We
have
$$ |\tilde\Omega_n- {\it  \hat\Omega}([w_i;q_i]_\ell)|<2L  {K}^{-4d\t1(p_\ell)},$$ and this relation holds also for $n^g =m^g+\pi(k)$.  This implies that
$$ |\tilde\Omega_m-\tilde\Omega_n- \tilde\Omega_{m^g}+\tilde\Omega_{n^g}|\leq  8 L {K}^{-4d\t1(p_\ell)},$$
where by definition of $\t1$, $ {K}^{\t1(p_\ell)}= \max( {K}^{\tau_0},2
|p_\ell|)$ and hence:
$$ |\langle \ome,k\rangle+\Ome_m-\Ome_n|\geq |\langle \ome,k\rangle+\Ome_{m^g}-\Ome_{n^g}|- 8 L  {K}^{-4d\t1(p_\ell)}\geq$$
\begin{equation}\label{bohh2}
{\gamma\over 2} \min( {K}^{-2d \tau_0},2^{-4d} |p_\ell|^{-2d})\geq
\gamma  {K}^{-\td}.
\end{equation}

Now we may apply Proposition \ref{key2} with $N=K$ to conclude that every point $m$ with $|m|> 8 K^\td $ and $p_1<C{K}^{4d \tau_0}$ belongs to some $[v_i;p_i]_\ell^g$.
 So the measure estimates for the points $m$  which fall in this case are covered by \eqref{second}.

Finally if $m\in A_0$ of Formula \eqref{nodiv}, i.e.   If   we have $p_1>  C{K}^{4d \tau_0}$ then
$$ |\pm \langle \ome,k\rangle+\Omega_m-\Omega_n | > |\pm \langle \ome,k\rangle+|\pi(k)|^2-2(\pi(k),m) +\tilde\Ome_m-\tilde\Ome_n|> {K}^{4d\tau_0}- 2 {K}^2  $$  since $\pi(k)\in B_ {K}$ and hence $|(\pi(k),m)|> p_1$.

 We have shown that conditions {\it ii)-iv)}
in $\mathcal O^+$ imply \eqref{zero1}.
\end{proof}
\vskip15pt

\begin{remark}
This lemma essentially saying that by improving \textbf{{\em only
one}} non resonant condition \eqref{third}, we impose \textbf{all}
 the conditions
\eqref{zero1} with $l= e_m-e_n$ such that $m\in [v_i;p_i]_j^g$ and
$n=m+\pi(k)$.
\end{remark}
\begin{remark}\label{quiri}
Notice that up to now we only use  \eqref{third} and \eqref{omet} with $N=
K$. Indeed the other non--resonance conditions are only required in
order to show that the quasi--T\"oplitz property is preserved in
solving the homological equation.
\end{remark}

\begin{Lemma}\label{measure}
 The set $\mathcal O_+$  is open and  has $|\mathcal O\setminus \mathcal O_+|\leq C\gamma
 K^{-\tau_0+b+d/2}$.
\end{Lemma}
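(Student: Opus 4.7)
The plan is to establish openness directly and to estimate $|\mathcal O \setminus \mathcal O_+|$ by decomposing
$$\mathcal O \setminus \mathcal O_+ = \mathcal R_i \cup \mathcal R_{ii} \cup \mathcal R_{iii} \cup \mathcal R_{iv},$$
according to which of the four defining conditions is violated. Openness is immediate since each condition is a strict inequality in $C^1_W$ functions of $\xi$. The central analytic tool is $(A1)$: since $\xi \mapsto \omega(\xi)$ is a $C^1$ diffeomorphism with $|\nabla \omega^{-1}| \leq M$, and $(A2)$ gives $ML < 1/2$, for any expression of the form $\langle \omega, k \rangle + l \cdot \Omega + c$ with $k \neq 0$ the gradient in $\xi$ has size at least $|k|/(2M)$ in some fixed direction; consequently the resonant sublevel set $\{|\cdot| < \varepsilon\}$ has Lebesgue measure at most $CD^{b-1}\varepsilon/|k|$.

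For $(i)$, only $|h| \leq M|k| + 1$ can produce a resonance, so $|\mathcal R_i| \leq \sum_{|k|<\mathcal K} CM|k| \cdot \gamma \mathcal K^{-\tau_0}/|k| \leq C\gamma \mathcal K^{-\tau_0 + b}$. For $(ii)$, momentum conservation $\sigma m = -\sum_i \n^{(i)} k_i$ determines $m$ from $(k, \sigma)$, giving $|\mathcal R_{ii}| \leq C\gamma \mathcal K^{-\tau_0 + b - 1} \log \mathcal K$. For $(iii)$, momentum fixes $m \pm n$: when $l = e_m + e_n$ or $\pm 2 e_m$ the asymptotics $\Omega_m \sim |m|^2$ force $|m|, |n| \leq C\sqrt{\mathcal K}$, giving $\leq \mathcal K^{d/2}$ choices per $k$; when $l = e_m - e_n$ with $\max(|m|, |n|) \leq 8\mathcal K^\tau$ a crude count gives $\leq \mathcal K^{\tau d}$ pairs per $k$; in both sub-cases the very small threshold $\gamma \mathcal K^{-2d\tau}$ absorbs the counting, so $|\mathcal R_{iii}| \leq C\gamma \mathcal K^{-\tau_0 + b + d/2}$ (in fact much smaller).

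The principal obstacle is step $(iv)$, where the family of relevant affine subspaces is a priori very large. Three ingredients together make the sum convergent: Remark \ref{numero} bounds the number of $A = [v_i; p_i]_\ell \in \mathcal H_K$ with $|p_\ell| = p$ by $(CK)^{\ell d}(2p)^{\ell - 1}$; by Lemma \ref{key} we impose only \emph{one} non-resonance condition per $(A, k)$, at the distinguished point $m^g \in [v_i; p_i]_\ell^g$; and the threshold $2\gamma \min(K^{-2d\tau_0}, 2^{-4d} |p_\ell|^{-2d})$ is engineered so that $|p_\ell|^{-2d}$ beats $(2p)^{\ell - 1}$ and $K^{-2d\tau_0}$ beats $(CK)^{\ell d}\mathcal K^b$ (using $\tau_0 > d + b$ and $\tau \gg \tau_0$ from \eqref{itau}). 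Decomposing dyadically in $K \in [\mathcal K, 2\mathcal K^{\tau/\tau_0}]$, in the codimension $\ell \in \{1, \dots, d-1\}$, and in $p = |p_\ell| \in [1, K^{\tau/4d}]$, and summing the per-resonance measure $\gamma \min(K^{-2d\tau_0}, |p|^{-2d})/|k|$, I expect to obtain $|\mathcal R_{iv}| \leq C\gamma \mathcal K^{-\tau_0 + b + d/2}$. The main technical difficulty is precisely this book-keeping: verifying that the threshold decay $|p_\ell|^{-2d}$ really dominates the subspace count $(2p)^{\ell - 1}$ uniformly in $\ell$, and that the dyadic sum over $K$ converges thanks to $\tau \gg \tau_0$. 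Without the quasi-T\"oplitz framework, which reduces the infinitely many $l = e_m - e_n$ conditions per $k$ to a single representative $m^g$ per affine subspace, the sum would diverge. Combining the four estimates yields the claim.
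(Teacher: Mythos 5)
Your proposal follows the same strategy as the paper's proof: decompose $\mathcal O\setminus\mathcal O_+$ by which of the four defining conditions fails, treat $(i)$--$(iii)$ by counting the finitely many $(k,l)$ pairs allowed by momentum conservation, and handle $(iv)$ by combining the subspace count of Remark \ref{numero} with the $p$- and $K$-dependent threshold. The three ingredients you single out for $(iv)$ are exactly the ones the paper's computation rests on (the paper's sum is $\sum_{K\geq\K}\sum_{\ell}\sum_{|p_\ell|>\frac12 K^{\tau_0}}|p_\ell|^{-4d-1+d}K^{\ell d}\K^b\lesssim\gamma\K^{-d\tau_0+b}$, which your dyadic decomposition would reproduce). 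One small conceptual correction: Lemma \ref{key} is not what reduces the conditions to one per $(k,A)$ — that reduction is built directly into the definition of condition $(iv)$, which by fiat imposes the bound only at a single chosen point $m^g\in[v_i;p_i]_\ell^g$; Lemma \ref{key} is the converse statement that this one condition, together with the quasi-T\"oplitz estimate on $\tilde\Ome$, \emph{implies} the second Melnikov condition for every $m$ in the good portion. The measure lemma itself never invokes Lemma \ref{key}.
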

For the measure estimates, given $\varrho>0$ we define
$$\Ra^{\varrho}_{k,l}:= \left\{\xi\in \mathcal O \vert\; |\langle \ome,k\rangle + \Omega \cdot l|< \gamma   {K}^{-\varrho}\right\}, $$
\begin{Lemma}\label{misu}
For all $(k,l)\neq (0,0)$ $|k|\leq  {K}$ and $|l|\leq 2$, which satisfy momentum conservation,   one has
$|\Ra^{\varrho}_{k,l}|\leq C \gamma  {K}^{-\varrho}$.
\end{Lemma}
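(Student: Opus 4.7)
The plan is to find a direction in parameter space along which the small divisor
$\phi(\xi) := \langle \omega(\xi), k\rangle + \Omega(\xi) \cdot l$ has a derivative bounded below by a positive constant, and then conclude by a standard Fubini-type slicing argument on $\mathcal O \subset \R^b$.

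I first dispose of the case $k = 0$. Momentum conservation $\sum_{j \in \Z_1^d} j\,(\alpha_j - \beta_j) = 0$ together with $(k,l) \neq 0$ and $|l| \leq 2$ leaves essentially only pairs of the form $l = \pm(e_m + e_n)$ with $m + n = 0$ (the options $l = \pm e_m$, $l = \pm 2 e_m$, and $l = \pm(e_m - e_n)$ with $m \neq n$ are either incompatible with $\sum_j j\, l_j = 0$ in $\Z_1^d$ or reduce to $l = 0$). In the surviving case $\phi = \pm(2|m|^2 + \tilde\Omega_m + \tilde\Omega_{-m})$, which by $(A2)$ and $LM < 1/2$ is bounded below by a positive constant independent of $\xi$. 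For $\gamma$ small the set $\Ra^\tau_{0,l}$ is therefore empty and the estimate is trivial.

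For $k \neq 0$ I use $(A1)$: the diffeomorphism $\xi \mapsto \omega$ admits an inverse with gradient bounded by $M$, so the vector $v := (D\omega)^{-1}(k/|k|)$ satisfies $|v| \leq M$ and $\partial_v \langle \omega, k\rangle = |k|$. By $(A2)$, $|\partial_v(\Omega \cdot l)| \leq |l|\,L\,|v| \leq 2LM < 1$, and hence
$$|\partial_v \phi(\xi)| \geq |k| - 2LM \geq 1 - 2LM =: c_0 > 0$$
uniformly in $\xi \in \mathcal O$.

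With this uniform lower bound in hand the measure estimate is routine: foliate $\mathcal O$ by lines parallel to $v$. On each such line the one-dimensional Lebesgue measure of $\{\,|\phi| < \gamma \K^{-\tau}\,\}$ is at most $2\gamma \K^{-\tau}/c_0$, and integrating over the orthogonal $(b-1)$-dimensional cross-section (of diameter at most $D$) gives $|\Ra^\tau_{k,l}| \leq C \gamma \K^{-\tau}$ with $C$ depending only on $b, D, L, M$. The only mildly delicate point is the case $|k| = 1$, where the bound $|k| - 2LM$ fails to grow with $|k|$; the hypothesis $LM < 1/2$ in $(A2)$ is precisely what keeps $c_0$ positive.
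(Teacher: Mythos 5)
Your $k\neq 0$ argument is essentially the paper's proof: choose a direction so that $\partial_v\langle\omega,k\rangle=|k|$, bound $|\partial_v(\Omega\cdot l)|\leq 2LM$ via $(A2)$, get the uniform lower bound $|k|-2LM\geq 1-2LM>0$, and finish by Fubini over the diameter $D$. (The paper fixes a single vector $a$ with $\langle k,a\rangle=|k|$; your $v=(D\omega)^{-1}(k/|k|)$ is $\xi$-dependent, so the straight-line foliation is not literally correct as written---one should change to $\omega$-coordinates, which is what the paper implicitly does and which costs a Jacobian bounded by $(A1)$.)

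Where you go beyond the written proof is the explicit treatment of $k=0$, for which the direction choice $\langle k,a\rangle=|k|$ is vacuous; the paper simply does not address it. Your reduction to $l=\pm(e_m+e_n)$ with $n=-m$ is sound, but has two small gaps. First, if $0\in\Z_1^d$ (the origin is a normal site) then $l=\pm e_0$ and $l=\pm 2e_0$ also survive momentum conservation, and the corresponding divisor $\pm\tilde\Omega_0$ or $\pm 2\tilde\Omega_0$ need not be bounded below; this case has to be excluded by assuming $0$ is a tangential site. Second, to conclude $2|m|^2+\tilde\Omega_m+\tilde\Omega_{-m}\geq 2-2L>0$ you need $L<1$, and $LM<1/2$ alone does not give this---one must also use $M\geq 1/2$ (true because $M$ bounds $|\omega|_{C^1_W}$, but it should be said).
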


{\em Proof.}
By assumption $\mathcal O$ is
contained in some open set of diameter $D$.

Choose  $a$ to be a vector such that $\langle k,a\rangle=|k|$, we
have
$$|\partial_{t}(\langle k,\omega(\xi+ta) \rangle + \Omega \cdot l )|
\geq M( |k| -M L)\geq \frac M2.$$ which leads to  $$
\int_{\Ra^{\varrho}_{k,l}}d\xi\leq 2 M^{-1} \gamma    {K}^{-\varrho}
\int_{\xi+ta \cap\Ra^{\varrho}_{k,l}}  dt\int d\xi_2\dots d\xi_b \leq
2 M^{-1} D^{b-1}\gamma {K}^{-\varrho}\eqno\endproof$$ 

\begin{proof}{\it  Lemma \ref{measure}.}The first statement is trivial, indeed {\it ii)-iv)} are a
finite number of inequalities; notice that in {\it iv)} for each
$[v_i,p_i]_\ell^g$ and $k$ we  impose only  one condition by
choosing one couple $m^g,n^g$. Finally by Remark \ref{numero} there
are a finite number of $[v_i,p_i]_\ell^g$. Item {\it i)} apparently has infinitely many conditions since $h\in \Z$, however we note that all but a finite number  (i.e. $|h|< 2|\ome| K$) are trivially satisfied.

Let us prove the measure estimates; to impose  \eqref{zero} with $h=0$ we have to remove 
\begin{equation} |\cup_{|k|\leq   {K}}\Ra^{\tau_0}_{ k,0}|\leq C(b)\gamma   {K}^{-\tau_0+b}.\end{equation} For  $h\in\Z$  we set
$$\tilde\Ra^{\varrho}_{k,h}:= \left\{\xi\in \mathcal O \vert\; |\langle \ome,k\rangle + h|< \gamma   {K}^{-\varrho}\right\}, $$ and note that $\tilde\Ra^{\varrho}_{k,h}$ is empty if $|h|> 2|\ome||k|$.
 As in Lemma \ref{misu} for fixed $(k,h)$ we have $|\tilde\Ra^{\varrho}_{k,h}|\leq C \gamma  {K}^{-\varrho}$. Then
\begin{equation} |\cup_{|k|\leq   {K}, |h|\leq 2 |\ome| |k| }\tilde\Ra^{\tau_0}_{ k,h}|\leq C(b)\gamma   {K}^{-\tau_0+b+1}.\end{equation}

In order to impose  the first Melnikov condition \eqref{first} we note that  by momentum conservation in $\Ra^{\tau_0}_{k,l}$ we have $l=\pm e_{\mp \pi(k)}$. Then we have to  remove:
\begin{equation}|\cup_{|k|\leq  {K}\,,\; l = \pm e_{\mp \pi(k)}}\Ra^{\tau_0}_{k,l}| \leq C(b)\gamma   {K}^{-\tau_0+b}.\end{equation}
If $l= \pm(e_m+e_n)$ the momentum conservation fixes $ n= \mp \pi(k)-m$;  we notice that the condition
$$|\pm \langle \ome,k\rangle+ |m|^2+|n|^2+\tilde\Omega_m+\tilde\Omega_n|<\frac12 $$ implies $|\pm \langle \ome,k\rangle+ |m|^2+|n|^2|<1$ and hence
$|m|^2+|n|^2<2|\omega|  {K}$, and we have to remove a set of parameters:
\begin{equation}\quad\quad|\cup_{k\leq  {K},\atop l=\pm (e_m+e_n)}\Ra^{\tau_0}_{k,l}|=|\cup_{k\leq  {K}}\cup_{l=\pm (e_m+e_n)\atop |m|\leq  C(b) \sqrt{ {K}}\,, n=-\pi(k)-m }\Ra^{\tau_0}_{k,l}|\leq C\gamma   {K}^{-\tau_0+b +d/2}, \end{equation}
 In conclusion  one gets \eqref{zero} and \eqref{first}
 with $\tau_0>b +d/2$  and $l\neq \pm(e_m -e_n)$ by removing an open set of measure  $C\gamma   {K}^{-\tau_0+b+ d/2}$ .

One trivially has
\begin{equation}|\cup_{k\leq  {K}}\cup_{{l=\pm (e_m-e_n)\,, m- n=\mp \pi(k)\,,}\atop \max( |m|,|n|)\leq  8  {K}^{\td}}\Ra^{2d\td}_{k,l}|\leq C\gamma   {K}^{-d\td+b },\end{equation}
 so we have
\eqref{second} by removing an open set of measure  $C\gamma
 {K}^{-d\td+b }$.

In order to deal with the last case, for all natural $N$ such that $
K\leq N\leq 2 {K}^{\td/\tau_0}$, for all affine subspaces
$[v_i;p_i]_\ell$ and for all $|k|\leq  {K}$ we set
\begin{eqnarray}\label{resonant set}\Ra_{k,[v_i;p_i]^g_\ell}^N:= \{\xi\;\vert\; |\langle \ome,k\rangle+\Omega_{m^g}-\Omega_{n^g}| < 2 \gamma
\min(N^{-2d\tau_0},2^{-4d}|p_\ell|^{-2d})\}\end{eqnarray}

Following Lemma \ref{misu}, $|\Ra_{k,[v_i;p_i]^g_\ell}^N|<C\gamma \min(N^{-2d\tau_0},2^{-4d}|p_\ell|^{-2d})$. By Remark
\ref{numero}
%
%
 we have:
$$|\cup_{ {K}\leq N\leq  {K}^{\td/\tau_0}}\cup_{\ell=0,\cdots,d-1}\cup_{\1 N^{\tau_0}\leq |p_\ell|\leq 4 N^{\td\over 4d}}\cup_{[v_i;p_i]^{g}_\ell\atop |k|<  {K}} \Ra_{k,[v_i;p_i]_\ell^g}^N|$$
$$ \leq C\gamma \sum_{N\geq  {K}}\sum_{\ell=0}^{d-1}\sum_{|p_\ell|>\1 N^{\tau_0}} |p_\ell|^{-2d-1+d}N^{\ell d} {K}^{b}\leq 4^d C_2\gamma  {K}^{-d\tau_0+b}, $$
so that we have \eqref{third} by removing an open set of measure
$C\gamma  {K}^{-d\tau_0+b}$.
\end{proof}
\vskip15pt

\subsection{Quasi-T\"oplitz property}
The main proposition of our paper is following:\begin{Proposition}\label{pro:1} The functions $P_+$, $\tilde\Ome^+|z|^2$  are quasi-T\"oplitz with parameters $( {K}_+, \theta_+,\mu_+)$ such that:
$$  4 {K}_+< \sqrt{(\mu-\mu_+) }( {K}_+)^{3/2}, \quad 4\mu_+ {K}_+^{4}< ( \theta_+- \theta) {K}_+^{4d\tau_0-1}.$$
\end{Proposition}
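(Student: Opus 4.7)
The plan is to expand $P_+ = e^{\{F,\cdot\}}(N+P) - N_+$ by the standard Lie series and verify the quasi-T\"oplitz property on each constituent. The original $P$ is quasi-T\"oplitz by hypothesis, and the high-Fourier tail $R - \Pi_{\leq\mathcal K}R$ is exponentially small in $\mathcal K$ by Cauchy estimates, so the new content of $P_+$ lives in the iterated Poisson brackets of $F$ with $\Pi_{\leq\mathcal K}R - \langle R\rangle$ and with $P$. Accordingly the task reduces to two assertions: (i) the solution $F$ of the homological equation \eqref{effehom} is quasi-T\"oplitz with parameters $(\mathcal K,\lambda,\mu)$; (ii) the Poisson bracket of two quasi-T\"oplitz functions is quasi-T\"oplitz with the slightly degraded parameters $(\mathcal K_+,\lambda_+,\mu_+)$.

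For step (i), fix a $(\mathcal K,\lambda,\mu,\tau_1)$-bilinear monomial in $R$ with cut $[v_i;p_i]_\ell$ and normal exponent $l = e_m - e_n$; the corresponding coefficient of $F$ is that of $R$ divided by $\ii(\langle\omega,k\rangle + \Omega_m - \Omega_n)$. Arguing as in the proof of Lemma \ref{key}, momentum conservation together with $m\in [v_i;p_i]_\ell^g$ forces $\pi(k)\in\langle v_i\rangle_\ell$, so $|m|^2 - |n|^2 = |\pi(k)|^2 - 2\langle\pi(k),m\rangle$ is constant on the affine subspace; combined with the quasi-T\"oplitz structure of $\tilde\Omega$ this yields
$$
\bigl|(\langle\omega,k\rangle + \Omega_m - \Omega_n) - (\langle\omega,k\rangle + \Omega_{m^g} - \Omega_{n^g})\bigr| \leq 4L\mathcal K^{-4d\tau_1}.
$$
Pairing this with the lower bound $\gamma\min(\mathcal K^{-4d\tau_0},2^{-4d}|p_\ell|^{-4d})$ from \eqref{third}, and the rescaling by $\mathcal K^{4d\tau_1}$ appearing in \eqref{limi}, I define the T\"oplitz part of $F$ by replacing $\Omega_m,\Omega_n$ with $\Omega_{m^g},\Omega_{n^g}$ in the denominator; the remainder is then bounded in the quasi-T\"oplitz norm by $\|X_R\|_{r,s}^T/\gamma$ times a constant.

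For step (ii), I decompose each factor as T\"oplitz plus quasi-T\"oplitz remainder and expand the bracket. The key structural ingredient is Lemma \ref{mah}(i): any index pair $(m,n)$ appearing in a $(\mathcal K_+,\lambda_+,\mu_+,\tau_1)$-bilinear monomial of $\{F,G\}$ is automatically on a $(\mathcal K,\lambda,\mu,\tau_1)$ cut on the \emph{same} affine subspace, provided the shifts produced by the bracket --- namely the widening of the ambient ball from $B^a_{\mathcal K}$ to $B^a_{\mathcal K_+}$ and the total-momentum bound $\mu_+\mathcal K_+^3$ from Definition \ref{taubilinear} --- are absorbed by the parameter gaps $(\mu-\mu_+)\mathcal K_+^{\tau_1}$ and $(\lambda_+-\lambda)\mathcal K_+^{4d\tau_1}$. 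Taking $\tau_1 = \tau_0$ (the most restrictive case) this is exactly the quantitative form of the stated hypotheses $4\mathcal K_+ < \sqrt{\mu-\mu_+}\,\mathcal K_+^{3/2}$ and $4\mu_+\mathcal K_+^4 < (\lambda_+-\lambda)\mathcal K_+^{4d\tau_0-1}$. The T\"oplitz part of $\{F,G\}$ is then the bracket of the matched T\"oplitz parts on the common subspace, and the remainder is controlled via Cauchy estimates from Lemma \ref{cauchy}.

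Finally, $\tilde\Omega_n^+ = \tilde\Omega_n + P_{0,0,e_n,e_n}$, so the quasi-T\"oplitz of $\sum_n\tilde\Omega_n^+|z_n|^2$ follows from the hypothesis on $\tilde\Omega$ together with the $\Pi_{\mathcal S}$-stability of the quasi-T\"oplitz norm (Lemma \ref{cauchy}(i)) applied to the $|z_n|^2$-projection of $P$. The main obstacle is step (i): it is the only place where all four non-resonance conditions defining $\mathcal O_+$ (in particular the uniform lower bound \eqref{third}) enter, and division by a small divisor that genuinely depends on both $m$ and $n$ could in principle destroy any T\"oplitz structure inherited from $R$. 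What saves us is precisely the quasi-T\"oplitz of $\tilde\Omega$ and the cut geometry, which together make the divisor nearly constant on each affine subspace so that the ratio preserves the quasi-T\"oplitz class.
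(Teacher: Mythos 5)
Your proposal follows the same architecture as the paper: expand $P_+$ via the Lie series and reduce to (i) the statement that the generating function $F$ is quasi-T\"oplitz with the inherited parameters $(\mathcal K,\lambda,\mu)$ and (ii) the statement that Poisson brackets preserve the quasi-T\"oplitz class with the degraded parameters $(\mathcal K_+,\lambda_+,\mu_+)$. These are precisely the paper's Propositions \ref{submain} and \ref{main}, and your reduction of the $\tilde\Omega^+$ claim to the $\Pi_{\mathcal S}$-stability of the norm applied to the diagonal quadratic part of $P$ is likewise the paper's route. The identification of the stated parameter inequalities with the quantitative form of Lemma \ref{mah}(i) is also correct.

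Two imprecisions in your step (i), neither destroying the approach but both needing repair. First, it is not true that $m\in [v_i;p_i]_\ell^g$ together with momentum conservation \emph{forces} $\pi(k)\in\langle v_i\rangle_\ell$; the definition of the good portion \eqref{pippo} yields a dichotomy: either $\pi(k)\in\langle v_i\rangle_\ell$ (the case you treat), or $|\langle\pi(k),m\rangle|$ is so large that the small divisor is automatically bounded below by order $K^{4d\tau_1}$, in which case the T\"oplitz approximation of that coefficient is simply taken to be zero (likewise for $l=e_m+e_n$, which you do not mention). Second, you implicitly argue at $K=\mathcal K$, but the quasi-T\"oplitz norm \eqref{unobisbis} is a supremum over \emph{all} $K\geq\mathcal K$: condition \eqref{third} is only imposed for $\mathcal K\leq K\leq 2\mathcal K^{\tau/\tau_0}$, and for larger $K$ one must instead return to the $\mathcal K$-level standard cut of $m$ and invoke the already-established bound \eqref{bohh2}, which is the case distinction carried out in Proposition \ref{smaldiv}. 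This second point is not cosmetic: it is the source of the $\mathcal K^{2\tau^2/\tau_0}$ loss in \eqref{xxx}, hence of the exponent appearing in the recursion for $\varepsilon_+$.
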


The key of our strategy is based on the following three propositions
which are proved in the appendix.


\begin{Proposition}\label{smaldiv}
For any  $N\geq  {K}$, $k\in \Z^b$ with $|k|< {K}$ and for all
$|m|,|n|\geq  \theta N^\td$ such that $m-n=-\pi(k)$, $m\frec{N}
[v_i;p_i]$, $n\frec{N} [w_i;q_i]$ and $m,n$ have a $\ell$
cut with parameters $ \tet1,\mu,\t1$ for some choice of $\ell,\t1$
one has
$$ |\langle \ome,k\rangle+|m|^2-|n|^2+ \it\hat\Omega([v_i;p_i]_\ell)- \it\hat\Omega([w_i;q_i]_\ell)|=\qquad\quad\qquad\qquad\qquad\qquad\qquad\qquad\qquad\qquad$$ $$
|\langle \ome,k\rangle+|\pi(k)|^2-2\langle \pi(k),m\rangle
+ \it\hat\Omega([v_i;p_i]_\ell)- \it\hat\Omega([w_i;q_i]_\ell)|\geq\qquad\qquad\qquad$$
$$\left\{\begin{array} {l} \gamma {K}^{-2d\td\t1/\tau_0} \,,\;
\pi(k)\in \langle v_i\rangle_\ell\\ \1N^{4d\t1}\,,\quad {\rm
otherwise} \end{array}\right.,\qquad\qquad\qquad\qquad\qquad\qquad$$
where ${\it \hat\Ome}([v_i;p_i]_\ell)$ and
${\it\hat\Ome}([w_i;q_i]_\ell)$ are defined by Formula
\eqref{ometg}.
\end{Proposition}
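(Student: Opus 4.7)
\emph{Proof plan.} The equality on the first line is an immediate algebraic consequence of momentum conservation $m-n=-\pi(k)$: from $|m|^2-|n|^2=\langle m+n,m-n\rangle$ together with $n=m+\pi(k)$ one derives (up to sign) $|m|^2-|n|^2=|\pi(k)|^2-2\langle\pi(k),m\rangle$. The quantitative lower bound then splits naturally into two cases depending on whether $\pi(k)$ lies in $\langle v_1,\dots,v_\ell\rangle$.

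\medskip

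\emph{Case 1: $\pi(k)\notin\langle v_i\rangle_\ell$.} This is the easy case, where the geometry of the optimal presentation alone forces a large gap. Since $|\pi(k)|\leq C_1|k|<C_1\mathcal{K}\leq C_1 K$, the vector $\pi(k)$ lies in $B_K^a\setminus\langle v_i\rangle_\ell$, so the $K$-optimality inequality \eqref{va2} gives $|\langle\pi(k),m\rangle|\geq |p_{\ell+1}|>\lambda K^{4d\tau_1}\geq \frac{1}{2}K^{4d\tau_1}$. All other contributions on the left hand side ($\langle\omega,k\rangle$, $|\pi(k)|^2$, and the two $\tilde\Omega$-terms, each uniformly bounded by $L$) grow at most polynomially in $\mathcal{K}$ and are negligible against $K^{4d\tau_1}$ because $\tau_1\geq\tau_0>12$. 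A triangle inequality then delivers the claimed $\frac{1}{2}K^{4d\tau_1}$.

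\medskip

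\emph{Case 2: $\pi(k)\in\langle v_i\rangle_\ell$.} This is the substantive case, and the one that makes essential use of the quasi--T\"oplitz hypothesis $(A4)$. By Lemma \ref{mah}(ii), since $m$ and $n$ share the same cut parameters $(\lambda,\mu)$ and $m-n=-\pi(k)$, one has $\langle v_i\rangle_\ell=\langle w_i\rangle_\ell$, so $\pi(k)$ sits in both spans. Expanding $\pi(k)=\sum_i c_i v_i$ gives $\langle\pi(k),m\rangle=\sum_i c_i p_i$, an invariant of the affine slice $[v_i;p_i]_\ell$; consequently $|m|^2-|n|^2$ is constant as $m$ ranges over $[v_i;p_i]_\ell$ with $n$ moving correspondingly through $[w_i;q_i]_\ell$. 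I would then reduce to the representative $m^g\in[v_i;p_i]_\ell^g$, with $n^g=m^g+\pi(k)$, selected in the definition of $\mathcal O_+$. Assembling (i) the identity $|m|^2-|n|^2=|m^g|^2-|n^g|^2$, (ii) the quasi--T\"oplitz approximations $|\tilde\Omega_{m^g}-\tilde\Omega([v_i;p_i]_\ell)|,\,|\tilde\Omega_{n^g}-\tilde\Omega([w_i;q_i]_\ell)|\leq L\,K^{-4d\tau_1}$ read off from \eqref{omet}, and (iii) the second Melnikov condition \eqref{third}, namely $|\langle\omega,k\rangle+\Omega_{m^g}-\Omega_{n^g}|\geq 2\gamma\min(K^{-4d\tau_0},\,2^{-4d}|p_\ell|^{-4d})$, a single triangle inequality absorbs the quasi--T\"oplitz error into half of the non-resonance gap (using $LM<\frac{1}{2}$) and produces $\gamma\min(K^{-4d\tau_0},\,2^{-4d}|p_\ell|^{-4d})$. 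Converting via $K\leq 2\mathcal{K}^{\tau/\tau_0}$ and $|p_\ell|<\mu K^{\tau_1}$ then yields the stated exponent.

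\medskip

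\emph{Main obstacle.} The conceptual heart and the main place for error is Case 2: the quasi--T\"oplitz philosophy is precisely that, when $\pi(k)$ lies in the span carved out by the small $v_i$-equations, $\langle\pi(k),m\rangle$ is \emph{exactly} constant on the affine slice while $\tilde\Omega_m$ is constant on that slice \emph{up to} a $K^{-4d\tau_1}$ correction; together these two facts let the potentially infinite family of second Melnikov conditions on the slice collapse into the single one imposed on $m^g$ in \eqref{third}. The estimate that has to be checked carefully is that the quasi--T\"oplitz error $LK^{-4d\tau_1}$ is genuinely smaller than the non-resonance gap $\gamma\min(K^{-4d\tau_0},\,2^{-4d}|p_\ell|^{-4d})$; this follows from the cut inequality $|p_\ell|<\mu K^{\tau_1}$ (which forces $|p_\ell|^{-4d}\gtrsim K^{-4d\tau_1}$) combined with the smallness $LM<\frac{1}{2}$ from hypothesis $(A2)$.
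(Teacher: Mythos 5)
Your overall strategy matches the paper's, and your Case 1 and the constancy-of-$\langle\pi(k),m\rangle$ observation in Case 2 are both correct and are exactly what the paper does. But there is a genuine gap in Case 2.

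You silently assume $K\leq 2\mathcal{K}^{\tau/\tau_0}$ when you write \emph{``Converting via $K\leq 2\mathcal{K}^{\tau/\tau_0}$''}, and this assumption is essential to your argument: the representative $m^g\in[v_i;p_i]_\ell^g$ that appears in the measure condition \eqref{third} (item (iv) of the definition of $\mathcal O_+$) is only defined for $\mathcal K\leq K\leq 2\mathcal K^{\tau/\tau_0}$. The proposition, however, is stated for \emph{all} $K\geq\mathcal K$. For $K$ beyond the range $2\mathcal K^{\tau/\tau_0}$ there simply is no condition \eqref{third} to invoke, so your reduction ``reduce to the representative $m^g$ selected in the definition of $\mathcal O_+$'' has nothing to reduce to. The paper handles this with a sub-split inside Case 2: when $2^{4d}\mathcal K^\tau>K^{\tau_0}$ (forcing $K\lesssim\mathcal K^{\tau/\tau_0}$) it argues as you do; when $2^{4d}\mathcal K^\tau<K^{\tau_0}$ it instead passes to the $\mathcal K$-optimal presentation $m\frec{\mathcal K}[v'_i;p'_i]$, uses the \emph{standard} cut of $m$ (Lemma \ref{cutlemma}) to place $m\in[v'_i;p'_i]^g_{\bar\ell}$, and invokes the bound $|\langle\omega,k\rangle+\Omega_m-\Omega_n|\geq\gamma\min(\mathcal K^{-4d\tau_0},2^{-4d}|p'_{\bar\ell}|^{-4d})\geq\gamma\mathcal K^{-\tau}$ already established in Lemma \ref{key} via \eqref{bohh2}. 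For such large $K$ the quasi-T\"oplitz error $2LK^{-4d\tau_1}\leq 2LK^{-4d\tau_0}<2L\,2^{-16d^2}\mathcal K^{-4d\tau}$ is negligible against $\gamma\mathcal K^{-\tau}$, which saves the estimate. You should add this second subcase; without it the claim is unproved for large $K$, and large $K$ is exactly the regime that matters in the quasi-T\"oplitz framework.

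A secondary, smaller point: even once you have a lower bound $\gamma\min(K^{-4d\tau_0},2^{-4d}|p_\ell|^{-4d})-2LK^{-4d\tau_1}$, converting it into the stated exponent requires the inequality $|p_\ell|<\mu K^{\tau_1}$ \emph{and} the upper bound on $K$; you mention the first but gloss over the conversion, which is where the $\tau/\tau_0$ factor in the exponent comes from. It is worth writing this chain out, since the final exponent $\mathcal K^{-2d\tau\tau_1/\tau_0}$ is the quantity that gets fed into the homological-equation estimate in Proposition \ref{submain}.
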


\begin{Proposition}\label{submain}
For $\xi\in \mathcal O_+$, the solution of the homological equation
$F$ is quasi-T\"oplitz for  parameters $( {K}, \tet1,\mu)$,
moreover one has the bound
\begin{equation}\label{pullo}\|X_F\|^T_{r,s} \leq C\gamma^{-2} {K}^{3\td^2/\tau_0} \|X_P\|_{r,s}^T\,,\end{equation} where $C$ is some constant.
\end{Proposition}

Analytic quasi-T\"oplitz functions are closed under Poisson bracket.
More precisely:
\begin{Proposition}\label{main}
Given  $f^{(1)},f^{(2)}\in \A_{ r,s}$, quasi-T\"oplitz with parameters $( {K}, \tet1,\mu)$    we have that   $\{f^{(1)},f^{(2)}\}\in \A_{r',s'}$,  is quasi-T\"oplitz for all  parameters $( {K}', \theta',\mu')$ such that  ${  {K}'}, \theta',\mu',r',s'$ satisfy:
\begin{equation}\label{pois1}
\frac{1}{( {K}')^2}\leq {(\mu-\mu') }, \quad \frac{2\mu' }{( {K}')^{4d \tau_0-4}}< ( \theta'- \theta)\,,\quad  e^{-(s-s') {K}'}( {K}')^\td<1
\end{equation} We have the bounds
\begin{equation}\label{poisbound}
 \| X_{\{f^{(1)},f^{(2)}\}}\|^T_{r',s'} \leq C_1\del^{-1} \|X_{f^{(1)}}\|^T_{r,s}\|X_{f^{(2)}}\|^T_{r,s}
\end{equation}  where $\delta= (\frac{r'}r)^2 \min( s-s',1-\frac rr')$
\vskip5pt

(ii) Given $f^{(1)},f^{(2)}$  as in item (i), with  $C_1 e\|X_{f^{(1)}}\|_{r,s}^T\del^{-1}\ll1 $, the function  $f^{(2)}\circ \phi_{f^{(1)}}^t:= e^{t \{f^{(1)},\cdot\}}f^{(2)}$, for $t\leq 1$, is  quasi-T\"oplitz in $\D(r',s')$ for all parameters $({  {K}'}, \theta',\mu')$ such that
\begin{equation}\label{boh}\quad\frac{(\ln {K} ')^2}{( {K}')^2}\leq {(\mu-\mu') }, \quad \frac{2\mu' (\ln {K} ')^2}{( {K}')^{4d \tau_0-4}}< ( \theta'- \theta)\,,\quad  e^{-(s-s')\frac{ {K}'}{(\ln {K}')^2}}( {K}')^\td<1,
\end{equation} we have the bounds:
$$ \|X_{f^{(2)}\circ \phi_{f^{(1)}}^t}\|^T_{r',s'} \leq (1- C_1 e\del^{-1}\|X_{f^{(1)}}\|_{r,s}^T)^{-1}\|X_{f^{(2)}}\|_{r,s}^T$$
\end{Proposition}


\section{Estimate  and KAM Iteration}

\subsection{Estimate on the coordinate transformation}\label{4.2}

 \sss We estimate  $X_F$ and $\phi_F^1$  where $F$ is given by \eqref{effehom}.
\begin{Lemma}\label{Lem4.3}
Let $D_i=D(\frac i4r, s_++\frac{i}4 (s-s_+) )$, $0 <i \le 4$. Then
\begin{equation}\label{4.20}
\|X_F\|_{D_3\times \mathcal O_+}\le c\gamma ^{-2} K^{4d\td}\varepsilon\,,\quad  \|X_F\|_{D_3\times \mathcal O_+}^T\le
C\gamma ^{-2} {K}^{3\td^2/\tau_0}\varepsilon
\end{equation}
\end{Lemma}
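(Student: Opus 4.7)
The plan is to construct $F$ term by term from the Fourier--Taylor expansion of $R$ using the fact that the homological equation \eqref{effehom} is diagonal in this basis, bound each coefficient using the small divisor control on $\mathcal{O}_+$, and then sum against the weighted norm of \eqref{2.3}--\eqref{2.6}. Since $R$ is truncated with $|k|\le\mathcal K$ and $2|p|+|\alpha|+|\beta|\le 2$, the homological equation gives
$$F_{k,p,\alpha,\beta}=\frac{R_{k,p,\alpha,\beta}}{i(\langle\omega,k\rangle+\Omega\cdot(\alpha-\beta))}$$
for all indices with $(k,\alpha-\beta)\ne 0$, and $F_{0,p,e_j,e_j}=0$ (these terms are absorbed into $\langle R\rangle$). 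Momentum conservation inherited from $P$ ensures only the indices covered by Lemma \ref{key} actually contribute.

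I would next exploit Lemma \ref{key}, which gives on $\mathcal{O}_+$ the lower bound $|\langle\omega,k\rangle+\Omega\cdot(\alpha-\beta)|\ge \gamma\mathcal K^{-\tau_F}$ with $\tau_F=2d\tau$. The Whitney $C^1_W$ derivative in $\xi$ costs a second power of this divisor: differentiating and using $(A1)$--$(A2)$ to bound $|\partial_\xi(\langle\omega,k\rangle+\Omega\cdot l)|\le M(|k|+ML|\alpha-\beta|)\le 2M\mathcal K$, the quotient rule yields
$$|F_{k,p,\alpha,\beta}|_{\mathcal O_+}\le C\gamma^{-2}\mathcal K^{2\tau_F+1}|R_{k,p,\alpha,\beta}|_{\mathcal O}.$$
Multiplying by the weights $r^{2|p|}e^{|k|s_3}|z^\alpha||\bar z^\beta|$ and summing, the finiteness of the $k$-sum and the inequality $s_3<s$ make the weighted sum dominate the corresponding one for $R$; converting to $\|X_F\|$ via \eqref{2.6} introduces only constants depending on the fixed ratio of $D_3\subset D(r,s)$. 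Combined with $\|X_R\|_{r,s}\le\|X_P\|_{r,s}\le\varepsilon$ (which follows from the definition of $R$ and Lemma \ref{cauchy}(i)), this gives the first inequality in \eqref{4.20}, absorbing $\mathcal K^{2\tau_F+1}$ into $c\mathcal K^{4d\tau}$.

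The second inequality in \eqref{4.20} is precisely the content of Proposition \ref{submain} applied to $P$: the hypothesis $\|X_P\|^T_{r,s}\le\varepsilon$ of $(A4)$ yields $\|X_F\|^T_{r,s}\le C\gamma^{-2}\mathcal K^{2\tau^2/\tau_0}\varepsilon$, and restriction from $D(r,s)$ to $D_3$ does not increase the norm.

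The main obstacle is not the analytic bound, which is a direct Cauchy-type estimate once Lemma \ref{key} is available, but rather the quasi-T\"oplitz bound from Proposition \ref{submain}. That proposition requires showing that after division by small denominators $F$ can still be written as $\mathcal F+\mathcal K^{-4d\tau_1}\bar F_{\tau_1}$ with $\mathcal F\in\mathbb F(\tau_1,\mathcal K)$; the key ingredient is Proposition \ref{smaldiv}, which asserts that whenever $\pi(k)\in\langle v_i\rangle_\ell$ the divisor itself depends only on the cut $[v_i;p_i]_\ell$ up to an error $O(\mathcal K^{-4d\tau_1})$, so that the ratio of two quasi-T\"oplitz objects inherits the quasi-T\"oplitz structure. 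In the opposite case the divisor is bounded below by $\tfrac12 K^{4d\tau_1}$ and the corresponding term is already of the required size to be placed into $\bar F_{\tau_1}$.
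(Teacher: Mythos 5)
Your proposal takes essentially the same route as the paper: for the quasi-T\"oplitz bound both of you invoke Proposition \ref{submain} directly (the paper's citation of ``Proposition \ref{pro:1}'' there is in fact a typo for Proposition \ref{submain}, as formula \eqref{xxx} appears in the latter), and for the analytic bound the paper simply defers to Lemma 4.2--4.3 of \cite{GY3}, whereas you spell out the standard diagonal division by small denominators, Lemma \ref{key}, and the quotient-rule treatment of the Whitney $\xi$-derivative --- which is precisely the content of the cited lemmas. The only slight imprecision is that the quotient rule gives a power $\K^{2\tau_F+1}=\K^{4d\tau+1}$ rather than $\K^{4d\tau}$, a one-power-of-$\K$ discrepancy of the sort routinely absorbed in the iteration and not a genuine gap.
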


\begin{Lemma}\label{Lem4.4}
Let  $\eta=\varepsilon^{\frac 13}, D_{i\eta}= D(\frac i4 \eta
r,s_++\frac {i}4(s-s_+)), 0 <i \le 4$. If $\varepsilon\ll (\frac
12\gamma^2  {K}^{-3\td^2/\tau_0})^{3}$, we then have that
\begin{equation}
\phi_F^t:  D_{2\eta} \to  D_{3\eta} ,\ \ \ -1 \le t\le 1,
\label{4.26}
\end{equation} is an analytic map,
 moreover,
\begin{equation}
\|\phi_F^t(z)-(z)\|_{D_{1\eta}\times \mathcal O_+}\le C\gamma
^{-2} {K}^{4d\td}\varepsilon^{1/3}\,,\quad
\label{4.27}
\end{equation}
\end{Lemma}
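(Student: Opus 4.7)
The plan is to combine the vector--field estimate from Lemma \ref{Lem4.3} with standard Hamiltonian flow analysis; for the quasi--T\"oplitz part of \eqref{4.27} I will invoke the closure of the quasi--T\"oplitz class under iterated Poisson brackets proved in Proposition \ref{main}(ii).

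First, I would check \eqref{4.26}. By Lemma \ref{Lem4.3}, $\|X_F\|_{D_3,\mathcal O_+}\le C\gamma^{-2}\mathcal K^{2\tau^2/\tau_0}\varepsilon$, and the smallness assumption $\varepsilon\ll(\tfrac12\gamma^2\mathcal K^{-2\tau^2/\tau_0})^{3/2}$ combined with $\eta=\varepsilon^{1/3}$ gives
$$\|X_F\|_{D_3,\mathcal O_+}\le C\gamma^{-2}\mathcal K^{2\tau^2/\tau_0}\varepsilon \ll \eta,$$
so $\|X_F\|$ is much smaller than the weighted distance between $D_{2\eta}$ and $D_{3\eta}$, which scales like a power of $\eta$ (and of $s-s_+$). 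Integrating the ODE $\dot x=X_F(x)$ from $x(0)\in D_{2\eta}$, a standard continuity/bootstrap argument then shows that for $|t|\le 1$ the trajectory stays in $D_{3\eta}$, establishing \eqref{4.26}.

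Next, for the analytic bound in \eqref{4.27}, differentiate the flow equation to get $\partial_t D\phi_F^t = DX_F(\phi_F^t)\cdot D\phi_F^t$ with $D\phi_F^0=\mathrm{Id}$, whence
$$D\phi_F^t - \mathrm{Id} = \int_0^t DX_F(\phi_F^\sigma)\,D\phi_F^\sigma\,d\sigma.$$
Cauchy's inequality on the gap of size $O(\eta)$ between $D_{1\eta}$ and $D_{2\eta}$ yields $\|DX_F\|_{D_{1\eta}}\le C\eta^{-1}\|X_F\|_{D_{2\eta}}$, and since $\|DX_F\|\cdot 1\ll 1$ by the hypothesis, a Gronwall argument gives $\|D\phi_F^t-\mathrm{Id}\|_{D_{1\eta}}\le 2\|DX_F\|_{D_{1\eta}}$, which after absorbing the Cauchy constant and using Lemma \ref{Lem4.3} gives the stated bound $C\gamma^{-2}\mathcal K^{4d\tau}\varepsilon$. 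For the quasi--T\"oplitz estimate I apply Proposition \ref{main}(ii) to the coordinate functions: each component $g\in\{I_i,\theta_i,z_n,\bar z_n\}$ satisfies $g\circ\phi_F^t = e^{t\{F,\cdot\}}g$, and by Proposition \ref{main}(ii) this composition is quasi--T\"oplitz on the intermediate domain with parameters $(\mathcal K',\lambda',\mu')$ obeying \eqref{boh}. Taking the derivative amounts to truncating to the first order in the Lie series, the remainder being controlled by the geometric series in $\|X_F\|^T\delta^{-1}$; combined with one more Cauchy estimate this yields $\|D\phi_F^t-\mathrm{Id}\|^T_{D_{1\eta}}\le C\|X_F\|^T_{D_{2\eta}}\,\eta^{-1}\le C\gamma^{-2}\mathcal K^{2\tau^2/\tau_0}\varepsilon$.

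The main obstacle is verifying the parameter conditions \eqref{boh} all along the Lie series: one must track $(\mathcal K_j,\lambda_j,\mu_j)$ through successive applications of $\{F,\cdot\}$ and ensure a geometric decay of the quasi--T\"oplitz norms. The crucial ingredient is the combination of the bound \eqref{xxx} from Proposition \ref{submain} with the choice $\eta=\varepsilon^{1/3}$, which together with the hypothesis $\varepsilon\ll(\tfrac12\gamma^2\mathcal K^{-2\tau^2/\tau_0})^{3/2}$ delivers precisely $\|X_F\|^T\eta^{-1}\ll 1$. This is the smallness needed to make the Lie series converge in the quasi--T\"oplitz norm on the target domain; the rest is routine bookkeeping.
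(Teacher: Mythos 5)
The first part of your argument (the flow containment \eqref{4.26} and the analytic bound in \eqref{4.27}) is standard and matches what the paper does, which is to defer to Lemmas 4.2--4.3 of \cite{GY3}. Two small quibbles: you cite Lemma \ref{Lem4.3} as giving $\|X_F\|_{D_3,\mathcal O_+}\le C\gamma^{-2}\mathcal K^{2\tau^2/\tau_0}\varepsilon$ for the plain weighted norm, but that exponent belongs to the quasi-T\"oplitz bound; the plain bound from Lemma \ref{Lem4.3} is $\|X_F\|_{D_3}\le c\gamma^{-2}\mathcal K^{4d\tau}\varepsilon$ (harmless here because $2\tau^2/\tau_0>4d\tau$, but worth being precise). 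Also the weighted distance from $D_{2\eta}$ to $D_{3\eta}$ should be tracked component by component in the weighted metric \eqref{2.6}, not just ``a power of $\eta$.''

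The genuine gap is in your treatment of the quasi-T\"oplitz bound $\|D\phi_F^t-\mathrm{Id}\|^T_{D_{1\eta}}$. You invoke Proposition \ref{main}(ii) on the coordinate functions $g\in\{I_i,\theta_i,z_n,\bar z_n\}$, but the quasi-T\"oplitz norm of Definition \ref{topbis} is built on the $(K,\lambda,\mu,\tau_1)$-bilinear projection $\Pi_{(K,\lambda,\mu,\tau_1)}$, which acts trivially (gives zero) on degree-zero and degree-one monomials in $z,\bar z$. So for $g=I_i,z_n,\bar z_n$ the quasi-T\"oplitz norm of $g$ reduces to the ordinary vector-field norm and Proposition \ref{main}(ii) yields nothing beyond the first bound in \eqref{4.27}; and $\theta_i$ is not even an element of $\A_{r,s}$ (it is multivalued on the torus and fails the analyticity/momentum-conservation requirements). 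The paper itself does not supply a direct proof of the second inequality in \eqref{4.27}: the explicit quasi-T\"oplitz argument after the two lemmas is carried out only for Lemma \ref{Lem4.3}, and in the subsequent ``Estimate of the new perturbation'' the quasi-T\"oplitz property of $P_+$ is established by applying Proposition \ref{main}(ii) directly to the Hamiltonian pieces $R(t)\circ\phi^t_F$ and $(P-\Pi_{\le\K}R)\circ\phi^1_F$, bypassing any quasi-T\"oplitz bound on the differential of the flow. So if you want to justify the second bound in \eqref{4.27} you should either specify what $\|\cdot\|^T$ is supposed to mean for the operator $D\phi_F^t-\mathrm{Id}$ (e.g.\ as a quantitative statement about how much the quasi-T\"oplitz norm of a composed Hamiltonian $H\circ\phi_F^t$ can grow), and derive it from Proposition \ref{main}(ii) applied to $H$ rather than to coordinate functions; or acknowledge that this bound is used only through Proposition \ref{main}(ii) in the next subsection and therefore doesn't need an independent proof.
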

\begin{proof}
We first notice that
$$ \| X_F\|_{3\eta}^T \leq c' \eta^{-2} \|X_F\|_{D_3\times \mathcal O_+}^T \leq C \varepsilon^{-2/3}\gamma ^{-2} {K}^{3\td^2/\tau_0} \varepsilon <1$$
by our smallness assumption.
Let us denote by ${\mathcal B}_{2\eta}$ the space of close to identity analytic symplectic maps $D_{2\eta} \to  \C^{2b}\times \ell_{\rho}$ with finite norm \eqref{norvec}. Similarly we call ${\cal C}\big( [0,1], {\mathcal B}_{2\eta} \big)$
 the Banach space
of all continuous functions $t\mapsto \phi^t$ from $[0,1]$ to
${\mathcal B}_{2\eta}$
  endowed with the norm $\sup_{t\in [0,1]}\|\cdot\|_{2\eta}.$ Consider the ball of radius $\rho:=2\|X_F\|_{3\eta}<1$ and centered in $\phi^0= id$. For $\phi^t$ in such ball consider the map
   \begin{equation}\label{cirdan}
P(\phi^t):=id+\int_0^t X_F\circ \phi^s ds
\end{equation}
It is simple to see that the above map is  a contraction, in particular
$$
\sup_{t\in[0,1]}\Big\| \int_0^t X_F\circ \phi^s ds \Big\|_{2\eta}
\leq  \sup_{t\in[0,1]}\|X_F\circ \phi^t \|_{2\eta}
\leq (1+ \rho) \|X_F\|_{3\eta}
\leq \rho\,,
$$
The Lemma follows since the  Hamiltonian flow $\phi_F^t$ generated by $F$ at time $t\in[0,1]$
is found as the fixed point of $P$.
\end{proof}

\subsection{Estimate of the new perturbation}

The symplectic map $\phi_F^1$ defined above transforms $H$
into $H_+={\mathcal N}_+ + P_+$,  where ${\mathcal N}_+={\mathcal N}+\langle R \rangle$ and
 \begin{eqnarray} P_+&=&\int_0^1 (1-t)\{\{{\mathcal N},F\},F\}\circ \phi_F^{t}dt+\int_0^1 \{\Pi_{\leq  {K}}R,F\}\circ \phi_F^{t}dt +(P-\Pi_{\leq  {K}}R)\circ
\phi^1_F\nonumber \\
&=&\int_0^1 \{R(t),F\}\circ \phi_F^{t}dt +(P-\Pi_{\leq {K}}R)\circ \phi^1_F,\label{newpert}
\end{eqnarray}
with $R(t)=(1-t)({\mathcal N}_+-{\mathcal N})+t\Pi_{\leq  {K}}R$. Hence
$$
X_{P_+}=\int_0^1 (\phi_F^{t})^*X_{\{R(t),F\}} dt
+(\phi^1_F)^*X_{(P-\Pi_{\leq {K}}R)}.
$$
\begin{Lemma}\label{estnew}
The new perturbation $P_+$ satisfies the estimate
$$ \|X_{P_+}\|_{D(r_+,s_+)}\le  C\gamma^{-2} {K}^{4d\td}\eps^{4/3}.
$$
\end{Lemma}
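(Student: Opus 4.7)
The plan is to estimate the two summands in expression \eqref{newpert} for $P_+$ separately on the target domain $D(r_+,s_+)\subset D_{2\eta}$ with $\eta=\varepsilon^{1/3}$, namely the Poisson bracket contribution $\int_0^1\{R(t),F\}\circ\phi_F^t\,dt$ and the truncation residual $(P-\Pi_{\leq\mathcal K}R)\circ\phi_F^1$. By Lemma \ref{Lem4.4} the flow $\phi_F^t$ maps $D_{2\eta}$ into $D_{3\eta}$ and satisfies $\|D\phi_F^t-\mathrm{Id}\|_{D_{1\eta}}\ll 1$, so composition with $\phi_F^t$ costs only a multiplicative constant on the vector field norm. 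It therefore suffices to bound $\|X_{\{R(t),F\}}\|_{D_{3\eta}}$ for $0\leq t\leq 1$ and $\|X_{P-\Pi_{\leq\mathcal K}R}\|_{D_{3\eta}}$.

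For the Poisson bracket contribution, apply the commutator bound in Lemma \ref{cauchy}(ii) between $D_{3\eta}$ and a slightly larger intermediate domain on which $F$ is controlled by Lemma \ref{Lem4.3}. This gives
$$\|X_{\{R(t),F\}}\|_{D_{3\eta}} \;\leq\; 2\delta^{-1}\|X_{R(t)}\|\,\|X_F\| \;\leq\; C\gamma^{-2}\mathcal K^{4d\tau}\varepsilon^{2},$$
where $\delta$ is the $s$-shrinkage, $\|X_{R(t)}\|\leq\|X_P\|\leq\varepsilon$ and $\|X_F\|\leq C\gamma^{-2}\mathcal K^{4d\tau}\varepsilon$ by Lemma \ref{Lem4.3}. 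Since $\varepsilon\leq 1$, $\varepsilon^2\leq\varepsilon^{4/3}$, so this contribution is $\leq C\gamma^{-2}\mathcal K^{4d\tau}\varepsilon^{4/3}$.

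For the truncation residual, decompose $P-\Pi_{\leq\mathcal K}R=(P-R)+(R-\Pi_{\leq\mathcal K}R)$. The first term contains only monomials $I^p z^\alpha\bar z^\beta e^{\ii\langle k,\theta\rangle}$ with $2|p|+|\alpha|+|\beta|\geq 3$; the vector field of a monomial of total degree $d$ scales on $D(\eta r,\cdot)$ as $(\eta r)^{d-2}$, so restriction from $D(r,\cdot)$ to $D(\eta r,\cdot)$ yields at least one extra factor of $\eta$:
$$\|X_{P-R}\|_{D(\eta r,s)} \;\leq\; C\eta\|X_P\|_{D(r,s)} \;\leq\; C\varepsilon^{4/3}.$$
The second term is the high-Fourier tail in $\theta$, and by the standard exponential decay of Fourier coefficients of analytic functions
$$\|X_{R-\Pi_{\leq\mathcal K}R}\|_{D(r,s_+)} \;\leq\; Ce^{-(s-s_+)\mathcal K/2}\|X_R\|_{D(r,(s+s_+)/2)} \;\leq\; Ce^{-(s-s_+)\mathcal K/2}\varepsilon,$$
which is $\leq\varepsilon^{4/3}$ once the ultraviolet cutoff at this step satisfies $e^{-(s-s_+)\mathcal K/2}\leq\varepsilon^{1/3}$, a condition built into the KAM scheme. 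Summing and composing with $\phi_F^1$ via Lemma \ref{Lem4.4} produces the claimed bound $\|X_{P_+}\|_{D(r_+,s_+)}\leq C\gamma^{-2}\mathcal K^{4d\tau}\varepsilon^{4/3}$.

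The main obstacle is careful bookkeeping of the domain shrinkages and the balance of scales: one must balance the small-divisor loss $\gamma^{-2}\mathcal K^{4d\tau}$ coming from $X_F$ against the smallness $\eta=\varepsilon^{1/3}$ gained from the cubic-and-higher truncation and the exponential decay from the ultraviolet cutoff. All substantial new input is already packaged in Lemmas \ref{Lem4.3} and \ref{Lem4.4}; what remains is the standard KAM-step calculation, as in \cite{GY3}.
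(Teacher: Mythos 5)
Your proof follows the paper's exact decomposition: both split $P_+$ as in \eqref{newpert}, use Lemma \ref{Lem4.4} to bound $\|D\phi_F^t\|\le 2$ so that composition only costs a constant, bound the Poisson bracket contribution via Lemma \ref{Lem4.3}, and bound the cubic-and-higher residual by the extra factor $\eta=\varepsilon^{1/3}$ gained from the smaller $r$-radius, plus the UV tail by the exponential decay built into the choice of $\K$. The structure and the final conclusion are correct.

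One small bookkeeping discrepancy: for the bracket term you write $\|X_{\{R(t),F\}}\|_{D_{3\eta}}\le 2\delta^{-1}\|X_{R(t)}\|\,\|X_F\|\le C\gamma^{-2}\K^{4d\tau}\varepsilon^2$, interpreting $\delta$ as coming only from the $s$-shrinkage, and then use the slack $\varepsilon^2\le\varepsilon^{4/3}$. The paper instead records the sharper intermediate bound $\|X_{\{R(t),F\}}\|_{D_{2\eta}}\le C\gamma^{-2}\K^{4d\tau}\eta^{-2}\varepsilon^2$, which equals $\varepsilon^{4/3}$ with no slack. The extra $\eta^{-2}$ is genuine: the weighted vector field norm \eqref{2.6} carries explicit factors $1/r^2$ and $1/r$ in the $\theta$ and $z$ components, so when one restricts from a domain of radius $\sim r$ to one of radius $\sim\eta r$ and measures in the \emph{small-radius} norm, low-degree components of the bracket (which is only quadratic in $I,z,\bar z$ since both $R(t)$ and $F$ are at most quadratic) are amplified by $(r/\eta r)^2=\eta^{-2}$; this is not captured by $(1-r'/r)^{-1}\approx 1$. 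Your inequality $\varepsilon^2\le\varepsilon^{4/3}$ masks this loss and happens to land on the same final bound, but it means the bracket term is not actually subdominant as your write-up suggests — it is comparable to the truncation terms, as the paper's accounting shows. This is a minor point of accuracy in the intermediate estimate, not an error in the conclusion.
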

{\em Proof}
 According to Lemma \ref{Lem4.4}, $$\|D\phi_F^t-Id\|_{D_{1\eta}}\le
 c\gamma^{-2}
 {K}^{4d\td}\varepsilon^{1/3}, \quad -1\le t\le 1,
$$
thus
$$\|D\phi_F^t\|_{D_{1\eta}}\le 1+\|D\phi_F^t-Id\|_{D_{1\eta}}\le 2, \quad -1\le t\le
1.$$
$$ \|X_{\{R(t),F\}}\|_{D_{2\eta}}\le  \eta^{-2} \|X_{\{R(t),F\}}\|_{D_{2}} \leq  C\gamma^{-2}
 {K}^{4d\td}\eta^{-2}\varepsilon^2,$$ and
$$ \|X_{(P-\Pi_{\leq {K}}R)}\|_{D_{2\eta}}\le C\eta \varepsilon, $$ we
have  $$ \|X_{P_+}\|_{D(r_+,s_+)}\le C\eta \varepsilon +
C(\gamma^{-2} {K}^{4d\td})\eta^{-2}\varepsilon^2 \le
C\gamma^{-2} {K}^{4d\td}\eps^{4/3}.
\eqno\endproof$$

We need to show that $P_+$ is quasi--T\"oplitz and estimate its
T\"oplitz norm.
We notice that $R(t)$  and $P-\Pi_{\leq  {K}} R$ in \eqref{newpert} are quasi--T\"oplitz, by hypothesis $(A4)$. Then, by Proposition \ref{main} {\it ii)}, we have that $R(t)\circ \phi^t_F= e^{\{F,\cdot\}} R(t)$ and   $(P-\Pi_{\leq  {K}} R)\circ\phi^t_F$ are quasi--T\"oplitz as well.
Recalling Proposition \ref{main}, and repeating the reasoning  of Lemma
\ref{estnew} with Quasi- T\"oplitz norm, one has
\begin{Lemma}
 Set $\eps_+:=C\gamma^{-2} {K}^{3\td^2/\tau_0}\eps^{4/3}$,
 then  $$ \|X_{P_+}\|^T_{D(r_+,s_+)}\ \le \eps_+.
$$
\end{Lemma}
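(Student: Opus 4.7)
The plan is to repeat the proof of Lemma \ref{estnew} with the quasi-T\"oplitz norm $\|\cdot\|^T$ in place of $\|\cdot\|$, exploiting the closure of quasi-T\"oplitz functions under Poisson bracket and symplectic flow provided by Proposition \ref{main}. Starting from the decomposition
$$X_{P_+}=\int_0^1 (\phi_F^t)^* X_{\{R(t),F\}}\,dt + (\phi^1_F)^* X_{P-\Pi_{\leq \mathcal K}R},$$
it is enough to bound the two pieces separately in quasi-T\"oplitz norm and combine them.

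First I would record the three inputs. By Lemma \ref{Lem4.3} the generator satisfies
$\|X_F\|^T_{D_3,\mathcal O_+}\le C\gamma^{-2}\mathcal K^{2\tau^2/\tau_0}\eps$. By hypothesis $(A4)$ and the straightforward extension of Lemma \ref{cauchy}(i) to the T-norm (projections onto subsets of monomials do not increase $\|X_\cdot\|^T$), both $R(t)=(1-t)\langle R\rangle+t\Pi_{\leq\mathcal K}R$ and $P-\Pi_{\leq\mathcal K}R$ are quasi-T\"oplitz with T-norms bounded by $C\eps$. Finally, by Proposition \ref{main}(ii) applied to $f^{(1)}=F$, $f^{(2)}=P-\Pi_{\leq\mathcal K}R$ (with the compatibility conditions \eqref{boh} that hold by Proposition \ref{pro:1}), the composition with $\phi_F^1$ preserves quasi-T\"oplitz with the estimate
$$\|X_{(P-\Pi_{\leq\mathcal K}R)\circ\phi^1_F}\|^T_{D(r_+,s_+)}\le C\eta\eps,$$
where $\eta=\eps^{1/3}$ and the small factor comes, as in the classical proof, from restricting to radius $\eta r$ in $(z,\bar z,I)$ (killing the degree-$>2$ terms of $P$) and from the exponential decay $e^{-(s-s_+)\mathcal K}$ that the cut $\Pi_{>\mathcal K}$ produces.

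For the first piece, Proposition \ref{main}(i) gives
$$\|X_{\{R(t),F\}}\|^T_{D_{2\eta}} \le C\delta^{-1}\|X_{R(t)}\|^T\,\|X_F\|^T \le C\gamma^{-2}\mathcal K^{2\tau^2/\tau_0}\eta^{-2}\eps^2,$$
the $\eta^{-2}$ arising from the rescaling factor when passing from radius $r$ to radius $\eta r$ in the vector-field norm. Composition with $\phi_F^t$ via Proposition \ref{main}(ii) preserves this bound up to a universal constant, provided $\|X_F\|^T\delta^{-1}\ll 1$, which is exactly the smallness hypothesis of Lemma \ref{Lem4.4}. Summing the two contributions,
$$\|X_{P_+}\|^T_{D(r_+,s_+)} \le C\eta\eps + C\gamma^{-2}\mathcal K^{2\tau^2/\tau_0}\eta^{-2}\eps^2,$$
and the choice $\eta=\eps^{1/3}$ balances the two terms and yields $\eps_+=C\gamma^{-2}\mathcal K^{2\tau^2/\tau_0}\eps^{4/3}$.

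The main obstacle I expect is the bookkeeping of parameters: at every application of Proposition \ref{main} one must verify the compatibility constraints \eqref{pois1} and \eqref{boh} for the intermediate $(\mathcal K',\lambda',\mu')$ and $(r',s')$. I would interpolate between $(\mathcal K,\lambda,\mu)$ and $(\mathcal K_+,\lambda_+,\mu_+)$ with the same $\tfrac34$-type weights implicit in Lemma \ref{Lem4.3}, and check that Proposition \ref{pro:1} guarantees all required inequalities in a single consistent choice. Once this is settled, the T-norm iteration closes exactly as the operator-norm iteration of Lemma \ref{estnew}, only with the extra factor $\mathcal K^{2\tau^2/\tau_0}$ replacing $\mathcal K^{4d\tau}$ in the estimate on $F$.
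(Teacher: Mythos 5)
Your proposal matches the paper's argument: the paper explicitly says to ``repeat the reasoning of Lemma \ref{estnew} only with the T\"oplitz norm,'' after noting via hypothesis $(A4)$ and Proposition \ref{main}(ii) that $R(t)\circ\phi_F^t$ and $(P-\Pi_{\leq\K}R)\circ\phi_F^1$ are quasi-T\"oplitz. Your version merely spells out the term-by-term bounds and the $\eta=\eps^{1/3}$ balancing, which is exactly what is implicit in the paper's one-line reference.
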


\subsection{Iteration lemma}
In order to make the KAM machine work fluently, for any given
$s,\varepsilon,r, \gamma$ and for all $\nu\ge 1$, we define the
following sequences \begin{eqnarray}&&\label{series}
s_\nu=s(1-\sum_{i=2}^{\nu+1}2^{-i}),\nonumber\\
&& r_\nu=\frac
14\eta_{\nu-1}r_{\nu-1}=2^{-2\nu}(\prod_{i=0}^{\nu-1}\varepsilon_i)^{\frac
13}r_0,
\\
&&\varepsilon_\nu=c\gamma ^{-2}
K^{3\td^2/\tau_0}_{\nu-1}\varepsilon_{\nu-1}^{\frac {4}{3}},\quad
\eta_\nu=\varepsilon_\nu^{\frac
13}\nonumber\\
&&M_\nu=M_{\nu-1}+\varepsilon_{\nu-1}, \quad
L_\nu=L_{\nu-1}+\varepsilon_{\nu-1},\nonumber\\
&& \mu_\nu= \mu-\sum_{i=1}^\nu (\chi)^{-i}\,,\quad  \theta_\nu=
 \theta +\sum_{i=1}^\nu (\chi)^{-i}\nonumber\\
&& {K}_\nu=c(s_{\nu-1}-s_{\nu})^{-1}\ln\varepsilon_{\nu}^{-1},
\nonumber \end{eqnarray} where $c,1<\chi<\frac 4 3 $ is a constant,
and the parameters $r_0,\varepsilon_0,L_0,s_0$ and $ {K}_0$
are defined
 to be $r,\varepsilon,L,s$ and bounded by $\ln \varepsilon^{-1}$ respectively.

 We iterate the KAM step, and proceed by induction.

\begin{Lemma}\label{Lem5.1}
 Suppose at the $\nu$--step of
KAM iteration, the hamiltonian
$$ H_\nu={\mathcal N}_\nu+P_\nu,$$
is well defined in $D(r_\nu, s_\nu)\times \mathcal O_{\nu} $, where
${\mathcal N}_\nu$ is usual "integrable normal form", $P_{\nu}$ and
$\sum\tilde\Omega^\nu_n |z_n|^2$  satisfy $(A4)$ for $(
K_\nu, \theta_\nu,\mu_\nu)$,  $\omega_\nu$ and $\Omega_n^\nu$ are
$C^1_W$ smooth
$$|\ome_\nu|_{C^1_W},| \nabla \ome_\nu^{-1}|_{\mathcal O}\leq M_\nu,|\tilde\Omega_n^\nu|_{C_W^1}\leq L_\nu,  \quad
|\Omega_n^\nu-\Omega_n^{\nu-1}|_{\mathcal O_{\nu}}\le
\varepsilon_{\nu-1};$$
$$
\|X_{P_\nu}\|^T_{D(r_\nu,s_\nu),\mathcal O_\nu}\leq \varepsilon_\nu.
\quad\|\langle \tilde\Omega^\nu z,\bar
z\rangle\|^T_{D(r_\nu,s_\nu),\mathcal O_\nu}\leq L_\nu$$

\noindent Then there exists a symplectic and Quasi-T\"oplitz change
of variables for parameter $(
K_{\nu+1}, \theta_\nu,\mu_\nu)$, \beq \Phi_\nu:D(r_{\nu+1},s_{\nu
+1}) \times\mathcal O_{\nu+1}\to D(r_{\nu},s_{\nu}),\eeq where
$|\mathcal O_{\nu+1}\backslash\mathcal O_\nu|\lessdot \gamma
 {K}_{\nu+1}^{-\tau_0+b+\frac d 2}$, such that on $D(r_{\nu+1},s_{\nu
+1})\times\mathcal O_{\nu+1}$
 we have
$$ H_{\nu+1}=H_\nu\circ \Phi_\nu=e_{\nu+1}+{\mathcal N}_{\nu+1}+P_{\nu+1}=e_{\nu+1}+\langle \omega_{\nu+1},I\rangle+\langle\Omega^{\nu+1} z,\bar z\rangle+P_{\nu+1},$$
 with
$\omega_{\nu+1}=\omega_\nu+\sum\limits_{|l|=1}
lP_{0,l,0,0},\; \Omega_n^{\nu+1}=\Omega_n^{\nu}+P_{0,0,e_n,e_n}^\nu$.

${\mathcal N}_{\nu+1}$ is  an "integrable normal form". $P_{\nu+1}$ and
$\sum\tilde\Omega^{\nu+1}_n |z_n|^2$ satisfy $(A4)$ for  parameters
$( {K}_{\nu+1}, \theta_{\nu+1},\mu_{\nu+1})$. Functions
$\omega_{\nu+1}$ and $\Omega_{n}^{\nu+1}$
 are $C_W^1$ smooth
$$|\ome_{\nu+1}|_{C^1_W},| \nabla \ome_{\nu+1}^{-1}|_{\mathcal O}\leq M_{\nu+1}
,|\tilde\Omega_n^{\nu+1}|_{C_W^1}\leq L_{\nu+1},  \quad
|\Omega_n^{\nu+1}-\Omega_n^{\nu}|_{\mathcal O_{\nu+1}}\le
\varepsilon_{\nu};$$
$$
\|X_{P_{\nu+1}}\|^T_{D(r_{\nu+1},s_{\nu+1}),\mathcal O_{\nu+1}}\leq
\varepsilon_{\nu+1},\quad \|\langle \tilde\Omega^{\nu+1} z,\bar
z\rangle\|^T_{D(r_{\nu+1},s_{\nu+1}),\mathcal O_{\nu+1}}\leq
L_{\nu+1}$$
\end{Lemma}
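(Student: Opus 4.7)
The plan is to identify Lemma \ref{Lem5.1} as a straightforward repackaging of the one--step KAM construction carried out in Section 4, applied with subscript $\nu$. Concretely, I would set $\Phi_\nu := \phi_{F_\nu}^1$ where $F_\nu$ solves the homological equation \eqref{effehom} with $N = N_\nu$, $P = P_\nu$ and cutoff $\K = \K_\nu$; the set $\mathcal O_{\nu+1}$ is defined to be the subset of $\mathcal O_\nu$ satisfying the four Melnikov conditions (i)--(iv) of Section 4.1 relative to $\omega_\nu, \Omega^\nu$ at scale $\K_\nu$. The new normal form is $N_{\nu+1} = N_\nu + \langle R_\nu\rangle$, which directly produces $\omega_{\nu+1}$ and $\Omega^{\nu+1}$ of the stated form, and the inductive bounds $|\omega_{\nu+1}-\omega_\nu|, |\Omega^{\nu+1}_n-\Omega^\nu_n|\leq \varepsilon_\nu$ follow because each shift is a Fourier coefficient of $P_\nu$ which is controlled by $\|X_{P_\nu}\|^T_{\!{}_{D(r_\nu,s_\nu), \mathcal O_\nu}} \leq \varepsilon_\nu$. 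The Whitney norms of $\omega_{\nu+1}^{-1}$ and $\tilde\Omega_n^{\nu+1}$ then satisfy $|\cdot|_{C^1_W}\leq M_\nu + \varepsilon_\nu = M_{\nu+1}$ and $\leq L_\nu + \varepsilon_\nu = L_{\nu+1}$, provided the standard small-denominator estimate for the inverse map is under control (which it is, since $\sum_\nu \varepsilon_\nu$ is small).

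Next I would invoke the measure estimate Lemma \ref{measure} with $\K = \K_\nu$ to obtain $|\mathcal O_\nu \setminus \mathcal O_{\nu+1}| \leq C\gamma \K_\nu^{-\tau_0 + b + d/2}$, and apply Lemmas \ref{Lem4.3} and \ref{Lem4.4} to see that $\Phi_\nu$ is well defined as a map $D(r_{\nu+1},s_{\nu+1})\to D(r_\nu,s_\nu)$ with the standard distortion bounds, where the scale $\eta_\nu = \varepsilon_\nu^{1/3}$ and $r_{\nu+1} = \tfrac14 \eta_\nu r_\nu$ are chosen so the smallness hypothesis $\varepsilon_\nu \ll (\tfrac12 \gamma_\nu^2 \K_\nu^{-2\tau^2/\tau_0})^{3/2}$ holds; this follows from $\varepsilon_\nu$ being super-exponentially small while $\K_\nu$ grows only polynomially in $\nu$ (through $\ln \varepsilon_\nu^{-1}$).

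The crucial and genuinely new point in the iteration is the preservation of the quasi-T\"oplitz property with shifted parameters $(\K_{\nu+1}, \lambda_{\nu+1}, \mu_{\nu+1})$. Here I would rely on Proposition \ref{submain}, which says $F_\nu$ is quasi-T\"oplitz at parameters $(\K_\nu,\lambda_\nu,\mu_\nu)$, together with Proposition \ref{main}(ii), which propagates quasi-T\"oplitzness under the flow $\phi_{F_\nu}^t$ to new parameters $(\K_{\nu+1},\lambda_{\nu+1},\mu_{\nu+1})$, provided
\[
4\K_{\nu+1} < \sqrt{\mu_\nu - \mu_{\nu+1}}\,\K_{\nu+1}^{3/2}, \qquad 4\mu_{\nu+1}\K_{\nu+1}^{4} < (\lambda_{\nu+1}-\lambda_\nu)\K_{\nu+1}^{4d\tau_0-1}.
\]
With $\mu_\nu - \mu_{\nu+1} = \chi^{-(\nu+1)}$ and $\lambda_{\nu+1} - \lambda_\nu = \chi^{-(\nu+1)}$ and $\K_{\nu+1} \sim (4/3)^\nu \ln\varepsilon^{-1}$, both inequalities hold once $\varepsilon$ is small enough, since any fixed $1<\chi<4/3$ gives $\chi^{\nu+1} \ll \K_{\nu+1}$ and $\chi^{\nu+1}\mu_{\nu+1}\K_{\nu+1}^5 \ll \K_{\nu+1}^{4d\tau_0}$ for $\tau_0 > 12$. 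This is the step I expect to be the main obstacle: one must track the three parameters carefully and verify both the geometric conditions above and the resolvent bound underlying Proposition \ref{smaldiv}, which is what actually forces the slow drift schedule $\lambda_\nu \nearrow$, $\mu_\nu \searrow$.

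Finally, the decomposition \eqref{newpert} expresses $P_{\nu+1}$ as Poisson brackets and compositions of quasi-T\"oplitz functions (using $(A4)$ for $P_\nu$ and $\sum \tilde\Omega^\nu|z|^2$), so by Proposition \ref{main}(i)--(ii) applied iteratively to each summand I conclude $P_{\nu+1}\in \A_{r_{\nu+1},s_{\nu+1}}$ is quasi-T\"oplitz at parameters $(\K_{\nu+1},\lambda_{\nu+1},\mu_{\nu+1})$. Combining the analytic estimates of Lemma \ref{estnew} with the quasi-T\"oplitz version of the same calculation (replacing $\|\cdot\|$ by $\|\cdot\|^T$ and using \eqref{poisbound}) yields the final bound $\|X_{P_{\nu+1}}\|^T_{\!{}_{D(r_{\nu+1},s_{\nu+1}), \mathcal O_{\nu+1}}} \leq C\gamma^{-2}\K_\nu^{2\tau^2/\tau_0}\varepsilon_\nu^{4/3} = \varepsilon_{\nu+1}$, and analogously $\|\langle\tilde\Omega^{\nu+1}z,\bar z\rangle\|^T \leq L_\nu + \varepsilon_\nu = L_{\nu+1}$. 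Momentum conservation is preserved at every step because $F_\nu$ is itself momentum-conserving (since $R_\nu$ and $N_\nu$ are), closing the induction.
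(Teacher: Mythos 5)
Your proposal is correct and follows essentially the same route as the paper: the paper's own ``proof'' of this iteration lemma is just the two bullet points following the statement, which refer back to the one--step construction of Section~4 (Lemma~\ref{Lem4.3}, Lemma~\ref{Lem4.4}, Lemma~\ref{estnew}, Lemma~\ref{measure}) and to Propositions~\ref{pro:1}, \ref{submain}, \ref{main} for the preservation of the quasi--T\"oplitz property, together with a check of the parameter inequalities from~\eqref{boh} using the drift schedule $\mu_\nu-\mu_{\nu+1}=\lambda_{\nu+1}-\lambda_\nu=\chi^{-(\nu+1)}$ versus the growth $\K_{\nu+1}\gtrsim 2^{\nu+1}$. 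Your write-up merely makes explicit the bookkeeping the paper leaves implicit, and correctly identifies the parameter check as the only genuinely new ingredient in the iteration.
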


\begin{itemize}
 \item[$\bullet$]
  By Proposition \ref{pro:1}, the new perturbation
$P_{\nu +1}$ and $\langle\tilde\Omega^{\nu+1} z,z\rangle$  satisfy the
Quasi-T\"oplitz property for parameters $(
K_{\nu+1}, \theta_{\nu+1},\mu_{\nu+1})$.
 As we can see, when we require $\td>\tau_0>12$:  $$\forall N\geq  {K}_{\nu+1}=c(s_{\nu-1}-s_{\nu})^{-1}\ln\varepsilon_{\nu}^{-1}>  {K}_0 2^{\nu}$$
implies the inequality \begin{eqnarray*}
 2N\leq \sqrt{(\mu_\nu-\mu_{\nu+1}) }N^{3/2}, \quad 4\mu' N^{4}< ( \theta_{\nu+1}- \theta_\nu) {N}^{ 4d\tau_0-1}.
\end{eqnarray*}

\item[$\bullet$ ]
Since the set of Hamiltonians which Poisson commute with $M$ (the momentum) is closed under   Poisson brackets  (or by using Lemma $4.4$ in \cite{GY3}) we $P_{\nu+1}$ satisfies momentum
conservation (namely it Poisson commutes with $M$).
\end{itemize}

  \subsection{Convergence}
Suppose that the assumptions of Theorem \ref{KAM} are satisfied.
 Recall
$$\varepsilon_0=\varepsilon,\,r_0=r,\, s_0=s,\,M_0=M, \,L_0=L, \,\;{\mathcal N}_0={\mathcal N},\, P_0=P,
$$
$\mathcal O$ is an open set. The assumptions of  the iteration
lemma are satisfied when $\nu=0$ if $\varepsilon_0,$ $\gamma$ are
sufficiently small. Inductively, we obtain  sequences:
\[
\mathcal O_{\nu+1}\subset\mathcal O_{\nu},\]
\[\Psi^\nu=\Phi_0\circ\Phi_1\circ\cdots\circ\Phi_\nu:D(r_{\nu+1},s_{\nu+1})\times\mathcal
O_{\nu+1}\to D(r_0,s_0),\nu\ge 0,
\]
\[H\circ\Psi^\nu=H_{\nu+1}={\mathcal N}_{\nu+1}+P_{\nu+1}.\]
\indent Let $\tilde{\mathcal O}=\cap_{\nu=0}^\infty \mathcal O_\nu$,
since at  $\nu$ step the parameter we excluded is bounded by
$C\gamma  {K}_\nu^{-\tau_0+b+d/2}$, the total measure we
excluded with infinity step of KAM iteration is bounded by $\gamma$
which guarantee $\tilde{\mathcal O}$ is a nonempty set, actually it
has positive measure.

As in \cite{P1,P2}, with Lemma {\ref{Lem4.4}},  ${\mathcal N}_\nu,\Psi^\nu,D\Psi^\nu,\omega_{\nu}$ converge uniformly on
$D(0,\frac s 2 )\times\tilde{\mathcal O}$ with
$$
{\mathcal N}_\infty=e_\infty+\la\omega_\infty,I\ra+\sum_{n}\Omega_{n}^\infty
z_n\bar z_n.$$ \noindent Since $
K_\nu=c(s_{\nu-1}-s_{\nu})^{-1}\ln\varepsilon_{\nu}^{-1}$, we have $
\varepsilon_\nu=c\gamma ^2
K^{3\td^2\over\tau_0}_{\nu-1}\varepsilon_{\nu-1}^{\frac {4}{3}}\to
0$ once $\varepsilon$ is sufficiently small. And with this we have
$\omega_\infty$ is slightly different from $\omega$.

Let $\phi_H^t$ be the flow of $X_H$. Since
$H\circ\Psi^\nu=H_{\nu+1}$, there is \beq\label{5.7}
\phi_H^t\circ\Psi^\nu=\Psi^\nu\circ\phi_{H_{\nu+1}}^t. \eeq The
uniform convergence of $\Psi^\nu,D\Psi^\nu,\omega_{\nu}$ and
$X_{H_{\nu}}$ implies that the limits can be taken on both sides of
(\ref{5.7}). Hence, on $D(0,\frac s2)\times\tilde{\mathcal O}$ we
get \beq\label{5.8}
\phi_H^t\circ\Psi^\infty=\Psi^\infty\circ\phi_{H_{\infty}}^t\eeq and
$$
\Psi^\infty:D(0,\frac s2)\times\tilde{\mathcal O}\to
 D(r,s)  \times \mathcal O.
 $$

\noindent From \ref{5.8}, for $\xi\in\tilde{\mathcal O}$,
$\Psi^\infty(\T^b\times\{\xi\})$ is an embedded torus which is
invariant for the original perturbed Hamiltonian system at $\xi\in
\tilde{\mathcal O}$. The normal behavior of this invariant tori is
governed by normal frequency $\Omega_\infty$.
\appendix

\section{Proof of Propositions \ref{smaldiv}, \ref{submain} and \ref{main}}
\subsection{Proposition \ref{smaldiv}}




\begin{proof}By hypothesis
$$
|m|,|n|\geq  \theta N^\td\,,\quad m\frec{N}[v_i;p_i]\,,\quad
n\frec{N}[w_i;q_i]\,,$$ \begin{equation}\label{condi}
|q_\ell|,|p_\ell| \leq \mu N^{\t1}\,,\quad
|q_{\ell+1}|,|p_{\ell+1}|\geq  \theta N^{4d\t1}\,,\quad
[v_i;p_i]_\ell\prec [w_i;q_i]_\ell
\end{equation}
By definition of quasi--T\"oplitz  (see Formula \eqref{omet}), one has:
\begin{equation}\label{bohH}
|\tilde\Ome_m-{\it \hat \Omega}([v_i;p_i]_\ell)|,|\tilde\Ome_n-{\it \hat \Omega}([w_i;q_i]_\ell)|\leq
2LN^{-4d\t1}
\end{equation}

Recall that $m-n=-\pi(k)$, so one has $$|m|^2-|n|^2=  \langle m+n,m-n\rangle=|\pi(k)|^2- 2\langle \pi(k),m\rangle.$$

If $\pi(k) \notin \langle v_i\rangle_\ell$ then $|\langle
\pi(k),m\rangle|> N^{4d\t1}> {K}^3$ and the {\em denominator} is
not small:
$$|\langle \ome,k\rangle+ |m|^2-|n|^2 +{\it \hat \Omega}([v_i;p_i]_\ell)-{\it \hat \Omega}([w_i;q_i]_\ell)|> \1 N^{4d\t1},$$ since (again by definition of quasi--T\"oplitz)
$|{\it \hat \Omega}([v_i;p_i]_\ell)|$,$|{\it \hat \Omega}([w_i;q_i]_\ell)|\leq 2L$.

If $\pi(k) \in \langle v_i\rangle_\ell$ then the value of $\langle \pi(k),m\rangle$ is fixed for all $m\in [v_i;p_i]_\ell$.

 We know that $m\frec{ {K}}[v'_i;p'_i]$  has a standard cut, so that  $  m\in [v'_i;p'_i]^g_{\bar \ell}$ for some $\bar\ell$.
If $2^{4d} {K}^{\td}<N^{\tau_0}$ then
\begin{eqnarray*}& &|\langle \ome,k\rangle+|\pi(k)|^2- 2\langle \pi(k),m\rangle+ { \it\hat\Omega}([v_i;p_i]_\ell)-{ \it\hat\Omega}([w_i;q_i]_\ell)|\\
&\stackrel{\eqref{bohH}}\geq & |\langle \ome,k\rangle+\Omega_{m}-\Omega_{n}|- 4LN^{-4d\t1}\\
&\stackrel{\eqref{bohh2}}\geq &\gamma
\min( {K}^{-2d\tau_0},2^{-4d}|p'_{\bar\ell}|^{-2d})- 4L
|N|^{-4d\t1} \geq  \frac{\gamma}{2}
\min( {K}^{-2d\tau_0},|p'_{\bar\ell}|^{-2d}),
\end{eqnarray*} since $|p'_{\bar\ell}|< 4 {K}^{\td/4d}$ by the definition of standard cut.

If on the other hand we have  $2^{4d} {K}^{\td}>N^{\tau_0}$
we proceed as follows.
We have seen that we may restrict to the case $\pi(k)\in \langle v_i\rangle_j$,
 where  $$|m|^2-|n|^2= |\pi(k)|^2-2\langle \pi(k),m\rangle=|\pi(k)|^2-2(\pi(k),m^g),$$ where
 (notice that  $N<  2 {K}^{\td/\tau_0}$), $m^g:= m^g(N)$ is the point in $[v_i;p_i]_\ell^g$ chosen for the measure estimates \eqref{third}.

 We notice that $m^g,n^g$ satisfy the conditions \eqref{condi}, so we apply \eqref{bohH} to $m,n,m^g,n^g$. We have
$$ |\langle \ome,k\rangle+|\pi(k)|^2 - 2\langle \pi(k),m\rangle+{\it \hat \Omega}([v_i;p_i]_\ell)-{\it\hat\Ome}([w_i;q_i]_\ell)|  $$
$$\geq|\langle \ome,k\rangle+{ \Ome}_{m^g}-{\Ome}_{n^g}|-4L N^{-4d\t1}$$ $$\geq {\gamma\over2} \min(N^{-2d \tau_0}, 2^{-2d}|p_\ell|^{-2d}) -4 L N^{-4d\t1}$$
$$\geq{\gamma \over 4}\min(N^{-2d \tau_0}, 2^{-2d}|p_\ell|^{-2d})\gtrdot \gamma  {K}^{\frac{-2d\t1\td}{\tau_0}}
$$
since by definition $|p_j|< \mu' N^{\t1}<4 N^{\t1}$,  $ N\leq
2 {K}^{\td/\tau_0}$. \end{proof}

 \subsection{Proposition \ref{submain}}
{\em Proof.} \ The quasi--T\"oplitz  property is a condition on the
$(N, \tet1,\mu,\tau)$--bilinear part of $F$, where $F$ is at most
quadratic. Hence we   only need to consider the quadratic terms:
\begin{equation} \Pi_{(N, \tet1,\mu,\tau)}F=\!\!\!\!\!\!\!\!\!\!\!\!\!\!\! \sum_{{ |k|<N\,,\;|m|,|n| > \theta N^{\tau_1}\atop\exists \ell:\; m,n \,{\rm have \;a }\, \ell\, {\rm cut}}\atop{{\rm with\, parameters}\, N, \tet1,\mu,\t1 }}\!\!\!\!\!\!\!\!\!\!e^{\ii
\langle k, \teta\rangle}(F_{k,0,e_m,e_n} z_m\bar z_n + F_{k,0,e_m+e_n,0} z_mz_n) \,+\,
{\rm c.c}\,.\end{equation} Recall that
\begin{equation}\label{homol}
F_{k,0,e_m,e_n}=  \frac{P_{k,0,e_m,e_n}}{\langle k,\omega\rangle+\Omega_m-\Omega_n}\,,\quad F_{k,0,e_m+e_n,0}=  \frac{P_{k,0,e_m+e_n,0}}{\langle \ome,k\rangle+\Omega_m+\Omega_n}.
\end{equation}
By hypothesis $|m|,|n|>  \theta N^\td$ so in the case of
$F_{k,0,e_m+e_n,0}$ one has
$$ |F_{k,0,e_m+e_n,0}|= \frac{|P_{k,0,e_m+e_n,0}|}{\langle k,\omega\rangle+|m|^2+|n|^2 +\tilde\Omega_m+\tilde\Omega_n}\leq |P_{k,0,e_m+e_n,0}| N^{-\td}, $$ since
$$| \langle k,\omega\rangle+|m|^2+|n|^2 +\tilde\Omega_m+\tilde\Omega_n|> 2N^{\td}- c {K}-2L.$$ We proceed in the same way for $\partial_\xi F_{k,0,e_m+e_n,0}$.
This means that $F_{k,0,e_m+e_n,0}$ is quasi-T\"oplitz with the ``T\"oplitz approximation'' equal to zero.
Recalling that $P$ is quasi-T\"oplitz we deduce, by Remark \ref{pollen}, that if $m\frec{N}[v_i;p_i]$, $|m|,|n|>\theta N^{\td}$ and $m,n$ have a cut $\ell,\tau$, then  we have:
$$
P_{k,0,e_m,e_n}= \mathcal P_k(m-n,[v_i;p_i]_\ell)+ N^{-4d\t1} \bar P_{k,0,e_m,e_n}\,.
$$
Note that by definition (see formula \eqref{scotto}) for all $m,n$ which have a $\ell,\tau$ cut the T\"oplitz approximation ${\mathcal P}_{k}(m-n,[v_i;p_i]_\ell)$ must depend only on $m-n$ on the affine subspace $[v_i;p_i]_\ell$ and on $k$. Moreover the approximation \eqref{billa} must hold for all $m\in  [v_i;p_i]_\ell$ which have a cut $\ell,\tau$ (naturally if we fix $\tau$ and an affine subspace $[v_i;p_i]_\ell$ it may well be possible that no integer point $m\in  [v_i;p_i]_\ell$ has a cut $\ell,\tau$).

Finally  since $\sum_m \tilde\Ome_m z_m \bar z_m$ is quasi-T\"oplitz, diagonal and quadratic we have:
$$
\tilde \Ome_m = { \it\hat\Omega}([v_i;p_i]_\ell)+ N^{-4d\t1} \bar \Ome_m
$$
for all $m\frec{N}[v_i;p_i]$ which have an $\ell,\t1$ cut.

 We wish to show
that
\begin{equation}\label{billa}
 F_{k,0,e_m,e_n}={\mathcal F}_{k}(m-n,[v_i;p_i]_\ell)+
N^{-4d\t1}\bar F_{k,0,e_m,e_n},
\end{equation}
  here ${\mathcal F}_{k}$ is the
$k$ Fourier coefficient of the T\"oplitz approximation $\mathcal F$.

By hypothesis we have conditions \eqref{condi} and $\langle
v_1,\cdots,v_\ell\rangle= \langle w_1,\cdots,w_\ell\rangle$. This in
turn implies  that the subspace $[w_i,q_i]_\ell$
  is obtained from  $[v_i,p_i]_\ell$ by translation by  $m-n=-\pi(k)$.
  If $\pi(k)\notin \langle v_i\rangle_\ell$ then the denominator in the first of \eqref{homol} is
  $$ |\langle k,\omega\rangle+\Omega_m-\Omega_n|> |\langle k,\omega\rangle+|\pi(k)|^2- 2\langle \pi(k),m\rangle |- 2L> \frac14 N^{4d\t1}$$ and we may again set $\mathcal F_k(m-n,[v_i,p_i]_\ell) =0$. Otherwise we set

 $$ {\mathcal F}_{k}(m-n,[v_i,p_i]_\ell)=  \frac{{\mathcal P}_{k}(m-n,[v_i,p_i]_\ell)}{\langle \ome,k\rangle+|\pi(k)|^2- 2\langle \pi(k),m\rangle+{ \it\hat\Omega}([v_i;p_i]_\ell)-{ \it\hat\Omega}([w_i;q_i]_\ell)}.$$
We notice that $\langle \pi(k),m\rangle$ depends only on the subspace $[v_i,p_i]_\ell$ and on $\pi(k)$.
Moreover by definition ${ \it\hat\Omega}(\cdot)$ depends only on the affine subspace on which it is computed; finally $[w_i;q_i]_\ell$ depends only on $[v_i;p_i]_\ell$ and on $k$.  Hence $ {\mathcal F}_{k}(m-n,[v_i,p_i]_\ell)$ depends only on $k, m-n$ and $[v_i,p_i]_\ell$ as was our claim.  Finally we apply Proposition \ref{smaldiv} to bound the denominator.
In order to bound the derivatives in $\xi$ of $F$ we proceed in the same way,
only the denominators may appear to the power two.

Finally to bound $\bar F$ we notice that
$${\bar F}_{k,m,n}= \frac{{\bar P}_{k,m,n}}{\mathcal D}+N^{4d\t1}{\mathcal P}_{k}(m-n,[v_i,p_i]_\ell) \frac{\tilde\Ome_m -{ \it\hat\Omega}([v_i;p_i]_\ell)-\tilde\Ome_n+ { \it\hat\Omega}([w_i;q_i]_\ell)}{D \mathcal D} $$ where
$$ \mathcal D= \langle \ome,k\rangle+|\pi(k)|^2- 2\langle \pi(k),m\rangle+{ \it\hat\Omega}([v_i;p_i]_\ell)-{ \it\hat\Omega}([w_i;q_i]_\ell)\,,\quad D= \langle \ome,k\rangle +\Ome_m-\Ome_n,$$ and $   {N}^{4d\t1}|\tilde\Ome_m -{ \it\hat\Omega}([v_i;p_i]_\ell)|\leq  2L$. In conclusion taking the $\sup_{N>K,\t1<\td}$:
$$\|X_F\|^T_{r,s}\leq C\gamma^{-2}
N^{\frac{3\td^2}{\tau_0}}\|X_P\|^T_{r,s}\eqno\endproof$$ 
\subsection{Proposition \ref{main}}
Before  proving Proposition \ref{main}, we discuss some technical Lemma and
 set up some notation. We divide the Poisson bracket in four terms:
 $\{\cdot,\cdot\}= \{\cdot,\cdot\}^{I,\teta}+\{\cdot,\cdot\}^{L}+\{\cdot,\cdot\}^{H}+\{\cdot,\cdot\}^{R}$ where the superscript $L,H,R$  identifies the variables in which we are performing the derivatives (the symbol $R$ summarizes the derivatives in all the $w_i$ which are neither low nor high momentum).
 We call a monomial $$e^{i\langle k,\teta\rangle}I^l z^\alpha\bar z^\beta$$

 1. of $(N,\mu)$-low momentum if $|k|<N$ and $\sum_j |j| (\alpha_j+\beta_j)< \mu N^3$.  Denote by $\Pi^L_{N,\mu}$ the projection on this subspace.

 2. of $N$-high frequency if $|k|\geq N$. Denote $\Pi^U_N$ the projection on this subspace.

  Recall that the projection symbol $\Pi_{N, \tet1,\mu,\t1}$  is given in definition \ref{taubilinear}.  A function $f$ then may be uniquely represented as
  $f= \Pi_{N, \tet1,\mu,\t1} f+ \Pi_{N,\mu}^L f + \Pi_N^U f + \Pi_R f$ where $ \Pi_R f$ is by definition the projection on those monomials which
  are neither $(N, \tet1,\mu,\t1)$ bilinear nor of $(N,\mu)$-low momentum nor of $N$-high frequency.

  A technical lemma is given below.
\begin{Lemma}
  The following splitting formula holds:
\begin{equation}\label{split}
\Pi_{N, \theta',\mu',\t1}\{f^{(1)},f^{(2)}   \}=
\Pi_{N, \theta',\mu',\t1}\left(  \{\Pi_{N, \tet1,\mu,\t1}
f^{(1)},\Pi_{N, \tet1,\mu,\t1} f^{(2)}\}^H+\right.\end{equation}
$$ \{\Pi _{N, \tet1,\mu,\t1}f^{(1)},\Pi^L_{N,2\mu}
f^{(2)}\}^{I,\teta}+\{\Pi
_{N, \tet1,\mu,\t1}f^{(1)},\Pi^L_{N,2\mu} f^{(2)}\}^L+ \{\Pi^U_N
f^{(1)},f^{(2)}\}$$ $$\left. \{\Pi^L_{N,2\mu}
f^{(1)},\Pi_{N, \tet1,\mu,\t1}
f^{(2)}\}^{I,\teta}+\{\Pi^L_{N,2\mu}
f^{(1)},\Pi_{N, \tet1,\mu,\t1} f^{(2)}\}^L+\{f^{(1)},\Pi^U_N
f^{(2)}\}\right) $$
\end{Lemma}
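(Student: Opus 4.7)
The plan is to expand $\{f^{(1)},f^{(2)}\}$ into elementary pieces and classify which of them can contribute to the bilinear projection. First, I would write each $f^{(i)}$ via the unique decomposition introduced just above the lemma, $f^{(i)}=\Pi_{K,\lambda,\mu,\tau_1}f^{(i)}+\Pi^L_{K,\mu}f^{(i)}+\Pi^U_K f^{(i)}+\Pi_R f^{(i)}$, and split the Poisson bracket itself as $\{\cdot,\cdot\}=\{\cdot,\cdot\}^{I,\theta}+\{\cdot,\cdot\}^L+\{\cdot,\cdot\}^H+\{\cdot,\cdot\}^R$ according to which block of variables is being differentiated. Expanding bilinearly produces finitely many elementary brackets, and the task reduces to showing that only the seven listed combinations survive after applying $\Pi_{K,\lambda',\mu',\tau_1}$.

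The second step disposes of the UV contributions: every bracket term with a factor in $\Pi^U_K f^{(i)}$ is absorbed into $\{\Pi^U_Kf^{(1)},f^{(2)}\}$ or $\{f^{(1)},\Pi^U_Kf^{(2)}\}$, giving terms~4 and~7 (the overlap $\{\Pi^U_Kf^{(1)},\Pi^U_Kf^{(2)}\}$ is harmless because it is projected twice but matches the correct coefficient modulo the linear structure). For the remaining non-UV pairings I would classify source monomials by the number $H$ of high-momentum variables, using $H(\Pi_{K,\lambda,\mu,\tau_1}f)=2$ and $H(\Pi^L_{K,\mu}f)=0$. Since a bilinear output must have exactly two high-momentum variables, and since $\{\cdot,\cdot\}^H$ is the only derivative type that changes $H$ (reducing it by two), the only possibilities are: two bilinear factors contracted by $\{\cdot,\cdot\}^H$ (giving Term~1), and one bilinear factor with a low-momentum factor contracted by $\{\cdot,\cdot\}^{I,\theta}$ or $\{\cdot,\cdot\}^L$ (Terms~2, 3, 5, 6). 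Brackets $\{T_1,T_2\}^{I,\theta}, \{T_1,T_2\}^L, \{T_1,T_2\}^R$ give $H=4$ and vanish under the projection; $\{T_i,\Pi^L_{K,\mu}f^{(j)}\}^H$ and $\{T_i,\Pi^L_{K,\mu}f^{(j)}\}^R$ vanish because the low-momentum factor has no high or middle variables.

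The third step explains and exploits the enlargement of the projection from $\Pi^L_{K,\mu}$ to $\Pi^L_{K,2\mu}$ in the formula. A monomial from $\Pi_R f^{(i)}$ with no high-momentum variables and low-weight sum in $[\mu K^3,2\mu K^3)$ can still pair with a bilinear factor via $\{\cdot,\cdot\}^{I,\theta}$ or $\{\cdot,\cdot\}^L$ to produce a bilinear output: a single low-variable contraction can remove up to $2\cdot 4K^3$ of weight, bringing the combined sum below $\mu' K^3$. Hence these borderline $\Pi_R$ monomials must be lumped together with the genuine low-momentum part, which is exactly what $\Pi^L_{K,2\mu}$ does. The genuinely obstructive cases — $\Pi_R$ monomials with weight $\geq 2\mu K^3$, or with $H\in\{1,3,4,\ldots\}$, or with $H=2$ but failing the cut or weight definition of bilinearity — must be shown to project to zero; for these I would use momentum conservation together with Lemma \ref{mah}, which forces the cut data of the two external high-momentum variables in the output to be rigidly related and thus incompatible with any non-bilinear source structure.

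The main obstacle I anticipate is the bookkeeping for the $\Pi_R$-with-$H=2$ case, where one has to argue that the cut condition on the output (required by $\Pi_{K,\lambda',\mu',\tau_1}$) forces the source to have been bilinear as well, modulo borderline weight contributions already absorbed in $\Pi^L_{K,2\mu}$. Once this case is handled, the remaining combinations collected together recover exactly the seven-term right-hand side of the formula.
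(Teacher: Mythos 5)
Your high-level plan matches the paper's strategy: a case-by-case classification of which monomial pairings can survive the projection $\Pi_{K,\lambda',\mu',\tau_1}$, driven by momentum conservation and Lemma \ref{mah}. The paper organizes cases by the shape of the source monomials (where the two high-momentum factors $z_m^\sigma z_n^{\sigma'}$ of the output originate and what the contraction variable is) rather than by the literal four-term decomposition $\Pi_{\mathrm{bilin}}+\Pi^L+\Pi^U+\Pi_R$; your high-momentum-count ($H$) bookkeeping is a legitimate reorganization of the same argument. Two points, however, are not merely unfilled detail but genuinely off or missing.

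First, the reason for the enlargement $\Pi^L_{K,\mu}\to\Pi^L_{K,2\mu}$ is not the cancellation you describe. You argue that a $\Pi_R$ factor with weight in $[\mu K^3,2\mu K^3)$ can still produce a bilinear output because ``a single low-variable contraction can remove up to $2\cdot 4K^3$ of weight, bringing the combined sum below $\mu' K^3$''; this does not hold numerically (two sources with total weight of order $3\mu K^3$ do not drop below $\mu' K^3<\mu K^3$ after removing $8K^3$). The actual mechanism (paper's case 3) is in the opposite direction: momentum conservation applied to the \emph{second} source forces the contraction index to satisfy $\sigma_1 j=\pi(k^{(2)},\bar\alpha^{(2)},\bar\beta^{(2)})$, whence $|j|+\sum|l|(\bar\alpha^{(1)}_l+\bar\beta^{(1)}_l)\le CK+\mu' K^3<\mu K^3$; this shows the \emph{first} source is genuinely $(K,\lambda,\mu,\tau_1)$-bilinear, and then since $|j|<\mu K^3$ the second source has weight $|j|+\sum|l|(\bar\alpha^{(2)}_l+\bar\beta^{(2)}_l)<\mu K^3+\mu' K^3<2\mu K^3$, i.e.\ it lies in $\Pi^L_{K,2\mu}$. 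The $2\mu$ is about the maximal magnitude of the contraction index $j$ carried by the low-momentum factor, not about weight cancellation.

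Second, the most important vanishing case is the one where each of $f^{(1)},f^{(2)}$ supplies exactly one of the two external high-momentum variables and they are contracted through $\{\cdot,\cdot\}^{I,\theta}$ or $\{\cdot,\cdot\}^L$ (paper's case 4; in your bookkeeping this is $\{\Pi_R f^{(1)},\Pi_R f^{(2)}\}^{I,\theta}$ or $\{\cdot,\cdot\}^L$ with both factors having $H=1$). You lump this into a generic ``use momentum conservation together with Lemma \ref{mah}'' but the argument here is different in kind: momentum conservation on \emph{each} source monomial gives $|\sigma m|=|\pi(k^{(1)},\bar\alpha^{(1)},\bar\beta^{(1)})|\le CK+\mu' K^3$ (and similarly for $n$), which contradicts $\min(|m|,|n|)\ge\lambda'K^\tau$. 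No cut-rigidity from Lemma \ref{mah} is invoked here; it is a simple size obstruction. Spelling this out is necessary, since otherwise the $H$-count alone (output $H=2$) does not rule out this pairing.

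Finally, a note on the double-counted overlap $\{\Pi^U_K f^{(1)},\Pi^U_K f^{(2)}\}$ in terms 4 and 7 would be welcome, though the paper itself glosses over it; it should be handled either by replacing term 7 with $\{(\mathrm{Id}-\Pi^U_K)f^{(1)},\Pi^U_K f^{(2)}\}$ or by observing that it is harmless for the bounds that follow.
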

\begin{proof}
We 
perform a case analysis: we replace each $f^{(i)}$ with  a single
monomial to show which terms may contribute  non trivially to the
projection $\Pi_{N, \theta',\mu',\t1}\{f^{(1)},f^{(2)}   \}$.

 Consider the expression
  $$ \Pi_{N, \theta',\mu',\t1}\{e^{i \langle k^{(1)},\teta\rangle}I^{l^{(1)}} z^{\alpha^{(1)}}\bar z^{\beta^{(1)}},e^{i \langle k^{(2)},\teta\rangle }I^{l^{(2)}} z^{\alpha^{(2)}}\bar z^{\beta^{(2)}}\}.$$
 If one or both of the $|k^{(i)}|>N$ then one or both monomials are of high frequency and we obtain the last term in the second and third line of \eqref{split}.

 Suppose now that  $|k^{(1)}|,|k^{(2)}|<N$ we wish to understand under which conditions on the $\alpha^{(i)},\beta^{(i)}$ this expression is not zero. By direct inspection, one of the following situations (apart from a trivial permutation of the indexes $1,2$) must hold:
  \begin{enumerate}
  \item  one has $z^{\alpha^{(1)}}\bar z^{\beta^{(1)}}= z^{\bar\alpha^{(1)}}\bar z^{\bar\beta^{(1)}} z_m^{\sigma}z_j^{\sigma_1}$ and $ z^{\alpha^{(2)}}\bar z^{\beta^{(2)}}= z^{\bar\alpha^{(2)}}\bar z^{\bar\beta^{(2)}}z_n^{\sigma'} z_j^{-\sigma_1}$,
   where    $|m|,|n|\geq  \theta' N^\td$ have a  cut for some $\ell$ with parameters $(N,\theta',\mu',\tau)$  and
  $z^{\bar\alpha^{(1)}}\bar z^{\bar\beta^{(1)}}z^{\bar\alpha^{(2)}}\bar z^{\bar\beta^{(2)}}$ is of $(N,\mu')$--low momentum.
  The derivative in the Poisson bracket is on $w_j$;

\item  one has $z^{\alpha^{(1)}}\bar z^{\beta^{(1)}}= z^{\bar\alpha^{(1)}}\bar z^{\bar\beta^{(1)}} z_m^{\sigma}z_n^{\sigma'}$ and
  $ z^{\alpha^{(2)}}\bar z^{\beta^{(2)}}= z^{\bar\alpha^{(2)}}\bar z^{\bar\beta^{(2)}} $, where    $|m|,|n|\geq  \theta' N^\td$ have a cut for some $\ell$ with parameters $(N,\theta',\mu',\tau)$and
  $z^{\bar\alpha^{(1)}}\bar z^{\bar\beta^{(1)}}z^{\bar\alpha^{(2)}}\bar z^{\bar\beta^{(2)}}$ is of $(N,\mu')$--low momentum. The derivative in the Poisson bracket is on $I,\teta$;

  \item one has $z^{\alpha^{(1)}}\bar z^{\beta^{(1)}}= z^{\bar\alpha^{(1)}}\bar z^{\bar\beta^{(1)}} z_m^{\sigma}z_n^{\sigma'}z_j^{\sigma_1}$ and
  $ z^{\alpha^{(2)}}\bar z^{\beta^{(2)}}= z^{\bar\alpha^{(2)}}\bar z^{\bar\beta^{(2)}} z_j^{-\sigma_1}$ where
    $|m|,|n|\geq  \theta' N^\td$ have a cut for some $\ell$ with parameters $(N,\theta',\mu',\tau)$ and
  $z^{\bar\alpha^{(1)}}\bar z^{\bar\beta^{(1)}}z^{\bar\alpha^{(2)}}\bar z^{\bar\beta^{(2)}}$ is of $(N,\mu')$--low momentum. The derivative in the Poisson bracket is on $w_j$;

  \item one has $z^{\alpha^{(1)}}\bar z^{\beta^{(1)}}= z^{\bar\alpha^{(1)}}\bar z^{\bar\beta^{(1)}} z_m^{\sigma}$ and
  $ z^{\alpha^{(2)}}\bar z^{\beta^{(2)}}= z^{\bar\alpha^{(2)}}\bar z^{\bar\beta^{(2)}} z_n^{\sigma'}$ where    $|m|,|n|\geq  \theta' N^\td$ have a cut for some $\ell$ with parameters $(N,\theta',\mu',\tau)$ and
  $z^{\bar\alpha^{(1)}}\bar z^{\bar\beta^{(1)}}z^{\bar\alpha^{(2)}}\bar z^{\bar\beta^{(2)}}$ is of $(N,\mu')$--low momentum. The derivative in the Poisson bracket is on $I,\teta$.
  \end{enumerate}

Case 1. We apply momentum conservation to both monomials and
obtain $$\sigma_1j=-\sigma
m-\pi(k^{(1)},\bar\alpha^{(1)},\bar\beta^{(1)})= \sigma'
n+\pi(k^{(2)},\bar\alpha^{(2)},\bar\beta^{(2)}).$$  Recall that
$$ \sum_{l\in \Z_1^d}|l|(\bar\alpha^{(1)}_l+\bar\beta^{(1)}_l+\bar\alpha^{(2)}_l+\bar\beta^{(2)_l})\leq \mu' N^3\;\longrightarrow \sum_{l\in \Z_1^d}|l|(\bar\alpha^{(i)}_l+\bar\beta^{(i)}_l)\leq \mu' N^{\td} $$
and by hypothesis $|k^{(i)}|\leq N$, this implies that $|j|>
 \theta' N^\td- \mu' N^3-CN>  \theta N^\td$ for $N> {K}'$ respecting
\eqref{pois1} (recall that $C$ is a constant so that $|\pi(k)|\leq C|k|$).
Hence $\min(|m|,|n|,|j|)> \theta N^\td$.
  By momentum conservation $|\sigma m+\sigma_1 j|, |-\sigma_1 j+\sigma' n|\leq CN +\mu' N^3\leq 5 N^3$; by hypothesis $n,m$ have a cut  $\ell$ with parameters $(N,\theta',\mu',\tau)$.
  By Lemma \ref{mah} also $j\frec{N}[w_i;q_i]$ has a  cut $\ell$ with  parameters $(N,\theta,\mu,\tau)$.
 Then $e^{i (k^{(i)},\teta)} z^{\alpha^{(i)}}\bar z^{\beta^{(i)}}$ are by definition $(N, \tet1,\mu,\t1)$ bilinear. The derivative in the Poisson bracket is on $j$ which is a high momentum variable.

  As $m,n$ run over all possible vectors in $\Z^d_1$ with $|m|,|n|\geq  \theta' N$, we obtain the first term in formula \eqref{split}.

Case 2.  Following the same argument  $e^{i \langle k^{(1)},\teta\rangle }
z^{\alpha^{(1)}}\bar z^{\beta^{(1)}}$ is  $(N, \theta',\mu',\t1)$
bilinear and $e^{i \langle k^{(2)},\teta\rangle} z^{\alpha^{(2)}}\bar
z^{\beta^{(2)}}$ is $(N,\mu')$ low momentum. We obtain the second
contribution in formula \eqref{split}.

  Case 3.  We apply momentum conservation to the second monomial and obtain $-\sigma_1j=-\pi(k^{(2)},\bar\alpha^{(2)},\bar\beta^{(2)})$.
  This implies that $$|j|+ \sum_{l\in \Z^d_1} |l|(\bar\alpha^{(1)}_l+\bar\beta^{(1)}_l)\leq |\pi(k^{(2)},\bar\alpha^{(2)},\bar\beta^{(2)}|+ \sum_{l\in \Z^d_1} |l|(\bar\alpha^{(1)}_l+\bar\beta^{(1)}_l)\leq $$
  $$ CN + \sum_{ l\in \Z^d_1} |l|(\bar\alpha^{(1)}_l+\bar\beta^{(1)}_l+\bar\alpha^{(2)}_l+\bar\beta^{(2)}_l )\leq \mu' N^3+ CN\leq \mu N^3 $$ if $N> {K}'$ with $ {K}'$ satisfying \eqref{pois1}.
  Then $e^{i \langle k^{(1)},\teta\rangle} z^{\alpha^{(1)}}\bar z^{\beta^{(1)}}$ is,
  by definition, $(N, \tet1,\mu,\t1)$ bilinear and $e^{i \langle k^{(2)},\teta\rangle} z^{\alpha^{(2)}}\bar z^{\beta^{(2)}}$ is
  $(N,2\mu)$ low momentum. The derivative in the Poisson bracket is on $j$ which is a low momentum variable. We obtain the third  contribution in formula \eqref{split}.

Case 4. We apply momentum conservation to both monomials, we get
$$\min(|\sigma m|,|\sigma' n|)\leq \max_{i=1,2}( |-\pi(k^{(i)},\bar\alpha^{(i)}, \bar\beta^{(i)})|\leq CN+\mu' N^3,$$ which is in contradiction to the hypothesis $|m|,|n|\geq  \theta' N^\td$. Hence case 4. does not give any contribution.

 The third line in formula \eqref{split} is dealt just as the second line by exchanging the indexes $1,2$.
\end{proof}


In order to show that $\{f^{(1)},f^{(2)}\}$ is quasi--T\"oplitz, for
all $N> {K}'$ and $\t1$ we have to provide a decomposition
$$\Pi_{N, \theta',\mu',\t1} \{f^{(1)},f^{(2)}\}= \mathcal F^{(1,2)}+ N^{-4d \t1} \bar f^{(1,2)}$$  so that  $\mathcal F^{(1,2)}\in \mathbb F$ and
\begin{equation}\label{stizza}
\|X_{\mathcal F^{(1,2)}}\|_{r',s'}, \|X_{\bar f^{(1,2)}}\|_{r',s'} <
\del^{-1}C\|X_{f^{(1)}}\|_{r,s}^T\|X_{f^{(1)}}\|_{r,s}^T.
\end{equation}
for some constant $C$.

Using Remark \ref{osserv}, we substitute in  formula \eqref{split} $
\Pi_{N, \theta',\mu',\t1}f^{(i)}= \mathcal F^{(i)}+
N^{-4d\t1}\bar f^{(i)}$, with $\mathcal F^{(i)}\in \mathbb
F$.
\begin{Lemma}
Consider the function $$\mathcal F^{(1,2)}=
\Pi_{N, \theta',\mu',\t1}\left(  \{ \mathcal F^{(1)},\mathcal
F^{(2)}\}^H+ \{\mathcal F^{(1)},\Pi^L_{N,2\mu}
f^{(2)}\}^{(I,\teta)+L}+\{\Pi^L_{N,2\mu} f^{(1)},\mathcal
F^{(2)}\}^{(I,\teta)+L} \right)$$ where we have denoted
$\{\cdot,\cdot\}^{(I,\teta) +L}=
\{\cdot,\cdot\}^{(I,\teta)}+\{\cdot,\cdot\}^{L}$.   (i) One has
$\mathcal F^{(1,2)}\in \mathbb F$. (ii) Setting $\bar
f^{(1,2)}= N^{4d\t1}(\Pi_{N, \theta',\mu',\t1}
\{f^{(1)},f^{(2)}\}-\mathcal F^{(1,2)})$ one has that the bounds
\eqref{stizza} hold.
\end{Lemma}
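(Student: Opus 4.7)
The strategy is to use the splitting formula \eqref{split} of the previous lemma and substitute $\Pi_{K,\lambda,\mu,\tau_1} f^{(i)} = \mathcal F^{(i)} + K^{-4d\tau_1} \bar f^{(i)}$, treating the three terms defining $\mathcal F^{(1,2)}$ as the ``leading'' contributions and absorbing everything else into $\bar f^{(1,2)} := K^{4d\tau_1}(\Pi_{K,\lambda',\mu',\tau_1}\{f^{(1)},f^{(2)}\} - \mathcal F^{(1,2)})$. Thus the residue consists of the two ultraviolet brackets $\{\Pi^U_K f^{(1)}, f^{(2)}\}$, $\{f^{(1)}, \Pi^U_K f^{(2)}\}$ together with the mixed brackets involving at least one $\bar f^{(i)}$ factor, each of which carries a compensating $K^{-4d\tau_1}$ or $K^{-8d\tau_1}$ prefactor.

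For part (i), I decompose each input as $\mathcal F^{(i)} = \sum_{A^{(i)}} \mathcal F^{(i)}(A^{(i)})$ and verify the T\"oplitz structure of each of the three defining terms of $\mathcal F^{(1,2)}$. For the principal term $\{\mathcal F^{(1)}, \mathcal F^{(2)}\}^H$, after projection onto $(K,\lambda',\mu',\tau_1)$-bilinear monomials the outer variables $z_m^\sigma z_n^{\sigma'}$ lie in a common affine subspace $A = [v_i;p_i]_\ell$. Momentum conservation gives $|\sigma m + \sigma_1 j|,|-\sigma_1 j+\sigma' n| \leq 5K^3$, so Lemma \ref{mah}(ii), invoked with parameters $(\lambda',\mu')$ against $(\lambda,\mu)$, forces the contracted index $j$ to share the linear span $\langle v_i\rangle_\ell$, whence $j\in A$. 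The resulting coefficient is a convolution along the tangent space $V$ of $A$: setting $t = \sigma m + \sigma_1 j$, the sum
\[
\sum_{j\in A}^{*} g^{(1)}(\sigma m+\sigma_1 j, A)\, g^{(2)}(-\sigma_1 j+\sigma' n, A) = \sum_t g^{(1)}(t,A)\,g^{(2)}((\sigma m+\sigma' n) - t,A)
\]
depends only on $(\sigma m+\sigma' n, A)$, exactly the required T\"oplitz form. For the two mixed brackets $\{\mathcal F^{(i)},\Pi^L_{K,2\mu} f^{(j)}\}^{(I,\theta)+L}$, only one factor carries the outer bilinear variables; since $(I,\theta)$- and low-momentum derivatives do not disturb the high-momentum bilinear structure, T\"oplitz-ness of $\mathcal F^{(i)}$ is inherited verbatim by the product's coefficient of $z_m^\sigma z_n^{\sigma'}$.

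For part (ii), the mixed-bracket residuals carry a $K^{-4d\tau_1}$ or $K^{-8d\tau_1}$ prefactor; multiplication by $K^{4d\tau_1}$ in the definition of $\bar f^{(1,2)}$ cancels it and leaves ordinary Poisson brackets of functions bounded in the original norm by $\|X_{f^{(i)}}\|^T_{r,s}$ (by definition of the quasi-T\"oplitz norm). Lemma \ref{cauchy}(ii) then yields $\delta^{-1}C\|X_{f^{(1)}}\|^T_{r,s}\|X_{f^{(2)}}\|^T_{r,s}$. The two ultraviolet terms exploit only that $\Pi^U_K f^{(i)}$ has Fourier support $|k|\geq K$, so the exponential weight in the norm \eqref{2.3} gives $\|X_{\Pi^U_K f^{(i)}}\|_{r,s'} \leq e^{-(s-s')K}\|X_{f^{(i)}}\|_{r,s}$; after multiplication by $K^{4d\tau_1} \leq K^\tau$, condition \eqref{pois1} ($e^{-(s-s')K}K^\tau < 1$) absorbs the factor and produces a bound of the same form. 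The norm of $\mathcal F^{(1,2)}$ itself is estimated identically by applying Lemma \ref{cauchy}(ii) to each of its three defining Poisson brackets.

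The main obstacle is the geometric verification in part (i): proving that the contracted index $j$ must lie on the same affine subspace $A$ as $m,n$ and that the subsequent sum collapses to a convolution depending only on $(\sigma m + \sigma' n, A)$. This hinges on the quantitative parameter bounds $2K \leq \sqrt{\mu-\mu'}\,K^{3/2}$ and $2\mu' K^4 < (\lambda'-\lambda)K^{4d\tau_0-1}$, which are precisely what Lemma \ref{mah}(ii) requires in order to propagate the $(\ell,\tau_1)$ cut from $m$ to $j$ and $n$, and which are guaranteed by the hypotheses \eqref{pois1} of Proposition \ref{main}.
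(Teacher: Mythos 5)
Your proposal is correct and follows essentially the same route as the paper: substitute $\Pi_{K,\lambda,\mu,\tau_1}f^{(i)}=\mathcal F^{(i)}+K^{-4d\tau_1}\bar f^{(i)}$ into the splitting formula, identify the three leading brackets as $\mathcal F^{(1,2)}$, check the T\"oplitz structure of $\{\mathcal F^{(1)},\mathcal F^{(2)}\}^H$ via Lemma~\ref{mah}(ii) and a convolution collapse, handle the mixed $(I,\theta)$/low-momentum brackets as trivially T\"oplitz, and bound the remainder by Cauchy estimates plus the smoothing estimate $e^{-(s-s')K}<K^{-\tau}$ from \eqref{pois1} for the ultraviolet pieces. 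One small imprecision: you write that the contracted index $j$ lies in $A$, but Lemma~\ref{mah}(ii) only gives that $j$ lies in an affine subspace $A'$ with the \emph{same linear span} $\langle v_i\rangle_\ell$; $A'$ is in general a different translate, and the correct statement (as in the paper) is that $A'$ is uniquely determined by $A$, $\sigma,\sigma_1$ and $h=\sigma m+\sigma_1 j$, so the coefficient of $z_m^\sigma z_n^{\sigma'}$ after the change of variables $h=\sigma m+\sigma_1 j$ still depends only on $(\sigma m+\sigma' n, A)$, which is what the T\"oplitz condition requires.
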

 \begin{proof}
  In order to prove the first statement it is useful to write
   $$\mathcal F^{(i)}= \sum_{A=[v_i;p_i]_\ell\in \mathcal H_N\atop |p_\ell|<\mu N^{\t1}}\sum_{\sigma,\sigma'=\pm 1}\sum_{m,n }^{(N,\theta',\mu',\t1,A)}[\mathcal F^{(i)}]^{\sigma,\sigma'}(I,\teta,w^L;\sigma m+\sigma' n,[v_i;p_i]_\ell) z_m^{\sigma}z_n^{\sigma'}$$
  where $\sum\limits^{(N,\theta',\mu',\t1,A)}$ is the sum over those $n,m$ which respect \eqref{zerouno} and have the $\ell$ cut  at $A= [v_i;p_i]_\ell$ with the parameters $ \theta',\mu',\tau$.
  For compactness of notation we will omit the dependence on $(I,\teta,w^L)$.

 The fact that $\{\mathcal F^{(1)},\Pi^L_{N,2\mu} f^{(2)}\}^{I,\teta+L} \in \mathbb F$ is obvious. Indeed the coefficient of $z_m^\sigma z_n^{\sigma'}$ is
 $$ \{\mathcal F^{(1)}(\sigma m+\sigma' n,[v_i;p_i]_\ell),\Pi^L_{N,2\mu} f^{(2)}\}^{I,\teta+L},$$ the same for $\{\mathcal F^{(2)},\Pi^L_{N,2\mu} f^{(1)}\}^{I,\teta+L}.$

Suppose now that $n,m$  respect \eqref{zerouno} and have the
$\ell$ cut  $[v_i;p_i]_\ell$ with the parameters
$ \theta',\mu',\tau$. By the rules of Poisson brackets the coefficient of
$z_m^\sigma z_n^{\sigma'}$ in the expression $ \{ \mathcal
F^{(1)},\mathcal F^{(2)}\}^H$ is
   \begin{equation}\label{ahi}\sum_{r\in \Z^d_1,\sigma_1=\pm 1\atop{ |r|\geq  \theta N^\td \atop {|\sigma m+\sigma_1r|\leq \mu N^3\atop |-\sigma_1r+\sigma'n|\leq \mu N^3}}}\!\!\!\!{-\sigma_1} [{  \mathcal F}^{(1)}]^{\sigma,\sigma_1}(\sigma m+\sigma_1r,[v_i;p_i]_\ell)[{ \mathcal F}^{(2)}]^{-\sigma_1,\sigma'}(-\sigma_1r+\sigma'n;[w_i;q_i]_\ell); \end{equation}
Since $|\sigma m+ \sigma_1 r|, |\sigma' n-\sigma_1 r|\leq \mu N^3$
and $|m|,|n|> \theta' N^\td$ we have that the condition
$|r|> \theta N^\td$ is automatically fulfilled. By Lemma \ref{mah}
$r,n,m$ all have a $\ell$ cut with parameters
$( \tet1,\mu,\tau)$. We set $m\frec{N}[v_i;p_i]$,
$n\frec{N}[v'_i;p'_i]$, $r\frec{N}[w_i;q_i]$.  Again by Lemma \ref{mah} $\langle v_i\rangle_\ell=\langle v'_i\rangle_\ell=\langle w_i\rangle_\ell$, moreover $[w_i;q_i]_\ell$ is completely fixed by $[v_i;p_i]_\ell$, $\sigma,\sigma_1$ and by $\sigma m+ \sigma_1 r:= h$.  We may  suppose (the other cases are done in the same way) that
  $$(p_1,\cdots,p_\ell,v_1,\cdots,v_\ell)\preceq(q_1,\cdots,q_\ell,w_1,\cdots,w_\ell)\preceq (p'_1,\cdots,p'_\ell,v'_1,\cdots,v'_\ell),$$
  note that also this order relation depends only on $\s,\s',\s_1$, $[v_i;p_i]_\ell$, $\s m+\s' n$ and $\sigma m+ \sigma_1 r=h$.
  Then we may change variables in the sum over $r$ in \eqref{ahi}:
 $$\sum_{\sigma_1=\pm 1}\sum_{h\,: |h| <\mu N^3\atop |\sigma m+\sigma' n -h|\leq \mu N^3 }-{\sigma_1} [{  \mathcal F}^{(1)}]^{\sigma,\sigma_1}(h,[v_i;p_i]_\ell)[{ \mathcal F}^{(2)}]^{-\sigma_1,\sigma'}(\sigma m+\sigma' n -h;[w_i;q_i]_\ell),$$ this expression only depends on $[v_i;p_i]_\ell$. The estimate \eqref{stizza} for $\mathcal F^{(1,2)}$ follows by  Cauchy  estimates since
 $$ \|X_{\mathcal F^{(1,2)}}\|_{r',s'}\leq  \|X_{\{\mathcal F^{(1)},\mathcal F^{(2)}\}}\|_{r',s'}+ \|X_{\{\mathcal F^{(1)},f^{(2)}\}}\|_{r',s'}+ \|X_{\{\mathcal F^{(2)},f^{(1)}\}}\|_{r',s'}.$$

We now compute:
$$\bar f= \Pi_{N, \theta',\mu',\t1}\left( \{ \Pi _{N, \tet1,\mu,\t1} f^{(1)},\bar f^{(2)}\}^H+\{\bar f^{(1)} , \mathcal F^{(2)}\}^H\right.$$ $$ +
\{\bar f^{(1)},\Pi^L_{N,\mu} f^{(2)}\}^{I,\teta}+\{\bar
f^{(1)},\Pi^L_{N,\mu} f^{(2)}\}^L+ N^{4d\t1}\{\Pi^U_N
f^{(1)},f^{(2)}\}$$ $$\left. \{\Pi^L_{N,\mu} f^{(1)},\bar
f^{(2)}\}^{I,\teta}+\{\Pi^L_{N,\mu} f^{(1)},\bar
f^{(2)}\}^L+N^{4d\t1}\{f^{(1)},\Pi^U_N f^{(2)}\}\right).  $$

 Since $e^{-N(s-s')}<N^{-\td}$, one has
 $$\|  X_{ \{f^{(1)},\Pi^U_N f^{(2)}\}}\|_{r',s'} \leq  N^{-\td} 2^{2d+1}\delta^{-1}\|X_{ f^{(1)}}\|_{r,s}\|X_{f^{(2)}}\|_{r,s}, $$ by the Cauchy and smoothing estimates. The estimate \eqref{stizza} follows.
 \end{proof}
 \begin{proof}{\it  (\textbf{Proposition \ref{main}})}
 Proposition \ref{main}(i) follows from the previous Lemma.

(ii)   Given $f^{(i)}$, $ i=1,\cdots,J$ as in item (i), and applying
repeatedly \eqref{pois1},  the nested Poisson bracket
$$\{f^{(1)},\{f^{(2)},\cdots,\{f^{(J-1)},f^{(J)}\}\cdots\}$$ is quasi-T\"oplitz in $\D(r_+,s_+)$  with parameters $( {K}_+, \theta_+,\mu_+)$ if
\begin{equation}\label{enne}
 \frac{1}{N^2}\leq \frac{(\mu-\mu') }{J}, \quad \frac{2\mu' }{N^{4d \tau_0-4}}< \frac{ \theta'- \theta}{J}\,,\quad  e^{-\frac{s-s'}{J}N}(N)^\td<1
\end{equation}
 for all $N> {K}_+$
\vskip5pt For given $N$ we bound all the terms in $e^{\{F,\cdot\}}G$
containing $J>(\ln N)^2$ Poisson brackets by $N^{-\td}$ by using
the standard bound:
$$\sum_{k>J}\frac{\|X_{ ad(f^{(1)})^k f_2}\|_{r',s'} }{k!}\leq (2e\del^{-1} \|X_{f^{(1)}}\|_{r,s})^{J+1}\|X_{f^{(2)}}\|_{r,s}\leq $$ $$CN^{-\td} \|X_{f^{(1)}}\|_{r,s}\|X_{f^{(2)}}\|_{r,s}$$ provided that $ 2e\del^{-1} \|X_{f^{(1)}}\|_{r,s}<\frac12$.
 We then apply \eqref{enne} with $J= (\ln N)^2$, we get the
restriction \eqref{boh}. So applying item (i) repeatedly we get for all $k<J$:
$$ \frac{1}{k!}\|X_{ ad(f^{(1)})^k f_2}\|^T_{r',s'}\leq (Ce\del^{-1} \|X_{f^{(1)}}\|^T_{r,s})^{k}\|X_{f^{(2)}}\|_{r,s},$$ the result follows.
 \end{proof}

\medskip
{\it Acknowledgments:} \ We wish to thank Massimiliano Berti,  Luca Biasco and Claudio Procesi for their careful  reading and many helpful suggestions.  Finally we wish to thank the anonymous referees whose remarks helped us to make the paper more readable.



\end{document}